\newtheorem{thm}{Theorem}[section]
\newtheorem{lemma}[thm]{Lemma}
\newtheorem{prop}[thm]{Proposition}
\newtheorem{cor}[thm]{Corollary}
\theoremstyle{definition}
\newtheorem{defn}[thm]{Definition}
\theoremstyle{remark}
\newtheorem{rem}[thm]{Remark}
\numberwithin{equation}{section}
\DeclareMathOperator{\Ann}{Ann}
\DeclareMathOperator{\add}{add}
\DeclareMathOperator{\Ker}{Ker}
\DeclareMathOperator{\Img}{Im}
\DeclareMathOperator{\Mat}{Mat}
\DeclareMathOperator{\Hom}{Hom}
\DeclareMathOperator{\End}{End}
\DeclareMathOperator{\Sat}{Sat}
\newcommand{\sm}{\sigma[M]}
\newcommand{\ess}{\leq^\text{ess}}
\begin{document}
	
	\title{Reduced rank in $\sm$}
	
	%    Information for first author
	\author{John A. Beachy}
	%    Address of record for the research reported here
	\address{Department of Mathematical Sciences, Northern Illinois University,
		DeKalb 60115, Illinois, US}
	%    Current address
	%\curraddr{Department of Mathematics and Statistics,
		%Case Western Reserve University, Cleveland, Ohio 43403}
	\email{jbeachy@niu.edu}
	%    \thanks will become a 1st page footnote.
	%\thanks{The first author was supported in part by NSF Grant \#000000.}
	
	%    Information for second author
	\author{Mauricio Medina-B\'arcenas}
	%%% +++
	%%%\address{Facultad de Ciencias, Universidad Nacional Aut\'onoma de M\'exico, 
		%%%Circuito Exterior,
		%%%Ciudad Universitaria 04510, Ciudad de México, M\'exico}
	%%%\email{mmedina@ciencias.unam.mx}
	\address{Departamento de Matem\'{a}ticas, 
		Universidad Autónoma Metropolitana-Iztapalapa,
		Sn. Rafael Atlixco 186, Col. Vicentina 09340
		Ciudad de México, M\'{e}xico}
	\email{mmedina@xanum.uam.mx}
	\thanks{The second author was supported by the grant ``CONACYT-Estancias Posdoctorales 2do A\~no 2020 - 1''.}
	
	%    General info
	\subjclass[2010]{Primary 16D90, 16P50; Secondary 16P70, 16S50}
	
	%\today
	
	\date{}
	
	%\dedicatory{This paper is dedicated to our advisors.}
	
	\keywords{Finite reduced rank, Goldie module, Quotient category, Hereditary torsion theory}
	
	\begin{abstract}
		Using the concept of prime submodule 
		introduced by Raggi et.al.\ 
		we extend the notion of reduced rank to the module-theoretic context of $\sm$. 
		We study the quotient category of $\sm$ 
		modulo the hereditary torsion theory 
		cogenerated by the $M$-injective hull of $M$,
		when $M$ is a semiprime Goldie module. 
		We prove that this quotient category is spectral. 
		We then consider the hereditary torsion theory in $\sm$ 
		cogenerated by the $M$-injective hull of $M/\mathfrak{L}(M)$, 
		where $\mathfrak{L}(M)$ is the prime radical of $M$, 
		and we determine when the module of quotients of $M$, 
		with respect to this torsion theory, 
		has finite length in the quotient category. 
		Finally, we give conditions on a module $M$ with endomorphism ring $S$ 
		under which $S$ is an order in an Artinian ring, 
		extending Small's Theorem.
	\end{abstract}
	
	\maketitle
	
	\section{Introduction}
	
	The notion of reduced rank 
	was introduced by Goldie in \cite{goldie1964torsion}. 
	Over a semiprime left Goldie ring $R$ with classical ring of quotients $Q$, 
	the reduced rank of an $R$-module $M$ is defined 
	as the dimension of the semisimple $Q$-module $Q\otimes_R M$. 
	This definition was extended to finitely generated modules 
	over any left Noetherian ring,
	using the fact that the prime radical of a left Noetherian ring is nilpotent. 
	The reduced rank of an $R$-module $M$ over a left Noetherian ring $R$ 
	has been shown to be a very useful notion 
	(see, for example, \cite{chatters1979reduced}, 
	\cite[Chapter~12]{lamlectures}, and 
	\cite[Chapter~4]{mcconnell2001noncommutative}).  
	In particular, 
	it has been used to characterize left orders in left Artinian rings. 
	
	The first author extended the notion of reduced rank 
	to arbitrary rings in \cite{beachy1982rings}. 
	Let $\gamma$ be the hereditary torsion theory 
	cogenerated by the injective hull of $R/N$, 
	where $N$ is the prime radical of the ring $R$. 
	Then the ring $R$ is said to have finite reduced rank 
	if the quotient ring $\mathcal{Q}_\gamma(R)$ 
	has finite length in the quotient category $R$-Mod/$\gamma$. 
	This definition agrees with that given by Goldie 
	when $R$ is a left Noetherian ring, 
	and it has been shown 
	(see \cite{lenagan} and \cite{beachy1982rings}) 
	that a ring with Krull dimension has finite reduced rank
	in this more general sense. 
	If the ring $R$ has finite reduced rank, 
	then $R/N$ is a left Goldie ring
	%where $N$ is the prime radical of $R$ 
	\cite[Theorem 1]{beachy1982rings}. 
	The general notion of reduced rank can be used 
	to characterize left orders in left Artinian rings,
	extending Small's Theorem \cite[Theorem 4]{beachy1982rings} 
	(for Small's Theorem itself see \cite[4.1.4]{mcconnell2001noncommutative}). 
	
	In this paper we are interested 
	in a more general approach to the concept of reduced rank. 
	%In \cite{raggiprime}, 
	%it was introduced a notion of prime submodule of a given module. 
	A new notion of a prime submodule of a given module
	was introduced in \cite{raggiprime},
	using the product of submodules defined in \cite{BicanPr}.
	It is then natural to consider the prime radical of a module $M$: 
	the intersection of all prime submodules of $M$,
	which we will denote by $\mathfrak{L}(M)$. 
	%In order to get a more general context, 
	%we will work on the category $\sm$. 
	The appropriate context in which to work appears to be
	the category $\sm$ of all modules subgenerated by $M$.
	We will say that a module $M$ has finite reduced rank in $\sm$ 
	if the module of quotients $\mathcal{Q}_\gamma(M)$ 
	has finite length in the quotient category $\sm/\gamma$, 
	where $\gamma$ is the hereditary torsion theory in $\sm$ 
	cogenerated by the $M$-injective hull of $M/\mathfrak{L}(M)$.
	%where $\mathfrak{L}(M)$ is the prime radical of $M$. 
	As in the case of rings, 
	we show if $M$ has Krull dimension and is a projective generator of $\sm$,
	%that every module $M$ with Krull dimension 
	then $M$ has finite reduced rank in $\sm$. 
	This provides an interesting class of examples of the general theory.
	%%% We also explore when finite reduced rank is preserved by taking submodules, 
	%%% factor modules, or direct sums. 
	%%% We prove that the notion of finite reduced rank 
	%%% is preserved under equivalences between categories of the type $\sm$. 
	%%% This extends the fact that, for a ring, 
	%%% having finite reduced rank is a Morita invariant property. 
	%%% and also we get as a corollary 
	%%% that the corner ring with a right semicentral idempotent 
	%%% of a ring with finite reduced rank inherits the property. 
	%%%
	%%% Given a module $M$ with endomorphism ring $S$, 
	%%% we study conditions on $M$ 
	%%% under which $S$ has finite reduced rank in Mod-$S$. 
	%%% One of our goals is to extend Small's Theorem for $S$. 
	%%% We give a notion of regularity in $S$,
	%%% and we prove that $M$ has finite reduced rank in $\sm$ 
	%%% and satisfies that regularity condition 
	%%% if and only if 
	%%% $S$ is a right order in an right Artinian ring. 
	%%% In fact, this Artinian ring 
	%%% is the endomorphism ring of the module of quotients of $M$.
	
	%%% +++  In this introduction, the definition of reduced rank
	%%%      is given twice, and some results are stated twice.  I
	%%%      think that we should avoid this duplication, so I have
	%%%      shortened the introductory section.
	
	The remainder of this paper is divided into sections as follows. 
	%the first section is this introduction. 
	In Section \ref{httsm}, 
	we present some properties of hereditary torsion theories 
	in the category $\sm$ that are needed in the following sections. 
	
	In Section \ref{qcsgm}, we study the quotient category $\sm/\tau_g$, 
	where $M$ is a semiprime Goldie module 
	and $\tau_g$ is the hereditary torsion theory 
	cogenerated by the $M$-injective hull of $M$ in $\sm$. 
	%%% +++  I think we need to state the hypothesis of the theorem
	Under the hypothesis that $M$ is projective in $\sm$,
	we prove in Theorem \ref{goldiefl}  
	that for a Goldie module $M$ 
	with $M$-injective hull $\widehat{M}$, 
	the quotient category $\sm/\tau_g$ is discrete spectral 
	and equivalent to Mod-$\End_R(\widehat{M})$ and
	that the module of quotients 
	$\mathcal{Q}_{\tau_g}(M)$ is semisimple of finite length. As Corollary, we characterize a semiprime left Goldie ring $R$ in terms of its maximal left rings of quotients $Q_{max}^\ell(R)$ (Corollary \ref{qmax}).  
	%provided that $M$ is projective in $\sm$ (Theorem \ref{goldiefl}). 
	
	In Section \ref{mfrr}, 
	we introduce the main concept of the paper,
	that of reduced rank in $\sm$. 
	%%% +++  We already gave the definition in the introduction.
	%%% Given a module $M$ with prime radical $\mathfrak{L}(M)$, 
	%%% we say that $M$ has finite reduced rank in $\sm$ 
	%%% if the module of quotients $\mathcal{Q}_\gamma(M)$ 
	%%% has finite length in the quotient category $\sm/\gamma$,
	%%% where $\gamma$ is the hereditary torsion theory in $\sm$ 
	%%% cogenerated by the $M$-injective hull of $M/\mathfrak{L}(M)$. 
	We characterize a module $M$ with finite reduced rank in $\sm$ 
	in terms of its lattice of $\gamma$-saturated submodules (Theorem \ref{rrt}), 
	and as a consequence $M/\mathfrak{L}(M)$ is a Goldie module. 
	We determine when the property of having finite reduced rank 
	is inherited by submodules, 
	factor modules, and direct sums 
	(Proposition \ref{rreq} and Proposition \ref{rrs}). 
	We prove that if $M$ has finite reduced rank in $\sm$, 
	then so does every generator of $\sm$ (Proposition \ref{rrpg}). 
	This allows us to prove that 
	having finite reduced rank is preserved 
	under equivalences between categories of the type $\sm$,
	and to show that having finite reduced rank 
	is a Morita invariant property 
	(Proposition \ref{rreq} and Corollary \ref{rrMorita}). 
	Finally, we show that for a semicentral idempotent $e^2=e$ in a ring $R$ 
	with finite reduced rank, 
	the corner ring $eRe$ 
	inherits the property (Corollary \ref{corner}). 
	
	In Section \ref{gST} we generalize Small's Theorem\footnote{and the more general version given by Warfield in \cite{warfield1979bezout}}.
	Given a module $M$ with endomorphism ring $S$ 
	and a fully invariant submodule $N\leq M$, 
	we consider the set $\mathcal{C}(N)$ 
	of those endomorphisms $f\in S$ 
	which induce a monomorphism $\overline{f}:M/N\to M/N$. 
	We prove that for a projective generator $M$ in $\sm$, 
	the module $M$ has finite reduced rank in $\sm$ 
	and $\mathcal{C}(\mathfrak{L}(M))\subseteq\mathcal{C}(0)$ 
	if and only if 
	$T=\End_R(\mathcal{Q}_{\gamma}(M))$ is an Artinian ring 
	and $S$ is a right order in $T$ (Theorem \ref{smallgen}). To finish, we apply the last theorem to corner rings and Rickart modules.
	
	Throughout this paper $R$ will be an associative ring with unity, 
	and all $R$-modules will be unitary left $R$-modules. 
	The category of $R$-modules is denoted as $R$-Mod. 
	Given an $R$-module $M$, 
	the full subcategory of $R$-Mod 
	consisting of all those modules 
	which can be embedded in an $M$-generated module is called $\sm$. 
	As a class of modules, 
	$\sm$ is a hereditary pretorsion class, 
	that is, $\sm$ is closed under submodules, 
	direct sums and homomorphic images. 
	Moreover, $\sm$ has direct products and injective hulls, 
	although these constructions differ from those in $R$-Mod. 
	It can be seen that $\sm$ is a Grothendieck category,
	and so it has  a generator $U$ 
	(which may be not $M$). 
	Given a family $\{N_i\}_I$ of modules in $\sm$, 
	the direct product in $\sm$ of the family $\{N_i\}_I$ 
	can be constructed as the trace $tr^{U}(\prod_IN_i)$ 
	of $U$ in the direct product in $R$-Mod of the family $\{N_i\}_I$. 
	We let $\prod_I^{[M]}N_i$ denote the direct product in $\sm$ 
	of the family $\{N_i\}_I$. 
	Every module in $\sm$ has an injective hull in $\sm$. 
	To show this, 
	let $N$ be any module in $\sm$, 
	and let $E(N)$ be its injective hull in $R$-Mod. 
	Then $tr^M(E(N))\neq 0$ and $N\subseteq tr^M(E(N))$, 
	and it can be seen that $tr^M(E(N))$ is an injective hull in $\sm$ for $N$. 
	We let $E^{[M]}(N)$ denote the injective hull 
	(or $M$-injective hull) of $N$ in $\sm$. 
	Note that $E^{[M]}(N)$ is always an $M$-generated module. 
	%for any module $N$ in $\sm$. 
	For general information on rings, modules and the category $\sm$, 
	the reader is referred to 
	\cite{lamlectures}, \cite{stenstromrings}, and \cite{wisbauerfoundations}.
	
	\section{Hereditary Torsion Theories in $\sigma [M]$}
	\label{httsm}
	
	Following Wisbauer \cite[9.5]{wisbauermodules}, 
	given a hereditary torsion theory $\tau=(\mathcal{T},\mathcal{F})$ in $\sm$, 
	there exists an $M$-injective module $E$ such that 
	\[ \mathcal{T}=\{N\in\sm\mid \Hom_R(N,E)=0\} . \] 
	We claim that 
	\[ \textstyle \mathcal{F}=\{L\in\sm\mid L\hookrightarrow \prod_I^{[M]}E\} , \] 
	where  $\prod_I^{[M]}E$ is the direct product of copies of $E$ 
	in the category $\sm$. 
	To show this, let $N$ in $\mathcal{T}$ and let
	\[ \textstyle L\in\{L\in\sm\mid L\hookrightarrow \prod_I^{[M]}E\} . \] 
	Then there is a monomorphism $\alpha:L\hookrightarrow \prod_I^{[M]}E$ 
	for some index set $I$. 
	Suppose that $f:N\to L$ is a nonzero homomorphism. 
	It follows that there is an index $i\in I$ 
	such that $\pi_i\alpha f\neq 0$, 
	where $\pi_i:\prod_I^{[M]}E\to E$ is the canonical projection. 
	This is a contradiction because $N$ is in $\mathcal{T}$. 
	On the other hand, let $L$ be in $\mathcal{F}$. 
	Then $\Hom_R(L,E)\neq 0$, 
	and we consider the canonical homomorphism 
	$\alpha:L\to \prod_{\Hom_R(L,E)}^{[M]}E$. 
	It follows that $\Ker\alpha=\bigcap_{f\in\Hom_R(L,E)}\Ker f$. 
	Since $E$ is injective in $\sm$, 
	we have $\Hom_R(\Ker\alpha,E)=0$. 
	This implies that $\Ker\alpha\in\mathcal{T}\cap\mathcal{F}=0$,
	and thus $\alpha$ is a monomorphism. 
	
	It follows that a hereditary torsion theory 
	$\tau=(\mathcal{T},\mathcal{F})$ in $\sm$ 
	is cogenerated by an $M$-injective module $E$, that is,
	\begin{equation}\label{11}
		\begin{split}
			\mathcal{T}& =\{N\in\sm\mid \Hom_R(N,E)=0\} \\
			\mathcal{F}& =\{L\in\sm\mid L\hookrightarrow \textstyle{\prod_I^{[M]}E}\;
			\text{for some index set}\;I\}
		\end{split}
	\end{equation}
	
	\begin{lemma}
		Let $M$ be an $R$-module. 
		If $\tau=(\mathcal{T},\mathcal{F})$ 
		is a pair of classes of modules in $\sm$, 
		then $\tau$ is a hereditary torsion theory in $\sm$ 
		if and only if 
		there exists a hereditary torsion theory 
		$(\mathfrak{T},\mathfrak{F})$ in $R$-Mod 
		such that $\mathcal{T}=\mathfrak{T}\cap\sm$ and 
		$\mathcal{F}=\mathfrak{F}\cap\sm$.
	\end{lemma}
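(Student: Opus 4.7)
The strategy is to use the characterization just given of hereditary torsion theories in $\sm$ as those cogenerated by an $M$-injective module, together with the analogous classical description of hereditary torsion theories in $R$-Mod as those cogenerated by an injective $R$-module. In each direction the task reduces to producing an appropriate injective cogenerator in the other category and checking, for $N\in\sm$, that $\Hom_R(N,-)$ vanishes on the old cogenerator if and only if it vanishes on the new one.

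For the ``only if'' direction, suppose $\tau=(\mathcal{T},\mathcal{F})$ is cogenerated in $\sm$ by an $M$-injective module $E$, and let $(\mathfrak{T},\mathfrak{F})$ be the hereditary torsion theory in $R$-Mod cogenerated by the $R$-Mod injective hull $E(E)$ of $E$; this is automatically hereditary since its cogenerator is injective. The key claim is that for every $N\in\sm$, $\Hom_R(N,E)=0$ if and only if $\Hom_R(N,E(E))=0$. One direction is immediate from the inclusion $E\hookrightarrow E(E)$. For the other, I argue that $tr^M(E(E))=E$: because $E$ is $M$-injective it is $M$-generated (being a summand in $\sm$ of some $M$-generated module containing it), so $E\subseteq tr^M(E(E))$; on the other hand $tr^M(E(E))$ is an essential extension of $E$ in $\sm$, which must be trivial since $E$ is $M$-injective. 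Hence the image of any $f\colon N\to E(E)$ with $N\in\sm$ lies in $tr^M(E(E))=E$, so $f$ factors through $E$. Combined with the standard description of the torsion-free class as those modules every nonzero submodule of which admits a nonzero map into the cogenerator, this yields $\mathcal{T}=\mathfrak{T}\cap\sm$ and $\mathcal{F}=\mathfrak{F}\cap\sm$.

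For the ``if'' direction, given $(\mathfrak{T},\mathfrak{F})$ in $R$-Mod cogenerated by an injective $R$-module $E$, I set $E':=tr^M(E)\in\sm$. To verify that $E'$ is $M$-injective I compute its $M$-injective hull as $E^{[M]}(E')=tr^M(E(E'))$ and show it equals $E'$: since the inclusion $E'\hookrightarrow E$ extends, by injectivity of $E$, to an embedding $E(E')\hookrightarrow E$ whose image is a direct summand of $E$, one has $tr^M(E(E'))\subseteq tr^M(E)=E'$, and the reverse inclusion is clear. The same factorization argument as in the previous paragraph shows $\Hom_R(N,E)=0$ if and only if $\Hom_R(N,E')=0$ for $N\in\sm$, so the hereditary torsion theory in $\sm$ cogenerated by $E'$ is exactly $(\mathfrak{T}\cap\sm,\mathfrak{F}\cap\sm)$. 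The recurring technical point, which I expect to be the main obstacle, is the identity $tr^M(E(X))=X$ for $M$-injective $X$; this rests on the fact that an $M$-injective module in $\sm$ admits no proper essential extension inside $\sm$, and once it is in hand the rest is routine manipulation of the descriptions in \eqref{11}.
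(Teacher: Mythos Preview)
Your proposal is correct. The forward direction is essentially the paper's argument: both take the injective hull $Q=E(E)$ in $R$-Mod as the cogenerator of $(\mathfrak{T},\mathfrak{F})$ and use that any homomorphism from a module in $\sm$ into $Q$ has image contained in $E$ (the paper phrases this as ``the largest submodule of $Q$ in $\sm$ is $E$'', which is exactly your identity $tr^M(E(E))=E$). The paper handles the torsion-free class by explicitly tracking the embedding into a product $\prod^{[M]}_I E\leq Q^I$, while you invoke the equivalent description via nonzero maps from nonzero submodules; these are interchangeable.

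The converse direction differs in style. The paper dispatches it in one line: since $\sm$ is a hereditary pretorsion class (closed under submodules, quotients, direct sums), intersecting $\mathfrak{T}$ and $\mathfrak{F}$ with $\sm$ immediately yields a hereditary torsion pair in $\sm$, and no cogenerator needs to be exhibited. Your approach is more constructive: you produce the $M$-injective cogenerator $E'=tr^M(E)$ explicitly and verify that it cogenerates $(\mathfrak{T}\cap\sm,\mathfrak{F}\cap\sm)$. This is longer but carries extra information, namely an explicit description of the cogenerator in $\sm$. Note that your factorization argument in this direction (that maps $N\to E$ with $N\in\sm$ land in $E'$) tacitly uses that $E$ is injective in $R$-Mod, which is what guarantees $tr^M(E)$ is the largest submodule of $E$ lying in $\sm$; you might make this point explicit, since for non-injective $E$ the trace need not capture all $\sm$-submodules.
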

	
	\begin{proof}
		Suppose that $\tau=(\mathcal{T},\mathcal{F})$ 
		is a hereditary torsion theory in $\sm$. 
		Let $E$ be an $M$-injective module $E$ satisfying 
		the conditions of (\ref{11}), 
		and let $Q$ be the injective hull of $E$ in $R$-Mod. 
		Then $Q$ cogenerates a hereditary torsion theory 
		$(\mathfrak{T},\mathfrak{F})$ in $R$-Mod. 
		We claim that $\mathcal{T}=\mathfrak{T}\cap\sm$ and 
		$\mathcal{F}=\mathfrak{F}\cap\sm$. 
		Since $E\leq Q$, 
		it is clear that $\mathfrak{T}\cap\sm\subseteq \mathcal{T}$. 
		Suppose that there is a homomorphism $f:N\to Q$,
		for $N\in\mathcal{T}$. 
		Since $N$ is in $\sm$, 
		the image $f(M)$ is in $\sm$, 
		which implies that $f(M)\subseteq E$, and so $f=0$. 
		Thus $\mathcal{T}\subseteq\mathfrak{T}\cap\sm$. 
		On the other hand, since $\prod_I^{[M]}E\leq Q^I$ for any set $I$, 
		%and the product in $\sm$ is a submodule of the direct product in $R$-Mod
		we have that $\mathcal{F}\subseteq \mathfrak{F}\cap\sm$. 
		Now suppose that $N$ is in $\mathfrak{F}\cap\sm$, 
		so that there is a monomorphism $\alpha:N\to Q^I$. 
		For every index $i\in I$, 
		consider $\pi_i\alpha:N\to Q$, 
		where $\pi_i:Q^I\to Q$ is the canonical projection. 
		Since $N$ is in $\sm$ and the largest submodule of $Q$ in $\sm$ is $E$, 
		we have $\pi_i\alpha(N)\subseteq E$ for all $i\in I$. 
		This implies that $\alpha(N)\subseteq E^I$. 
		Since $\alpha(N)$ is in $\sm$, 
		$\alpha(N)$ must be contained in $\prod_I^{[M]}E$, 
		and thus $N$ is in $\mathcal{F}$. 
		
		Conversely, let $(\mathfrak{T},\mathfrak{F})$ 
		be a hereditary torsion theory in $R$-Mod. 
		Since $\sm$ is a hereditary pretorsion class,
		it follows easily that $(\mathfrak{T}\cap\sm,\mathfrak{F}\cap\sm)$ 
		is a hereditary torsion theory in $\sm$.	
	\end{proof}
	
	Let $\tau$ be a hereditary torsion theory in $\sm$, 
	and let $N$ be a module in $\sm$. 
	Recall that the set of \emph{$\tau$-saturated submodules} of $N$ 
	is defined as 
	\[\Sat_\tau(N)=\{L\leq N\mid N/L\;\text{is $\tau$-torsionfree}\}.\]

	\begin{prop}\label{annihilatorsat}
		Let $\tau$ be a hereditary torsion theory in $\sm$ 
		cogenerated by an $M$-injective module $E$ 
		and let $N\in\sm$. 
		Then 
		\begin{enumerate}
			\item $N$ is $\tau$-torsionfree if and only if 
			$\Ker f\in\Sat_\tau(N)$ for all $f\in\End_R(N)$, and
			\item $\Sat_\tau(N)
			=\{\bigcap_{f\in X}\Ker f\mid X\subseteq\Hom_R(N,E)\}$.
		\end{enumerate}
	\end{prop}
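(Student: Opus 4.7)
The plan is to treat the two parts separately, noting that (1) is essentially a bookkeeping statement while (2) is the substantive content, and that (2) is proved by playing the two descriptions of $\mathcal{F}$ recorded in (\ref{11}) against each other.

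For (1), I would dispatch both directions using only the closure of the torsionfree class under subobjects. For the forward direction, given $N\in\mathcal{F}$ and $f\in\End_R(N)$, observe that $N/\Ker f\cong \Img f\leq N$; since $\mathcal{F}$ is closed under submodules in any torsion theory, $N/\Ker f\in\mathcal{F}$, so $\Ker f\in\Sat_\tau(N)$. For the reverse direction, I would simply apply the hypothesis to $f=1_N$, obtaining $0=\Ker 1_N\in\Sat_\tau(N)$, which by definition means $N=N/0$ is $\tau$-torsionfree.

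For (2), I would prove the two containments as follows. For $(\supseteq)$, given $X\subseteq\Hom_R(N,E)$, set $L=\bigcap_{f\in X}\Ker f$ and form the evaluation map $\phi:N/L\to E^X$, $n+L\mapsto(f(n))_{f\in X}$. This is a well-defined monomorphism by the choice of $L$. Since $N/L\in\sm$, its image under $\phi$ lies in the largest submodule of $E^X$ contained in $\sm$, which by the trace construction recalled in the introduction is exactly $\prod_X^{[M]}E$. Hence $N/L$ embeds in a product of copies of $E$ in $\sm$, so by (\ref{11}), $N/L\in\mathcal{F}$, i.e., $L\in\Sat_\tau(N)$. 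For $(\subseteq)$, given $L\in\Sat_\tau(N)$, the quotient $N/L$ is $\tau$-torsionfree, so (\ref{11}) provides a monomorphism $\alpha:N/L\hookrightarrow\prod_I^{[M]}E$ for some index set $I$. Writing $q:N\to N/L$ for the canonical projection and $\pi_i$ for the coordinate projections, set $f_i:=\pi_i\alpha q\in\Hom_R(N,E)$ and $X=\{f_i\mid i\in I\}$. Each $f_i$ factors through $N/L$, so $L\subseteq\bigcap_{i}\Ker f_i$; conversely, any $n\in\bigcap_{i}\Ker f_i$ gives $\pi_i\alpha(n+L)=0$ for all $i$, hence $\alpha(n+L)=0$, and injectivity of $\alpha$ forces $n\in L$. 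Therefore $L=\bigcap_{i\in I}\Ker f_i$, as required.

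The only delicate point in the whole argument is the verification in $(\supseteq)$ of part (2) that $\phi(N/L)$ lies inside $\prod_X^{[M]}E$ and not merely inside the $R$-Mod product $E^X$. This is the step where the distinction between the ambient category $R$-Mod and $\sm$ actually matters, and it is handled cleanly by appealing to the description of $\prod_I^{[M]}E$ as the trace $tr^U(E^I)$ of a generator $U$ of $\sm$ inside $E^I$: since $N/L\in\sm$, its image under $\phi$ automatically lies in the largest submodule of $E^X$ belonging to $\sm$, which is $\prod_X^{[M]}E$.
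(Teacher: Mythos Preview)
Your proof is correct and follows essentially the same route as the paper's. The only difference is that you spell out two steps the paper leaves implicit: the converse in (1), which the paper dismisses as ``obvious'', and the inclusion $(\supseteq)$ in (2), which the paper declares ``clear'' without mentioning the $\prod^{[M]}$ versus $E^X$ issue you take care to address via the trace description.
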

	
	\begin{proof}
		(1) 
		Suppose that $N$ is $\tau$-torsionfree, 
		and let $f:N\to N$ be any endomorphism of $N$. 
		Then there exists an embedding $N/\Ker f\hookrightarrow N$, 
		and since $N$ is $\tau$-torsionfree, we have $\Ker f\in \Sat_\tau(N)$. 
		The converse is obvious.
		
		(2) 
		It is clear that 
		%$\{\bigcap_{f\in X}\Ker f\mid X\subseteq\Hom_R(N,E)\}\subseteq\Sat_\tau(N)$. 
		any intersection of kernels of elements of $\Hom_R (N,E)$ 
		belongs to $\Sat_\tau(N)$. 
		On the other hand, if $L\in\Sat_\tau(N)$, 
		then there is a monomorphism 
		$\alpha:N/L\to\prod_I^{[M]}E$ for some index $I$. 
		Let $\pi:N\to N/L$ and 
		$\rho_i:\prod_I^{[M]}E\to E$ be the canonical projections. 
		Then $\rho_i\alpha\pi\in\Hom_R(N,E)$ 
		for all $i\in I$ and $L\subseteq\bigcap_{i\in I}\Ker \rho_i\alpha\pi$. 
		For $x\in\bigcap_{i\in I}\Ker \rho_i\alpha\pi$, 
		we have $\rho_i(\alpha\pi(x))=0$ for all $i\in I$. 
		This implies that $\alpha(\pi(x))=0$, 
		and since $\alpha$ is a monomorphism, 
		we have $\pi(x)=0$, that is, $x\in L$. 
		Thus $L=\bigcap_{i\in I}\Ker \rho_i\alpha\pi$.
	\end{proof}
	
	Recall that given a hereditary torsion theory $\tau$ in $\sm$, 
	a module $N\in\sm$ is said to be 
	\emph{$(M,\tau)$-injective} 
	if $N$ is injective with respect to every exact sequence 
	$0\to K\to V\to V/K\to 0$ in $\sm$ 
	such that $V/K$ is $\tau$-torsion 
	(see \cite[p.\ 61]{wisbauermodules}).
	
	On the other hand, given $N\leq M$, the \emph{$\tau$-purification of $N$ in $M$} 
	is the least submodule $\overline{N}\leq M$ 
	containing $N$ such that $\overline{N}\in\Sat_\tau(M)$. 
	Some properties of the operator $\overline{(\cdot)}$ 
	can be found in \cite[Lemma 2.2]{PepeKrull}.
	
	\begin{rem}\label{essintpur}
	If $M$ is $\tau$-torsionfree,
	then $\overline{N}$ is an essential extension of $N$.
	To see this, 
	suppose that $N^{\prime} \subseteq \overline{N}$ 
	with $N^{\prime} \cap N = 0$.
	Then $N^{\prime}$ is isomorphic to a submodule of $\overline{N} / N$,
	so it is both $\tau$-torsionfree and $\tau$-torsion, 
	and therefore $N^{\prime} = 0$.
	\end{rem}  
	
	\begin{prop}\label{sat}
		Let $\tau$ be a hereditary torsion theory in $\sm$. 
		Suppose that $N\in\sm$ is $\tau$-torsionfree and $(M,\tau)$-injective. 
		Then a submodule $L\leq N$ belongs to $\Sat_\tau(N)$ 
		if and only if $L$ is $(M,\tau)$-injective.
	\end{prop}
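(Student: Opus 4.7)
The plan is to prove each implication directly from the defining lifting property of $(M,\tau)$-injectivity, using the hypothesis that $N$ itself is both $\tau$-torsionfree and $(M,\tau)$-injective.

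For the forward direction, assume $L \in \Sat_\tau(N)$, so $N/L$ is $\tau$-torsionfree. Given an exact sequence $0 \to K \to V$ in $\sm$ with $V/K$ being $\tau$-torsion and a homomorphism $f \colon K \to L$, I would compose with the inclusion $L \hookrightarrow N$ and use the $(M,\tau)$-injectivity of $N$ to obtain an extension $g \colon V \to N$. The key observation is then that $\pi \circ g \colon V \to N/L$ vanishes on $K$ (since $g|_K$ lands in $L$), so it factors through $V/K$. Because $V/K$ is $\tau$-torsion and $N/L$ is $\tau$-torsionfree, this induced map is zero. Hence $g(V) \subseteq L$, providing the required extension of $f$ into $L$.

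For the converse, assume $L$ is $(M,\tau)$-injective. I want to show $N/L$ is $\tau$-torsionfree. Let $T$ denote the $\tau$-torsion submodule of $N/L$ and set $V = \pi^{-1}(T) \subseteq N$, so that $V/L \cong T$ is $\tau$-torsion. Applying the $(M,\tau)$-injectivity of $L$ to the inclusion $L \hookrightarrow V$ (whose cokernel $V/L$ is $\tau$-torsion) and the identity map $\mathrm{id}_L$, I obtain a retraction $g \colon V \to L$. Then the map $\iota - i \circ g \colon V \to N$ (where $\iota \colon V \hookrightarrow N$ and $i \colon L \hookrightarrow N$ are the inclusions) vanishes on $L$ and so factors through the $\tau$-torsion module $V/L$. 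Since $N$ is $\tau$-torsionfree, the factored map must be zero, so $\iota = i \circ g$, forcing $V = L$ and hence $T = 0$. Therefore $N/L$ is $\tau$-torsionfree, i.e., $L \in \Sat_\tau(N)$.

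Neither direction looks delicate: the argument is essentially the standard diagram-chase relating torsionfree quotients to injectivity along torsion-quotient inclusions. The only mild subtlety is to make sure the exact sequence used to invoke $(M,\tau)$-injectivity in the converse lies in $\sm$, which is automatic since $V \leq N \in \sm$ and $\sm$ is closed under submodules. No extension results about saturated closures or quotient functors are needed; the proof uses only the definitions of $\Sat_\tau$, $\tau$-torsionfreeness, and $(M,\tau)$-injectivity already recalled before the statement.
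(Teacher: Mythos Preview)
Your proof is correct and follows essentially the same strategy as the paper. For the forward implication the paper packages your diagram chase into a commutative square of $\Hom$-groups and invokes a Four-Lemma-type argument, while you carry out the chase explicitly; for the converse the paper cites \cite[9.11]{wisbauermodules} to split the short exact sequence $0\to L\to\overline{L}\to\overline{L}/L\to 0$ and then kills the $\tau$-torsion complement inside the $\tau$-torsionfree $N$, whereas you construct the retraction directly from the definition of $(M,\tau)$-injectivity---your version is slightly more self-contained but otherwise identical in substance.
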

	
	\begin{proof}
		Let $L\in\Sat_\tau(N)$ and let $K\leq V\in\sm$ 
		such that $V/K$ is $\tau$-torsion. 
		Consider the following commutative diagram
		\[\xymatrix{0\ar[r] & 
			\Hom_R(V,L)\ar[r] \ar[d]_\lambda & 
			\Hom_R(V,N)\ar[r] \ar[d]^\cong & 
			\Hom_R(V,N/L)\ar[d]^\mu \\ 0\ar[r] & 
			\Hom_R(K,L)\ar[r] & 
			\Hom_R(K,N)\ar[r] & 
			\Hom_R(K,N/L)}\]
		Since $L$ is $\tau$-torsionfree, 
		$\lambda$ is a monomorphism,
		and since $N/L$ is $\tau$-torsionfree, 
		$\mu$ is a monomorphism. 
		It follows from the five lemma that $\lambda$ is an epimorphism, 
		and so $L$ is $(M,\tau)$-injective.
		
		Conversely, 
		let $L\leq N$ be an $(M,\tau)$-injective submodule of $N$, 
		and suppose that $N/L$ is not $\tau$-torsionfree. 
		Then there exists a submodule $\overline{L}\leq N$ 
		such that $\overline{L}/L=\tau\left( N/L\right)$. 
		Note that $\overline{L}\in\Sat_\tau(N)$. 
		By \cite[9.11]{wisbauermodules} there is a split exact sequence 
		\[ 0\to L\to \overline{L}\to \overline{L}/L\to 0 . \] 
		It follows from Remark \ref{essintpur} that $L=\overline{L}$, and thus $L=\overline{L}\in\Sat_\tau(N)$.
		\end{proof}

	Let $\tau$ be a hereditary torsion theory in $\sm$ and let $N\in\sm$. 
	The set of \emph{$\tau$-dense submodules} of $N$ is defined as
	\[\mathcal{L}(N,\tau)=\{K\leq N\mid N/K\;\text{is $\tau$-torsion}\}.\]

	\begin{rem}\label{linj}
		It follows from \cite[9.11(c)]{wisbauermodules} that 
		if $M$ is a generator of $\sm$, 
		then a module $N\in\sm$ is $(M,\tau)$-injective 
		if and only if 
		$N$ is $\mathcal{L}(M,\tau)$-injective.
	\end{rem}
	
	%\begin{cor}
	%Let $M$ be a projective generator in $\sm$ 
	%and let $\tau$ be a hereditary torsion theory in $\sm$. Suppose $N\in\sm$ is $\tau$-torsionfree and $(M,\tau)$-injective. Then, a submodule $L$ is in $\Sat_\tau(N)$  if and only if $L$ is $\tau$-torsionfree and $(M,\tau)$-injective.
	%\end{cor}
	%
	%\begin{proof}
	%If $K\in\mathcal{L}(M,\tau)$, then there is a commutative diagram 
	%\[\xymatrix{0\ar[r] & \Hom_R(M,L)\ar[r] \ar[d]_\lambda & \Hom_R(M,N)\ar[r] \ar[d]^\cong & \Hom_R(M,N/L)\ar[d]^\mu\ar[r] & 0\\ 0\ar[r] & \Hom_R(K,L)\ar[r] & \Hom_R(K,N)\ar[r] & \Hom_R(K,N/L)}\]
	%with the first row exact, since $M$ is projective. By Remark \ref{linj} and Proposition \ref{sat}, if $L\in\Sat_\tau(M)$, then $L$ is $(M,\tau)$-injective. Conversely, let $L$ be $\tau$-torsionfree and $(M,\tau)$-injective. If $N/L$ is not $\tau$-torsionfree, there exists a submodule $V$ of $N$ containing $L$ such that $V/L$ is $\tau$-torsion. Since $M$ is a generator, there is a nonzero homomorphism $\rho:M\to V/L$. Let $K=\Ker\rho$. Then $K\in\mathcal{L}(M,\tau)$. Using, the above diagram, $\lambda$ is an isomorphism, since $L$ is $\tau$-torsionfree and $(M,\tau)$-injective. It follows that $\mu$ must be monic, and hence $K=0$. Thus $N/L$ is $\tau$-torsionfree.
	%\end{proof}
	
	\begin{lemma}\label{fgdens}
		Let $\tau$ be a hereditary torsion theory in $\sm$. 
		Let $M$ be $\tau$-torsionfree 
		and let $f,g\in\End_R(M)$ 
		such that $f(M),g(M)\in\mathcal{L}(M,\tau)$. 
		Then $f(g(M))\in\mathcal{L}(M,\tau)$.
	\end{lemma}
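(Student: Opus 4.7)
My plan is to produce a short exact sequence relating the quotients $M/f(M)$, $M/g(M)$, and $M/f(g(M))$, and then use the fact that the class of $\tau$-torsion modules is closed under quotients and extensions.

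Specifically, I will start from the inclusions $f(g(M)) \subseteq f(M) \subseteq M$, which give the short exact sequence
\[
0 \longrightarrow f(M)/f(g(M)) \longrightarrow M/f(g(M)) \longrightarrow M/f(M) \longrightarrow 0.
\]
The right-hand term is $\tau$-torsion by hypothesis (since $f(M)\in\mathcal{L}(M,\tau)$). So the task reduces to showing the left-hand term $f(M)/f(g(M))$ is $\tau$-torsion; once this is established, closure of the $\tau$-torsion class under extensions yields $M/f(g(M))$ is $\tau$-torsion, i.e., $f(g(M))\in\mathcal{L}(M,\tau)$.

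For the left-hand term, I would use $f$ itself to exhibit $f(M)/f(g(M))$ as a quotient of $M/g(M)$. The map $\bar{f}\colon M/g(M)\to f(M)/f(g(M))$ defined by $\bar{f}(x+g(M)) = f(x)+f(g(M))$ is well defined (if $x\in g(M)$ then $f(x)\in f(g(M))$) and surjective (every element of $f(M)/f(g(M))$ has the form $f(x)+f(g(M))$). Since $M/g(M)$ is $\tau$-torsion by hypothesis and the $\tau$-torsion class is closed under homomorphic images, $f(M)/f(g(M))$ is $\tau$-torsion, completing the argument.

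There is no real obstacle: the result is a direct consequence of the closure properties of the torsion class of a hereditary torsion theory. In fact this argument does not even use the hypothesis that $M$ is $\tau$-torsionfree; that assumption presumably belongs to the broader setup in which this lemma will be applied (e.g.\ for passing from $\mathcal{L}(M,\tau)$ being closed under composition-type operations to a genuine filter for the module-of-quotients construction).
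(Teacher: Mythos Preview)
Your argument is correct and is essentially the same as the paper's: the paper also uses the surjection $M/g(M)\to f(M)/f(g(M))$ and the extension $0\to f(M)/f(g(M))\to M/f(g(M))\to M/f(M)\to 0$, though it first separates off the degenerate case $g(M)\subseteq\Ker f$ (which your argument handles automatically). Your observation that the hypothesis ``$M$ is $\tau$-torsionfree'' is not actually used in the proof is also correct; the paper invokes it only in that superfluous case split.
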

	
	\begin{proof}
		%If $g(M)\subseteq \Ker f$, 
		%then $\Ker f\in\mathcal{L}(M,\tau)$. 
		%By Proposition \ref{sat}, 
		%$\Ker f\in\Sat_\tau(M)$. 
		%Therefore $\Ker f=M$ and so $f=0$. 
		%Thus, $0=f(g(M))\in\mathcal{L}(M,\tau)$. 
		We first show that we cannot have $g(M)\subseteq \Ker f$. 
		If this were the case,
		then $\Ker f$ would belong to $\mathcal{L}(M,\tau)$, 
		as well as $\Sat_\tau(M)$ ($M$ is $\tau$-torsonfree). 
		This would give us $\Ker f = M$, forcing $f(M) = (0)$,
		which contradicts the hypothesis that $f(M)\in\mathcal{L}(M,\tau)$. 
		
		Assume that $g(M)\nsubseteq \Ker f$. 
		Note that $f$ induces an epimorphism $M/g(M)\to f(M)/f(g(M))$. 
		Since $M/g(M)$ is $\tau$-torsion, 
		it follows that $f(M)/f(g(M))$ is $\tau$-torsion, 
		and so there is an exact sequence
		\[0\to f(M)/f(g(M))\to M/f(g(M))\to M/f(M)\to 0.\]
		Since the $\tau$-torsion class is closed under extensions, 
		we have that $M/f(g(M))$ is $\tau$-torsion, 
		that is, $f(g(M))\in\mathcal{L}(M,\tau)$.
	\end{proof}
	
	\begin{prop}\label{monoL}
		Let $M$ be projective in $\sm$ and $\tau$-torsionfree. 
		Suppose that $\Sat_\tau(M)$ satisfies the ascending chain condition (ACC). 
		If $f\in\End_R(M)$ and $f(M)\in\mathcal{L}(M,\tau)$,
		then $f$ is a monomorphism.
	\end{prop}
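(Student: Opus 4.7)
The strategy is a torsion-theoretic Fitting-lemma argument applied to the iterates of $f$. I will track both the ascending chain of kernels $\Ker f^n$ and the descending chain of images $f^n(M)$, show each lies in the appropriate class ($\Sat_\tau(M)$ for kernels, $\mathcal{L}(M,\tau)$ for images), invoke ACC to stabilize the kernels, and then use a transversality argument to force $\Ker f=0$.

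First I would iterate Lemma~\ref{fgdens}: with $f(M)\in\mathcal{L}(M,\tau)$ as the base case and $g=f^{n-1}$ at the $n$-th step, induction on $n$ yields $f^n(M)\in\mathcal{L}(M,\tau)$ for every $n\ge 1$, so $M/f^n(M)$ is $\tau$-torsion. On the kernel side, the first isomorphism theorem gives $M/\Ker f^n\cong f^n(M)$, which sits inside $M$; since $M$ is $\tau$-torsionfree and $\tau$ is hereditary, $f^n(M)$ is $\tau$-torsionfree, hence $\Ker f^n\in\Sat_\tau(M)$ for every $n\ge 1$.

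Now I would apply the ACC hypothesis to the ascending chain $\Ker f\subseteq\Ker f^2\subseteq\cdots$ in $\Sat_\tau(M)$ to obtain an $n$ with $\Ker f^n=\Ker f^{n+1}$. The core Fitting step is then the transversality $\Ker f\cap f^n(M)=0$: given $y=f^n(x)\in\Ker f$, one has $f^{n+1}(x)=f(y)=0$, so $x\in\Ker f^{n+1}=\Ker f^n$, whence $y=f^n(x)=0$. Consequently the composite $\Ker f\hookrightarrow M\twoheadrightarrow M/f^n(M)$ is a monomorphism; its target is $\tau$-torsion, so $\Ker f$ is $\tau$-torsion by heredity. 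But $\Ker f\le M$ is $\tau$-torsionfree, so $\Ker f=0$ and $f$ is monic.

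The projectivity of $M$ in $\sm$ does not appear explicitly in this argument; I expect it enters only indirectly, through the supporting results (for instance in the derivation of Lemma~\ref{fgdens} via Proposition~\ref{sat}), and of course it makes the hypothesis realistic in the intended applications. The only genuine obstacle I foresee is the Fitting transversality $\Ker f\cap f^n(M)=0$, which however drops out cleanly from kernel stabilization; everything else is either a direct invocation of Lemma~\ref{fgdens} or a routine chase through the hereditary torsion theory.
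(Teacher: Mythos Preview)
Your proof is correct and follows essentially the same Fitting-lemma strategy as the paper: stabilize the kernel chain via ACC on $\Sat_\tau(M)$, establish $\Ker f\cap f^n(M)=0$, and embed $\Ker f$ into the $\tau$-torsion quotient $M/f^n(M)$. The paper phrases the transversality step via a preimage submodule $T$ with $f^k(T)=\Ker f\cap f^k(M)$ rather than elementwise, but this is the same computation, and your observation that projectivity is not used explicitly is accurate---the paper's proof does not invoke it either.
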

	
	\begin{proof}
		Let $f\in\End_R(M)$ such that $f(M)\in\mathcal{L}(M,\tau)$. 
		Consider the chain
		\[\Ker f\subseteq \Ker(f\circ f)\subseteq\Ker(f\circ f\circ f)\subseteq \cdots\subseteq \Ker(f^{n})\subseteq\cdots\]
		Since $M$ is $\tau$-torsionfree, 
		$\Ker(f^n)\in\Sat_\tau(M)$ for all $n>0$. 
		By hypothesis, there exists $k>0$ such that 
		$\Ker(f^k)=\Ker(f^{k+i})$ for all $i\geq 0$. 
		
		%By Lemma \ref{fgdens}, 
		%we have $f^{k}(M)\in\mathcal{L}(M,\tau)$. 
		%We have that there exists a submodule $T$ of $M$ 
		%such that $f^k(T)=\Ker f\cap f^k(M)$. 
		Let $T = \left \{ t \in M \mid f^k (t) \in \Ker f \cap f^k (M) \right \}$.
		Then
		\[0=f(\Ker f\cap f^k(M))=ff^k(T)=f^{k+1}(T),\]
		and it follows that $T\subseteq\Ker f^{k+1}=\Ker f^{k}$,
		so $\Ker f\cap f^{k}(M)=f^{k}(T)=0$. 
		This implies that $\Ker f$ can be embedded into $M/f^k(M)$, 
		which is $\tau$-torsion because $f^{k}(M)\in\mathcal{L}(M,\tau)$ 
		by Lemma \ref{fgdens}. 
		Since $\Ker f \in \Sat_\tau(M)$,
		we have that $\Ker f=0$, 
		and hence $f$ is a monomorphism.
	\end{proof}
	
	\begin{rem}
		Lemma \ref{fgdens} and Proposition \ref{monoL} are generalizations of 
		\cite[Lemma 3.11 and Proposition 3.12]{PepeKrull}, respectively.
	\end{rem}
	
	\begin{rem}
		From the proof of Proposition \ref{monoL}, we can see that if $\tau$ is a hereditary torsion theory, $M$ is $\tau$-torsionfree and $\Sat_\tau(M)$ satisfies ACC, then $M$ is \emph{strongly Hopfian} according to \cite{hmaimou2007generalized}.
	\end{rem}

	\section[The quotient category]{The quotient category of $\sigma [M]$ 
		with $M$ a semiprime Goldie module}\label{qcsgm}
	
	Let $M$ be a module that is projective in $\sm$,
	and consider the hereditary torsion theory 
	$\tau_g=(\mathcal{T},\mathcal{F})$ in $\sm$ 
	cogenerated by $\widehat{M}=E^{[M]}(M)$. 
	Then
	\begin{equation}\label{12}
		\begin{split}
			\mathcal{T}& =\{N\in\sm\mid \Hom_R(N,\widehat{M})=0\} \\
			\mathcal{F}& =\left\lbrace L\in\sm\mid L\hookrightarrow 
			\textstyle{\prod_I^{[M]}\widehat{M}}\;
			\text{for some index set}\;I\right\rbrace 
		\end{split}
	\end{equation}
	
	Recall that a module $N\in\sm$ is said to be \emph{$M$-singular} 
	if there exists an exact sequence in $\sm$
	\[0\to K\to L\to N\to 0\]
	such that $K\ess L$. 
	The class of all $M$-singular modules in $\sm$ 
	is a hereditary pretorsion class that will be denoted by $\mathcal{S}$. 
	Note that every module $N$ in $\sm$ 
	contains a largest $M$-singular submodule $\mathcal{S}(N)$. 
	The module $N$ is said to be \emph{non-$M$-singular} if $\mathcal{S}(N)=0$. 
	It follows from \cite[10.2]{wisbauermodules} that if $M$ is non-$M$-singular, $\mathcal{T}=\mathcal{S}$ and $\mathcal{F}$ 
	consists of all non-$M$-singular modules in $\sm$ .
	%Note that in this case, $\mathcal{L}(M,\tau_g)=\{N\leq M\mid N\ess M\}$.
	
	\begin{lemma}\label{injtg}
		Let $M$ be projective in $\sm$ 
		and let $\mathcal{S}$ be the 
		hereditary pretorsion class of $M$-singular modules. 
		Then $N\in\sm$ is $M$-injective 
		if and only if 
		it is $(M,\mathcal{S})$-injective.
	\end{lemma}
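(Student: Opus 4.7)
The plan is to treat the two implications separately: the reverse direction is a short complements argument inside $M$, while the forward direction reduces through an intermediate step that $N$ is $L$-injective for every $M$-generated $L\in\sm$.

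For $(\Leftarrow)$, given $K\leq M$ and $f\colon K\to N$, I would pick by Zorn's lemma a submodule $K'\leq M$ maximal with $K\cap K'=0$. Then $K\oplus K'\ess M$, so $M/(K\oplus K')\in\mathcal{S}$, and extending $f$ by zero on $K'$ gives $\bar{f}\colon K\oplus K'\to N$, $\bar{f}(k+k')=f(k)$. By $(M,\mathcal{S})$-injectivity, $\bar{f}$ extends to some $g\colon M\to N$, and $g|_K=f$.

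For $(\Rightarrow)$, the intermediate claim is that if $N$ is $M$-injective and $L\in\sm$ is $M$-generated, then $N$ is $L$-injective. I would prove this by Zorn's lemma on partial extensions: given $K\leq L$ and $f\colon K\to N$, let $(K^*,g^*)$ be a maximal pair with $K\leq K^*\leq L$ and $g^*|_K=f$. Suppose $K^*\neq L$. Writing $L=\sum_\alpha \phi_\alpha(M)$, there must exist some $\phi=\phi_\alpha\colon M\to L$ with $\phi(M)\not\subseteq K^*$. Set $M'=\phi^{-1}(K^*)\leq M$; the composite $g^*\circ\phi|_{M'}\colon M'\to N$ extends by $M$-injectivity to $\psi\colon M\to N$. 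Then the formula $g'(x+\phi(m)):=g^*(x)+\psi(m)$ defines a map on the strictly larger submodule $K^*+\phi(M)$ that extends $g^*$, contradicting maximality; hence $K^*=L$.

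With the intermediate claim in hand, for any $V\in\sm$, $K\leq V$ with $V/K\in\mathcal{S}$, and $f\colon K\to N$, I would embed $V$ into an $M$-generated module $L\in\sm$ (possible by the very definition of $\sm$), use the intermediate claim to extend $f\colon K\hookrightarrow L\to N$, and restrict back to $V$. This in fact shows $N$ is injective in all of $\sm$, which is stronger than $(M,\mathcal{S})$-injectivity. The main obstacle is the well-definedness check for $g'$: if $x+\phi(m)=x'+\phi(m')$, then $\phi(m'-m)=x-x'\in K^*$, so $m'-m\in M'$, and the identity $\psi|_{M'}=g^*\circ\phi|_{M'}$ supplies the cancellation $g^*(x)-g^*(x')=g^*\phi(m'-m)=\psi(m'-m)=\psi(m')-\psi(m)$, so that $g^*(x)+\psi(m)=g^*(x')+\psi(m')$ as required.
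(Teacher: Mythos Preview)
Your proof is correct and matches the paper's approach on the substantive $(\Leftarrow)$ direction: the complement trick you spell out is exactly what the paper means by ``without loss of generality, we can assume $L\ess M$.'' For $(\Rightarrow)$ the paper simply writes ``It is clear,'' taking for granted the standard fact (e.g.\ Wisbauer) that an $M$-injective module in $\sm$ is injective in $\sm$; your Zorn argument is a correct, if unnecessary here, reproof of that fact.
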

	
	\begin{proof}
		$\Rightarrow$)
		This is clear.
		
		$\Leftarrow$)
		Let $L$ be a submodule of $M$ and let $f:L\to N$ be any homomorphism. 
		Without loss of generality, we can assume that $L\ess M$, 
		and hence $M/L\in \mathcal{S}$. 
		Since $N$ is $(M,\mathcal{S})$-injective, 
		$f$ can be extended to a homomorphism $M\to N$.
	\end{proof}
	
	\begin{defn}
		Let $M$ be a module. 
		An \emph{$M$-annihilator} is a submodule of $M$ 
		of the form $\bigcap_{f\in X}\Ker f$, for some $X\subseteq\End_R(M)$.
	\end{defn}
	
	\begin{defn}
		A module $M$ is called a \emph{Goldie module} 
		if $M$ satisfies the ascending chain condition (ACC) on $M$-annihilators 
		and has finite uniform dimension.
	\end{defn}
	
	Given two submodules $N$ and $L$ of a module $M$, 
	their \emph{product} in $M$ is the submodule given by 
	\[N_ML=\sum\{f(N)\mid f\in\Hom_R(M,L)\}.\]
	This product was defined in \cite{BicanPr},
	and can be used to define generalizations
	of the notions of prime and semiprime ideals of a ring,
	as follows. 
	A fully invariant submodule $K\leq M$ 
	is said to be a \emph{prime} submodule of $M$
	if $N_ML\subseteq K$ implies that $N\subseteq K$ or $L\subseteq K$,
	for all fully invariant submodules $N, L$ of $M$;
	it is said to be a \emph{semiprime} submodule of $M$
	if $N_MN\subseteq K$ implies that $N\subseteq K$,
	for all fully invariant submodules $N\subseteq M$.
	If $0$ is a (semi)prime submodule of a module $M$, 
	we say that $M$ is a \emph{(semi)prime module}. 
	These concepts were introduced in \cite{raggiprime} and \cite{raggisemiprime}, 
	and semiprime Goldie modules were studied later in \cite{maugoldie}. 
	
	An operator $\mathfrak{L}$ was defined in \cite{mauprimerad}, 
	which assigns to each module $M$ projective in $\sm$ 
	its \emph{prime radical} $\mathfrak{L}(M)$ \cite[Corollary 3.12]{mauprimerad}, that is, the intersection of all prime submodules of $M$. 
	This operator commutes with direct sums \cite[Lemma 3.14]{mauprimerad} 
	and satisfies the condition that $\mathfrak{L}(M/\mathfrak{L}(M))=0$ 
	\cite[Proposition 3.9]{mauprimerad}. 
	
	\begin{defn}
		A module $M$ is said to be \emph{retractable} 
		if $\Hom_R(M,N)\neq 0$ for every $0\neq N\leq M$.
	\end{defn}
	
	\begin{rem}
		If $M$ is a semiprime module that is projective in $\sm$, 
		then $M$ is retractable. 
		To show this, suppose that $\Hom_R(M,N)=0$ for some $N\leq M$. 
		Then $N_MN=0$, and so $N=0$. 
		Since $M$ is semiprime and projective in $\sm$.
	\end{rem}
	
	The following result was given as a generalization of Goldie's Theorem in \cite[Theorem 2.8]{maugoldie}.
	
	\begin{thm}
		Let $M$ be projective in $\sm$ with finite uniform dimension. The following conditions are equivalent:
		\begin{enumerate}[label=\emph{(\alph*)}]
			\item $M$ is semiprime and non-$M$-singular.
			\item $M$ is semiprime and satisfies ACC on annihilators.
			\item Let $N\leq M$, then $N$ is essential in $M$ if and only if there exists a monomorphism $f:M\to N$.
		\end{enumerate}
	\end{thm}	
	
	The proof of (b)$\Rightarrow$(c) was given as a consequence of \cite[Proposition 3.13]{PepeKrull} but it turns out that the proof of that result is not correct. For the convenience of the reader we prove (b)$\Rightarrow$(c) below.
	
	\begin{lemma}\label{spnmskern0}
		Let $M$ be projective in $\sm$. Suppose that $M$ is a semiprime non $M$-singular module with finite uniform dimension and $0\neq N\leq M$. Then there exists $0\neq f:M\to N$ such that $\Ker f\cap N=0$.
	\end{lemma}
		
	\begin{proof}
		Let $N\leq M$. We will proceed by induction on $Udim(N)=\ell$. Since $M$ is semiprime, $N_MN\neq 0$. Hence there exist $f:M\to N$ and $n\in N$ such that $f(n)\neq 0$. If $N\cap \Ker f\neq 0$, then $N\cap \Ker f\ess N$ and $N\cap \Ker f$ is proper in $N$ because $N\nsubseteq \Ker f$. It follows that $\frac{N}{N\cap \Ker f}$ is $M$-singular and there exists a nonzero monomorphism $\frac{N}{N\cap \Ker f}\to N$ which is a contradiction since $N$ is non $M$-singular. Thus $N\cap \Ker f=0$. Now suppose that $Udim(N)=\ell$ and fix $L\leq N$ with $Udim(L)=\ell-1$. By induction hypothesis, there exists $0\neq g:M\to L$ such that $\Ker g\cap L=0$. We can assume that $U=\Ker g\cap N\neq 0$, otherwise we can take the morphism $g$. Then $Udim(U)=1$ and as we did before there exists $0\neq h:M\to U$ such that $\Ker h\cap U=0$. Note that if, $(g+h)(x)=0$, then $g(x)=-h(x)\in U\cap L=0$. Let $f=g+h:M\to N$. Thus $\Ker f\cap N=\Ker(g+h)\cap N=\Ker g\cap \Ker h\cap N=\Ker h\cap U=0$.
	\end{proof}
	
	\begin{prop}
		Let $M$ be projective in $\sm$. If $M$ is a semiprime Goldie module then, a submodule $N$ is essential in $M$ if and only if there exists a monomorphism $f:M\to N$.
	\end{prop} 
	
	\begin{proof}
		Let $N\leq{M}$. By \cite[Proposition 3.4]{PepeKrull}, $M$ is non $M$-singular. Suppose that $N\leq_e{M}$. It follows from Lemma \ref{spnmskern0} that there exists $0\neq f:M\to N$ such that $\Ker f\cap N=0$. Since $N\ess M$, $f$ must be a monomorphism. Now, if $f:M\rightarrow{N}$ is a monomorphism then $N\leq_e{M}$ because $M$ has finite uniform dimension.	
	\end{proof}
	
	Let $M$ be a semiprime Goldie module that is projective in $\sm$. Then $M$ is non-$M$-singular by \cite[Theorem 2.8]{maugoldie}. 
	Consider the quotient category $\sm/\tau_g$ and the localization functor
	\[\mathcal{Q}_{\tau_g}:\sm\to \sm/\tau_g.\]
	In this case, $\mathcal{Q}_{\tau_g}$ is an exact functor 
	given by $\mathcal{Q}_{\tau_g}(N)=E^{[M]}(N/\gamma(N))$,
	where $\gamma$ is the torsion radical 
	associated to $\tau_g$ \cite[9.13, 10.5]{wisbauermodules}. 
	Since every object in $\sm/\tau_g$ is $M$-injective, 
	the category $\sm/\tau_g$ is a spectral category. 
	
	As a particular case of Proposition~\ref{sat}, 
	we have the following lemma.
	
	\begin{lemma}\label{sattg}
		Let $M$ be projective in $\sm$ and non-$M$-singular, 
		and suppose that $N\in\sm$ is $\tau_g$-torsionfree and $(M,\tau_g)$-injective. 
		Then $L\in\Sat_{\tau_g}(N)$ if and only if 
		$L$ is $\tau_g$-torsionfree and $(M,\tau_g)$-injective.
	\end{lemma}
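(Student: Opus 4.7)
The plan is to specialize Proposition \ref{sat} to the torsion theory $\tau=\tau_g$. Under the hypotheses of the lemma, the ambient module $N$ is $\tau_g$-torsionfree and $(M,\tau_g)$-injective, which is precisely the standing hypothesis of Proposition \ref{sat}. That proposition characterizes $\Sat_{\tau_g}(N)$ as the set of $(M,\tau_g)$-injective submodules of $N$, so most of the work is already done; the only extra item to track is the clause ``$L$ is $\tau_g$-torsionfree''.

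For the forward direction, I would start from $L\in\Sat_{\tau_g}(N)$. Proposition \ref{sat} immediately gives that $L$ is $(M,\tau_g)$-injective. The additional $\tau_g$-torsionfreeness of $L$ is automatic: since $\tau_g$ is a hereditary torsion theory in $\sm$, its torsionfree class is closed under submodules, and $L\leq N$ with $N$ torsionfree yields $L$ torsionfree.

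For the backward direction, I would take $L\leq N$ that is $\tau_g$-torsionfree and $(M,\tau_g)$-injective. The $(M,\tau_g)$-injectivity alone suffices to invoke the converse half of Proposition \ref{sat}, which places $L$ in $\Sat_{\tau_g}(N)$. The torsionfreeness assumption is redundant here but is included in the statement to keep the two conditions symmetric on either side of the equivalence.

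I do not expect any obstacle: the excerpt explicitly flags the lemma as a ``particular case'' of Proposition \ref{sat}, and the only content not contained in that proposition is the trivial remark that a submodule of a $\tau_g$-torsionfree module is $\tau_g$-torsionfree. The hypotheses ``$M$ projective in $\sm$'' and ``$M$ non $M$-singular'' play no further role in this specialization beyond ensuring that $\tau_g$ and the ambient setting behave as described in Section \ref{qcsgm}.
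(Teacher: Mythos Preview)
Your proposal is correct and matches the paper's approach: the paper simply states that Lemma \ref{sattg} is a particular case of Proposition \ref{sat}, and your argument fills in exactly the routine details (specializing $\tau=\tau_g$ and noting that submodules of $\tau_g$-torsionfree modules are $\tau_g$-torsionfree).
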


	\begin{lemma}\label{sataccudim}
		Let $\tau$ be a hereditary torsion theory in $\sm$, 
		and let $N$ be a $\tau$-torsionfree module in $\sm$. 
		If $\Sat_\tau(N)$ satisfies the ascending chain condition, 
		then $N$ has finite uniform dimension.
	\end{lemma}
	
	\begin{proof}
		Suppose that $N$ contains an infinite direct sum of nonzero submodules.
		As in \cite[Proposition~IX.1.11]{stenstromrings},
		the quotient functor $\mathcal{Q}_{\tau}$ 
		is a left adjoint of the inclusion functor
		from $\sm/\tau$ into $\sm$,
		and so it preserves coproducts.
		But by hypothesis, 
		the object $\mathcal{Q}_{\tau} (N)$ satisfies the ACC 
		in the quotient category,
		and thus it cannot contain an infinite coproduct of subobjects.
		This provides the desired contradiction.
	\end{proof}

	\begin{lemma}\label{specnonms}
		Let $M$ be an $R$-module. The following conditions are equivalent for the torsion theory $\tau_g$.
		\begin{enumerate}[label=\emph{(\alph*)}]
			\item $M$ is non-$M$-singular.
			\item $\sm/\tau_g$ is a spectral category
		\end{enumerate}
	\end{lemma}
	
	\begin{proof}
		(a)$\Rightarrow$(b) $\sm/\tau_g$ consists of all non-$M$-singular injective modules in $\sm$. Thus it is spectral.
		
		(b)$\Rightarrow$(a) Let $N\leq L\leq M$ with $N$ essential in $L$. We have the following diagram with exact rows
		\[\xymatrix{0\ar[r] & N\ar[r]^i\ar[d]_{\varphi_N} & L\ar[d]_{\varphi_L} \\ 
			0\ar[r] & \mathcal{Q}_{\tau_g}(N)\ar[r]^{\mathcal{Q}(i)} & \mathcal{Q}_{\tau_g}(L) }\]
		The vertical arrows are the localization morphims. Since $N$ is essential in $\mathcal{Q}(N)$ and $\varphi_Li$ is an essential monomorphism, then $\mathcal{Q}(i)$ is an essential monomorphim. By hypothesis $\mathcal{Q}(i)$ splits which implies that $\mathcal{Q}(i)$ is an isomorphism. By \cite[9.18]{wisbauermodules} $L/N$ is $\tau_g$-torsion. Let $f:N\to M$ be any morphism with $N\leq M$ and $\Ker f$ essential in $N$. By the above, $N/\Ker f$ is $\tau_g$-torsion and it embeds in $M$. Thus $f=0$ and $M$ is non-$M$-singular.
	\end{proof}

	\begin{lemma}\label{retfactor}
		Let $M$ be projective in $\sm$ and $H$ be a module in $\sm$. If $\Hom_R(M,H)\neq 0$, then $M/\Ann_M(H)$ is retractable.
	\end{lemma}

	\begin{proof}
		Let $K/\Ann_M(H)$ be a submodule of $M/\Ann_M(H)$ and suppose that 
		\[\Hom_R(M/\Ann_M(H),K/\Ann_M(H))=0.\]
		Since $\Ann_M(H)$ is fully invariant in $M$, any homomorphism $f:M\to K$ induces a homomorphism $\overline{f}:M/\Ann_M(H)\to K/\Ann_M(H)$ such that $\pi f=\overline{f}\pi$ where $\pi:M\to M/\Ann_M(H)$ is the canonical projection. This implies that for all $f\in\Hom_R(M,K)$, $f(M)\subseteq \Ann_M(H)$. Therefore $0=tr^M(K)_MH=(M_MK)_MH=M_M(K_MH)=tr^M(K_MH)$. This implies that $K_MH=0$ because $\Hom_R(M,H)\neq 0$. Thus, $K\subseteq \Ann_M(H)$. Hence $M/\Ann_M(H)$ is retractable.
	\end{proof}

	\begin{thm}\label{goldiefl}
		Let $M$ be projective in $\sm$, $S=\End_R(M)$ and $T=\End_R(\widehat{M})$. The following conditions are equivalent:
		\begin{enumerate}[label=\emph{(\alph*)}]
			\item $M$ is a semiprime Goldie module;
			
			\item $S$ is a right order in $T$, $M$ is retractable and $\sm/\tau_g$ is a discrete spectral category equivalent to Mod-$T$ (the category of right $T$-modules) with $\mathcal{Q}_{\tau_g}(M)$ as generator of finite length.
			
			\item $M$ is retractable, $\mathcal{Q}_{\tau_g}(M)$ is a semisimple generator of finite length of  $\sm/\tau_g$ which is  
			equivalent to Mod-$T$ (the category of right $T$-modules). For every simple object $A$ of $\sm/\tau_g$, $\Ann_M(A)$ is a prime submodule of $M$. 
			
			\item $M$ is retractable, $\mathfrak{L}(M)=0$ and $\mathcal{Q}_{\tau_g}(M)$ is semisimple of finite length 
			in the quotient category $\sm/\tau_g$.
			
			\item $M$ is retractable, $\mathcal{Q}_{\tau_g}(M)$ is a semisimple generator of finite length 
			in the quotient category $\sm/\tau_g$, and for every $N\in \mathcal{L}(M,\tau)$ there exists an $f:M\to N$ such that $f(M)\in\mathcal{L}(N,\tau)$.
		\end{enumerate}
	\end{thm}

	\begin{proof}
		(a)$\Rightarrow$(b) 
		Let $\mathcal{Q}_{\tau_g}$ be the localization functor. 
		Then there is an essential monomorphism 
		$\alpha:M^{(X)}\hookrightarrow \widehat{M}^{(X)}
		=\mathcal{Q}_{\tau_g}(M)^{(X)}$, 
		and so by applying $\mathcal{Q}_{\tau_g}$, we get that 
		$\mathcal{Q}_{\tau_g}(M^{(X)})\cong\mathcal{Q}_{\tau_g}(M)^{(X)}$.  
		Since every $M$-injective module in $\sm$ is $M$-generated 
		and the localization functor is exact, 
		every object in the category $\sm/\tau_g$ is generated by $\widehat{M}$. 
		Since $\sm/\tau_g$ is a spectral category with generator $\widehat{M}$, 
		it follows from \cite[Ch. XII, Theorem 1.3 and Theorem 1.4]{stenstromrings} 
		that $\sm/\tau_g$ is a discrete spectral category and it is equivalent to Mod-$T$ via the functor $\Hom(\widehat{M},\_)$. 
		Since $M$ is a semiprime Goldie module, 
		it follows that $S$ is a semiprime right Goldie ring and 
		$T$ is the classical right ring of quotients 
		of $S$ \cite[Theorem 2.22]{maugoldie}. 
		
		(b)$\Rightarrow$(c) Since $\sm/\tau_g$ is a discrete spectral category, $\mathcal{Q}_{\tau_g}(M)$ is a semisimple object. By hypothesis, $\mathcal{Q}_{\tau_g}(M)$ is a generator of finite length. By Lemma \ref{specnonms}, $M$ is non-$M$-singular and hence $\mathcal{Q}_{\tau_g}=\widehat{M}$ and $T$ is a regular ring.
		
		Claim 1: If $N$ is a fully invariant submódule of $M$, then $E^{[M]}(N)$ is fully invariant in $\widehat{M}$. By hypothesis $S$ is a right order in $T$, that is, there exists a subset $C$ of regular elements of $S$ such that $T=SC^{-1}$. Let $f\in C$. Then $f(N)\subseteq N$. It follows that $E^{[M]}(f(N))\subseteq E^{[M]}(N)$. Considering $f^{-1}\in T$, we have 
		\[N\subseteq f^{-1}\left( E^{[M]}(f(N))\right)\subseteq f^{-1}\left( E^{[M]}(N)\right).\]
		This implies that $E^{[M]}(N)\subseteq f^{-1}\left( E^{[M]}(N)\right)$ because $T$ is regular and hence there is an ascending chain of direct summands of $\widehat{M}$:
		\[E^{[M]}(N)\subseteq f^{-1}\left( E^{[M]}(N)\right)\subseteq f^{-2}\left( E^{[M]}(N)\right)\subseteq\cdots\]
		Since direct summands are $\tau_g$-torsionfree, they are subobjects of $\mathcal{Q}_{\tau_g}(M)$ and so this chain must stabilize. Applying $f$ many times as we need,  $f^{-1}\left(E^{[M]}(N)\right)=E^{[M]}(N)$. Now, given any $g\in S$ we get the following commutative squares
		\[\xymatrix{N\ar@{^(-{>}}[r]^i \ar[d]_{g|} & M \ar[d]^g & & & E^{[M]}(N)\ar@{^(-{>}}[r]^-{\mathcal{Q}(i)} \ar[d]_{\hat{g}|}^{\mathcal{Q}(g|)} & \widehat{M} \ar[d]^{\hat{g}} \\
			N\ar@{^(-{>}}[r]^i & M & & & E^{[M]}(N)\ar@{^(-{>}}[r]_-{\mathcal{Q}(i)} & \widehat{M}}\]
		
		where $i$ is the canonical inclusion and $\hat{g}$ is the image of $g$ in $T$. We have that $\mathcal{Q}(i)$ is the inclusion and therefore $\hat{g}\left(E^{[M]}(N)\right)\subseteq E^{[M]}(N)$. Since every morphism in $T$ has the form $\hat{g}f^{-1}$ with $f\in C$ and $g\in S$, then $E^{[M]}(N)$ is fully invariant in $\widehat{M}$.
		
		Let $\mathcal{Q}_{\tau_g}(M)=H_1\oplus\cdots\oplus H_\ell$ be a decomposition in homogeneous components of $\mathcal{Q}_{\tau_g}(M)$ in $\sm/\tau_g$. Then $N=\left(\bigoplus_{i\neq j}H_j\right)\cap M$ is a fully invariant $\tau_g$-pure submodule of $M$. Let $L/N$ be a fully invariant submodule of $M/N$. Then $L$ is fully invariant in $M$. By Claim 1, $\mathcal{Q}_{\tau_g}(L)=E^{[M]}(L)$ is fully invariant in $\widehat{M}$ and therefore $\mathcal{Q}_{\tau_g}(L/N)=\mathcal{Q}_{\tau_g}(L)/\mathcal{Q}_{\tau_g}(N)$ is fully invariant in $\mathcal{Q}_{\tau_g}(M/N)\cong H_i$. Since $H_i$ is a homogeneous component, it has no nontrivial fully invariant submodules. Hence $\mathcal{Q}_{\tau_g}(L/N)=H_i$ and this implies that every fully invariant submodule of $M/N$ is $\tau_g$-dense.
		
		Claim 2: $\Ann_M(H_i)=\left(\bigoplus_{i\neq j}H_j\right)\cap M$. Let $N=\left(\bigoplus_{i\neq j}H_j\right)\cap M$. Every morphism $f:M\to H_i$ corresponds uniquely to a morphism $\hat{f}:\widehat{M}\to H_i$. So, $N\subseteq \Ker f$. Thus $N\subseteq\Ann_M(H_i)$. On the other hand, $\Ann_M(H_i)/N$ is a fully invariant submodule of $M/N$. Then it is $\tau_g$-dense. But $\Ann_M(H_i)\in\Sat_{\tau_g}(M)$. Hence $\Ann_M(H_i)=N$.
		
		Claim 3: $N=\left(\bigoplus_{i\neq j}H_j\right)\cap M$ is a prime submodule of $M$. Consider two fully invariant submodules $A$ and $B$ of $M/N$ such that $A_{M/N} B=0$. Note that $M/N$ is $\tau_g$-torsionfree and so is $B$. Then, for every morphism $f:M/N\to B$, $A\subseteq \Ker f$. Since $A$ is $\tau_g$-dense, so is $\Ker f$. Thus $f=0$. Note that $\Hom_R(M,H_i)\neq 0$ because $H_i$ is an injective hull of a submodule of $M$. Then $M/N$ is retractable by Claim 2 and Lemma \ref{retfactor}. Thus $B=0$ and hence $M/N$ is a prime module which is equivalent to say that $N$ is a prime submodule of $M$.
		
		Let $A$ be a simple object in $\sm/\tau_g$. Then there exists a homogeneous component $H$ of $\mathcal{Q}_{\tau_g}(M)$ and a monomorphism $A\to H$. Then $\Ann_M(H)\subseteq \Ann_M(A)$. It follows that $\Ann_M(A)/\Ann_M(H)$ is fully invariant in $M/\Ann_M(H)$ and hence it is $\tau_g$-dense. Thus $\Ann_M(H)=\Ann_M(A)$ which is prime in $M$.
		
		(c)$\Rightarrow$(d) By hypothesis, $\mathcal{Q}_{\tau_g}(M)$ is semisimple of finite length in $\sm/\tau_g$. Let $H_1,.., H_\ell$ be the homogeneous components of $\mathcal{Q}_{\tau_g}(M)=\widehat{M}$. Then $\Ann_M(H_i)=P_i$ is a prime submodule of $M$. Therefore 
		
		\[\mathfrak{L}(M)\subseteq\bigcap_{i=1}^\ell\Ann_M(H_i)=\Ann_M\left(\sum_{i=1}^\ell H_i\right)=\Ann_M(\widehat{M})=0.\]
		
		(c)$\Rightarrow$(a) 
		Since $\mathfrak{L}(M)=0$, $M$ is a semiprime module.
		Note that for every $X\subseteq\End_R(M)$, the annihilator
		$\bigcap_{f\in X}\Ker f$ belongs to $\Sat_{\tau_g}(M)$.
		By Proposition \ref{sat}, 
		$\Sat_{\tau_g}(M)$ corresponds to the subobjects of 
		$\mathcal{Q}_{\tau_g}(M)$. 
		Since $\mathcal{Q}_{\tau_g}(M)$ has finite length, 
		$M$ must satisfy the ACC on $M$-annihilators, 
		and it has finite uniform dimension by Lemma~\ref{sataccudim}.
		
		(a)$\Rightarrow$(d)
		Since $M$ is a semiprime Goldie module, 
		it is non-$M$-singular and retractable. 
		Furthermore, we have that $\mathcal{Q}_{\tau_g}(M)=\widehat{M}$. 
		It follows from Lemma \ref{sattg} 
		that the subobjects of $\mathcal{Q}_{\tau_g}(M)$ 
		corresponds to the direct summands of $\widehat{M}$. 
		By \cite[Theorem 2.22]{maugoldie}, 
		$\End_R(M)$ is an order in the semisimple Artinian ring 
		$T=\End_R(\widehat{M})$, 
		and then the subobjects of $\mathcal{Q}_{\tau_g}(M)$ 
		are in one-to-one correspondence with the idempotents in $T$. 
		Since $T$ is Artinian, 
		it cannot contains an infinite set of distinct idempotents, 
		and therefore $\mathcal{Q}_{\tau_g}(M)$ has finite length. 
		
		If $M$ is a semiprime Goldie module, $\mathcal{L}(M,\tau)$ is exactly the set of essential submodules of $M$. Given $N\in\mathcal{L}(M,\tau)$, there exists a monomorphism $f:M\to N$ by \cite[Theorem 2.8]{maugoldie}. Since $M$ has finite uniform dimension $f(M)$ es essential in $N$. Thus $f(M)\in\mathcal{L}(M,\tau)$. 
		
		(d)$\Rightarrow$(a) By Lemma \ref{specnonms}, $M$ is non-$M$-singular. This implies that every essential submodule of $M$ is in $\mathcal{L}(M,\tau_g)$. With Lemma \ref{monoL} and \ref{sataccudim} we can see that $M$ satisfies the condition (3) of \cite[Theorem 2.8]{maugoldie}. Thus $M$ is a semiprime Goldie module.
	\end{proof}
	
	\begin{cor}\label{qmax}
		The following conditions are equivalent for a ring $R$:
		\begin{enumerate}[label=\emph{(\alph*)}]
			\item $R$ is a semiprime left Goldie ring.
			
			\item $Q_{max}^\ell(R)$ is semisimple and $R$ is a left order in $Q_{max}(R)$. 
			
			\item $Q_{max}^\ell(R)$ is semisimple and for every simple $Q_{max}(R)$-module $A$, $\Ann_R(A)$ is a prime ideal of $R$.
			
			\item $Q_{max}^\ell(R)$ is semisimple and $\mathfrak{L}(R)=0$.
			
			\item $Q_{max}^\ell(R)$ is semisimple and for every dense left ideal $I$, there exists $a\in I$ such that $Ra$ is dense.
		\end{enumerate} 
	\end{cor}
	
	\begin{rem}
		Last Corollary extends \cite[13.40 and 13.41]{lamlectures} and therefore, we have that $Q_{max}^\ell(R)=Q_{cl}^\ell(R)$ in items (b) to (e).
	\end{rem}
	
	\section{Modules with finite reduced rank in $\sm$}\label{mfrr}
	
	%%% +++ I did add some introductory comments here.
	We are now ready to study the notion of reduced rank in $\sigma [M]$.
	To obtain results parallel to those for $R$-Mod,
	we need to assume that $M$ is a projective generator in $\sigma [M]$,
	thus not deviating too far from a category of modules.
	Before formally stating the definition of reduced rank in $\sigma [M]$,
	we need to recall some results that are used subsequently.
	
	\begin{lemma}[{\cite[Proposition 1.5]{PepeFbn}}]\label{genq}
		Let $M$ be a projective generator of $\sm$. 
		The following conditions hold for a fully invariant submodule $P\leq M$:
		\begin{enumerate}
			\item $\sigma[M/P]=\{N\in\sm\mid P\subseteq \Ann_M(N)\}$;
			\item $M/P$ is a projective generator of $\sigma[M/P]$.
		\end{enumerate}
	\end{lemma}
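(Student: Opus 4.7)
The plan is to dispatch the two parts in sequence, with (1) doing essentially all the work and (2) being a quick corollary. Throughout I read $\Ann_M(N)=\bigcap\{\Ker f\mid f\in\Hom_R(M,N)\}$, the reject of $M$ in $N$.

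For the forward inclusion of (1), suppose $N\in\sigma[M/P]$, so there is an embedding $\iota:N\hookrightarrow L$ with $L$ a quotient of $(M/P)^{(I)}$ for some $I$. Given $f\in\Hom_R(M,N)$, I would lift the composite $\iota f:M\to L$ through the epimorphism $(M/P)^{(I)}\twoheadrightarrow L$ to a map $\tilde f:M\to(M/P)^{(I)}$, using projectivity of $M$ in $\sm$. The task then reduces to showing each coordinate $\pi_j\tilde f:M\to M/P$ kills $P$. By projectivity of $M$ again, $\pi_j\tilde f$ lifts through $M\twoheadrightarrow M/P$ to an endomorphism $\varphi_j\in\End_R(M)$; because $P$ is fully invariant, $\varphi_j(P)\subseteq P$, hence $\pi_j\tilde f(P)=0$. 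Consequently $\tilde f(P)=0$, so $\iota f(P)=0$, and since $\iota$ is monic, $f(P)=0$. Thus $P\subseteq\Ann_M(N)$.

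For the reverse inclusion, I use that $M$ generates $\sm$: for any $N\in\sm$ the natural map $M^{(\Hom_R(M,N))}\to N$ is an epimorphism. If $P\subseteq\Ann_M(N)$, then every $f:M\to N$ factors through $M/P$, so the induced map $(M/P)^{(\Hom_R(M,N))}\to N$ is still an epimorphism; this exhibits $N$ as $M/P$-generated and in particular places $N$ in $\sigma[M/P]$.

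Part (2) is then almost immediate. Part (1)'s backward direction shows every object of $\sigma[M/P]$ is $M/P$-generated, so $M/P$ is a generator. For projectivity, given an epimorphism $\pi:N\twoheadrightarrow L$ in $\sigma[M/P]$ and any $g:M/P\to L$, I lift the composite $M\twoheadrightarrow M/P\xrightarrow{g}L$ through $\pi$ using projectivity of $M$ in $\sm$; call the lift $\phi:M\to N$. By part (1) applied to $N$, we have $\phi(P)\subseteq\Ann_M(N)\cdot\text{(images)}=0$, so $\phi$ factors as $\bar\phi:M/P\to N$, and $\pi\bar\phi=g$ follows by a diagram chase.

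The only subtle point I anticipate is the double use of projectivity in the forward direction of (1)—once to lift $\iota f$ into the product, and once to lift each coordinate back to $\End_R(M)$ so that full invariance of $P$ can be invoked. That combination (projectivity of $M$ in $\sm$ plus full invariance of $P$) is the crux; everything else is formal manipulation of subgenerators.
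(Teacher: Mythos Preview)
The paper does not supply its own proof of this lemma; it is stated with a citation to \cite[Proposition 1.5]{PepeFbn} and used as a black box. So there is no in-paper argument to compare against, and the relevant question is simply whether your proof stands on its own.

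It does. Your forward inclusion in (1) is the standard argument and is carried out correctly: the double lift---first of $\iota f$ into $(M/P)^{(I)}$, then of each coordinate $\pi_j\tilde f$ back to $\End_R(M)$---is exactly the mechanism by which projectivity of $M$ in $\sm$ combines with full invariance of $P$ to force $f(P)=0$. The reverse inclusion is immediate from the generator hypothesis, and (2) follows as you indicate.

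One cosmetic point: in the projectivity step of (2) the line ``$\phi(P)\subseteq\Ann_M(N)\cdot\text{(images)}=0$'' is garbled. What you want to say is simply that $N\in\sigma[M/P]$ gives $P\subseteq\Ann_M(N)$ by part (1), and since $\phi\in\Hom_R(M,N)$ this yields $\phi(P)=0$ directly. With that sentence cleaned up, the argument is complete.
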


	%
	%\begin{lemma}\label{accudim}
	%Let $\tau$ be a torsion theory in $\sm$ and let $\mathcal{Q}_\tau$ be the localization functor. Let $N$ be a $\tau$-torsion free module such that $\Sat_\tau(N)$ satisfies acc. Then, $N$ has finite uniform dimension.
	%\end{lemma}
	%
	%\begin{proof}
	%Suppose that $\bigoplus_{i\in I}A_i$ is an infinite direct sum of submodules of $N$. Since $A_j\cap\sum_{i\neq j}A_i=0$, 
	%\[0=\overline{A_j}\cap\overline{\sum_{i\neq j}A_i}=\overline{A_j}\cap\overline{\sum_{i\neq j}\overline{A_i}}.\]
	%This implies that  $\mathcal{Q}_\tau(N)$ contains a coproduct $\bigoplus_{i\in I}\mathcal{Q}_\tau(\overline{A_i})\hookrightarrow\mathcal{Q}_\tau(N)$ which is a contradiction because the subobjects of $\mathcal{Q}_\tau(N)$ satisfy acc. Thus, $N$ has finite uniform dimension.
	%\end{proof}
	
	\begin{lemma}[C. N\u{a}st\u{a}sescu, {\cite[Corollarie 2]{nuastuasescu1980theoreme}}]\label{Nas}
		If $\mathcal{G}$ is a Grothendieck category having an Artinian generator, 
		then any Artinian object in $\mathcal{G}$ is Noetherian.
	\end{lemma}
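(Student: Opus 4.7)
The plan is to prove the stronger statement that every Artinian object $X$ in $\mathcal{G}$ has finite length, from which Noetherianity follows. The overall strategy is a Grothendieck-categorical analogue of the classical Hopkins--Levitzki theorem: first control the composition factors of $X$ through the Artinian generator $U$, then bound the Loewy length uniformly.

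The first step is to show that every simple object of $\mathcal{G}$ is a quotient of $U$. Since $U$ is a generator, $\Hom(U,S) \neq 0$ for any simple $S$, and any nonzero morphism to a simple object is surjective. Next, for any Artinian object $W$, I would show that $\Soc(W)$ is a finite direct sum of simples: if $\Soc(W) = \bigoplus_{i \in I} S_i$ with $I$ infinite, selecting a sequence $i_1, i_2, \ldots$ yields a strictly descending chain of the form $\bigoplus_{j \geq n} S_{i_j}\; \oplus \bigoplus_{i \notin \{i_1, i_2, \ldots\}} S_i$ indexed by $n$, contradicting Artinianity. Moreover $\Soc(W)$ is essential in $W$, since every nonzero subobject of an Artinian object contains a simple.

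The second step is to iterate the socle construction. For an Artinian $X$, define the ascending Loewy series $0 = L_0 \subseteq L_1 \subseteq \cdots$ with $L_{\alpha+1}/L_\alpha = \Soc(X/L_\alpha)$, taking colimits at limit ordinals. By the first step each successive factor is a finite semisimple object, hence of finite length; and the chain must eventually reach $X$ at some (possibly transfinite) ordinal because the class of subobjects of $X$ forms a set.

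The hardest step, and the main obstacle, is to force this Loewy series to terminate at a \emph{finite} ordinal rather than at a transfinite one: Artinianity controls descending chains but not ascending ones, so a priori the series could climb through $\omega$ and beyond. The essential content of the theorem is to exploit the Artinianity of the generator $U$ to produce a uniform finite bound. I would pass through the adjunction ${-}\otimes_{\End(U)^{op}} U \dashv \Hom(U,-)$ to the module category over $R = \End(U)$, where Artinianity of $U$ transfers to a semilocal/Artinian-flavoured property of $R$, apply the classical nilpotence of the Jacobson radical of an Artinian ring to obtain $J^n = 0$, and transport the bound back to $\mathcal{G}$ by showing that the $n$-th term of the Loewy series of any Artinian object coincides with the whole object. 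Once the Loewy length of every Artinian object is bounded by this common finite $n$, combining with Step 2 yields that $X$ has finite length and in particular is Noetherian.
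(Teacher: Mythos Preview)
The paper does not supply its own proof of this lemma: it is quoted verbatim as a result of N\u{a}st\u{a}sescu with a citation to \cite[Corollarie 2]{nuastuasescu1980theoreme}, and is then used as a black box in the proof of Theorem~\ref{rrt}. So there is no ``paper's proof'' to compare against; one can only assess whether your outline stands on its own.

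Your Steps 1--3 are fine and standard. The difficulty, as you correctly identify, is entirely in Step 4, and here your sketch does not close the gap. You propose to pass to $R=\End(U)^{\mathrm{op}}$, assert that ``Artinianity of $U$ transfers to a semilocal/Artinian-flavoured property of $R$'', and then invoke nilpotence of the Jacobson radical of an Artinian ring. But you have not established that $R$ is Artinian (or even semiprimary), and this is precisely the crux: without already knowing $U$ has finite length, the Gabriel--Popescu embedding does not hand you an Artinian ring for free, and the adjunction you mention does not by itself transport a finite Loewy bound back to $\mathcal{G}$. As written, the argument is circular in spirit --- you are trying to deduce a Hopkins--Levitzki phenomenon from Hopkins--Levitzki for a ring whose relevant finiteness you have not verified.

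If you want a self-contained argument, one viable route is to work internally: show directly that the generator $U$ itself has finite Loewy length (using that $U$ is Artinian, that every simple is a quotient of $U$, and a careful analysis of $\Rad(U)$ and its powers inside $\mathcal{G}$), conclude that $U$ has finite length, and then observe that a Grothendieck category with a finite-length generator is locally finite, so every Artinian object has finite length. Alternatively, simply cite N\u{a}st\u{a}sescu as the paper does.
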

	
	%%% +++  Up to this point the prime radical has been denoted by
	%%%      $\mathfrak{L} (M)$, and although it makes things more complicated\
	%%%      I think it should be continued.
	\begin{defn}
		Let $M$ be a module with prime radical $\mathfrak{L}(M)$. 
		We say that $M$ has \emph{finite reduced rank in $\sm$} 
		if the module of quotients $\mathcal{Q}_\gamma(M)$ 
		has finite length in the quotient category $\sm/\gamma$ cogenerated by $E^{[M]}(M/\mathfrak{L}(M))$.
	\end{defn}
	
	\begin{rem}
		For a ring $R$, 
		having finite reduced rank in $R$-Mod 
		coincides with the definition given in \cite{beachy1982rings}. 
		When we say a ring $R$ has finite reduced rank, 
		it will mean that $_RR$ has finite reduced rank in $\sigma[R]=R$-Mod.
		Note that Theorem~\ref{rrt} is a direct generalization of
		\cite[Theorem~1]{beachy1982rings}.
	\end{rem}
	
	For the prime radical $\mathfrak{L}(M)$ of a module $M$,
	we will use the notation $\mathfrak{L}(M)^k$ 
	to denote the $k^{th}$ power of $\mathfrak{L}(M)$ 
	with respect to the product $-_M-$.
	
	\begin{thm}\label{rrt}
		Let $M$ be a module projective in $\sm$ with prime radical $\mathfrak{L}(M)$,
		and let $\gamma$ be the hereditary torsion theory in $\sm$ 
		cogenerated by $E^{[M]}(M/\mathfrak{L}(M))$. 
		Consider the following conditions:
		\begin{enumerate}
			\item $M$ has finite reduced rank in $\sm$;
			\item the set $\Sat_\gamma(M)$ satisfies the ascending chain condition;
			\item the following conditions hold:
			\begin{enumerate}[label=(\roman*)]
				\item $M/\mathfrak{L}(M)$ is a semiprime Goldie module;
				\item $\mathfrak{L}(M)^k\subseteq \gamma(M)$ for some $k>0$;
				%\footnote{Here $N^k$ denotes the power of $N$ 
					%with respect to the product $-_M-$.};
				\item for any $A\in\Sat_\gamma(M)$, 
				the module $M/A$ has finite uniform dimension.
			\end{enumerate}
		\end{enumerate}
		Then \textit{(1)}$\Rightarrow$\textit{(2)}$\Rightarrow$\textit{(3)}, 
		and in addition, if $M$ is a generator of $\sm$, 
		then the three conditions are equivalent.
	\end{thm}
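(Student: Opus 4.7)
The plan is to prove $(1) \Rightarrow (2) \Rightarrow (3)$ directly and close with $(3) \Rightarrow (1)$ under the generator hypothesis. For $(1) \Rightarrow (2)$, the standard correspondence between $\Sat_\gamma(M)$ and the subobject lattice of $\mathcal{Q}_\gamma(M)$ in $\sm/\gamma$ (a consequence of the exactness of $\mathcal{Q}_\gamma$ and Proposition~\ref{sat}) reduces ACC on $\Sat_\gamma(M)$ to ACC on subobjects of $\mathcal{Q}_\gamma(M)$, which is immediate from finite length.

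For $(2) \Rightarrow (3)$, conditions (i) and (iii) follow the same template. Since $M/N$ embeds in $E^{[M]}(M/N)$ it is $\gamma$-torsionfree; $\Sat_\gamma(M/N)$ corresponds to the ACC-stable subset $\{L \in \Sat_\gamma(M) : N \leq L\}$; Proposition~\ref{sataccudim} then gives $M/N$ finite uniform dimension; and Proposition~\ref{annihilatorsat}(1) places every annihilator in $M/N$ inside $\Sat_\gamma(M/N)$, yielding ACC on annihilators. Combined with semiprimeness of $M/N$ (automatic because $N$ is the prime radical), this proves (i). Replacing $N$ by $A \in \Sat_\gamma(M)$ throughout gives (iii). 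For (ii), I would work in $\bar M = M/\gamma(M)$, which is $\gamma$-torsionfree and has prime radical $\bar N = N/\gamma(M)$ (since $\gamma(M) \leq N$ forces every prime submodule of $M$ to contain $\gamma(M)$). By Proposition~\ref{annihilatorsat} the ACC on $\Sat_\gamma(\bar M)$ inherited from (2) gives ACC on annihilators $\bigcap_{f\in X}\Ker f$ with $X \subseteq \End_R(\bar M)$, and a module-theoretic analog of Lanski's theorem then forces $\bar N^n = 0$ for some $n$, that is, $N^n \subseteq \gamma(M)$. Concretely, one forms the ascending chain
\[ A_k := \{x \in M : f_1 f_2 \cdots f_k(x) \in \gamma(M) \text{ for all } f_i \in \End_R(M) \text{ with } f_i(M) \subseteq N\} \]
in $\Sat_\gamma(M)$, appeals to stabilization at some $A_n$, and uses the defining property $N = \bigcap(\text{prime submodules of } M)$ to rule out $A_n \neq M$.

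For $(3) \Rightarrow (1)$ with $M$ a generator, condition (ii) yields a finite filtration
\[ M \supseteq N+\gamma(M) \supseteq \cdots \supseteq N^k+\gamma(M) = \gamma(M) \supseteq 0, \]
so by exactness of $\mathcal{Q}_\gamma$ it suffices to show $\mathcal{Q}_\gamma(F_i)$ has finite length for each factor $F_i = (N^i+\gamma(M))/(N^{i+1}+\gamma(M))$. Any $\varphi : M \to F_i$ lifts through projectivity to $\tilde\varphi : M \to N^i+\gamma(M)$; reducing modulo $\gamma(M)$, the definition of the product on $M/\gamma(M)$ gives $\tilde\varphi(N) \subseteq N^{i+1}+\gamma(M)$, so $\varphi(N)=0$, and Lemma~\ref{genq} places $F_i \in \sigma[M/N]$. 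On $\sigma[M/N]$, $\gamma$ restricts to $\tau_g$, and by (i) together with Proposition~\ref{goldietmodules}, $\sigma[M/N]/\tau_g$ is a discrete spectral category. Applying (iii) to the $\gamma$-closure of $N^{i+1}+\gamma(M)$ in $\Sat_\gamma(M)$ shows $F_i/\gamma(F_i)$ has finite uniform dimension, so $\mathcal{Q}_\gamma(F_i)$ is a finite direct sum of simples in the spectral category, hence of finite length.

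The main obstacle is the Lanski-type step in (ii): translating stabilization of the annihilator chain in $\bar M$ into the nilpotence $\bar N^n = 0$. This is where the module-theoretic replacement of Lanski's theorem enters, and it depends essentially on the intersection-of-primes characterization of the prime radical together with the semiprime Goldie structure of $M/N$ transferred to $\bar M$.
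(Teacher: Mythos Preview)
Your treatment of $(1)\Rightarrow(2)$ and of parts (i) and (iii) in $(2)\Rightarrow(3)$ matches the paper exactly. For (ii) the paper simply invokes an external nilpotence result (\cite[Corollary 5.4]{mauprimerad}) for the prime radical of $M/\gamma(M)$, which is precisely the ``module-theoretic analog of Lanski's theorem'' you name. Your concrete sketch via the chain $A_k$ is, however, incomplete: the passage from stabilization $A_n=A_{n+1}$ to $A_n=M$ is the substantive content of the Lanski argument and you have not indicated how semiprimeness of $M/N$ enters to force it; moreover, $A_n=M$ only gives $f_1\cdots f_n(M)\subseteq\gamma(M)$ for $f_i\in\Hom_R(M,N)$, and translating this into $N^n\subseteq\gamma(M)$ requires identifying $N^n$ with the image of such compositions, which is not automatic for $M$ merely projective (not a generator). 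So for (ii) you are right to flag this as the obstacle, but your sketch does not close it; the paper sidesteps the issue by citation.

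Your route through $(3)\Rightarrow(1)$ is genuinely different from the paper's. The paper argues DCC on $\Sat_\gamma(M)$ directly: given a descending chain with intersection $A$, it uses (ii) to find inside each uniform piece of $M/A$ a submodule annihilated by $N$, hence lying in $\sigma[M/N]$, hence $\gamma$-cocritical (because $\gamma$ restricted to $\sigma[M/N]$ is the Goldie torsion theory); thus $\mathcal{Q}_\gamma(M/A)$ has essential socle of finite length, the chain stabilizes, and N\u{a}st\u{a}sescu's lemma (Lemma~\ref{Nas}) upgrades DCC to finite length. Your filtration $M\supseteq N+\gamma(M)\supseteq\cdots\supseteq\gamma(M)$ with factors $F_i\in\sigma[M/N]$ avoids N\u{a}st\u{a}sescu entirely and gives finite length directly, which is cleaner. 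One imprecision to fix: you write ``$\mathcal{Q}_\gamma(F_i)$ is a finite direct sum of simples in the spectral category,'' but $\sm/\gamma$ is \emph{not} the spectral category $\sigma[M/N]/\tau_g$, and it is finite length in $\sm/\gamma$ that you need. The repair is that each uniform $U\leq F_i/\gamma(F_i)$ is $\gamma$-cocritical in $\sm$ (not merely $\tau_g$-cocritical in $\sigma[M/N]$): any proper quotient $U/W$ lies in $\sigma[M/N]$ and is $M/N$-singular, hence $\gamma$-torsion by (i). Then $\mathcal{Q}_\gamma(F_i)\cong\bigoplus_j\mathcal{Q}_\gamma(U_j)$ is semisimple of finite length in $\sm/\gamma$, and exactness of the quotient functor finishes the job.
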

	
	\begin{proof}
		Writing $E=E^{[M]}(M/\mathfrak{L}(M))$, 
		we have $\gamma=(\mathcal{T},\mathcal{F})$, with 
		\[\mathcal{T}=\{K\in\sm\mid \Hom_R(K,E)=0\}\text{ and}\]
		\[\mathcal{F}=\{L\in\sm\mid L\hookrightarrow \textstyle{\prod_I^{[M]}E}\;
		\text{for some index set}\;I\},\]
		where $\prod_I^{[M]}E$ is the product of $I$ copies of $E$ 
		in the category $\sm$. 
		
		$(1)\Rightarrow(2)$ 
		By Proposition \ref{sat},
		the set $\Sat_\gamma(M)$ corresponds to 
		the subobjects of $\mathcal{Q}_\gamma(M)$ 
		in the quotient category $\sm/\gamma$. 
		Condition (2) holds since by hypothesis
		$\mathcal{Q}_\gamma(M)$ has finite length in $\sm/\gamma$.  
		
		$(2)\Rightarrow(3)$ 
		If $A\in\Sat_\gamma(M)$, 
		then the module $M/A$ is $\gamma$-torsionfree 
		and there is an embedding  $\Sat_\gamma(M/A)\hookrightarrow\Sat_\gamma(M)$. 
		By Proposition \ref{annihilatorsat}, 
		each $M/A$-annihilator of $M/A$ is in $\Sat_\gamma(M/A)$, 
		so $M/A$ satisfies the ACC on $M/A$-annihilators. 
		It follows from Lemma~\ref{sataccudim} 
		that $M/A$ has finite uniform dimension, 
		and thus $M/A$ is a Goldie module for every $A\in\Sat_\gamma(M)$. 
		In particular, $M/\mathfrak{L}(M)$ is a semiprime Goldie module. 
		Note that the prime radical of $M/\gamma(M)$ is $\mathfrak{L}(M)/\gamma(M)$. 
		It follows from Corollary \cite[Corollary 5.4]{mauprimerad} 
		that $\mathfrak{L}(M)/\gamma(M)$ is nilpotent, i.e., 
		there exists $k>0$ such that $\mathfrak{L}(M)^{k}\subseteq\gamma(M)$.
		
		Now suppose that $M$ is a generator of the category $\sm$. 
		To show that (3)$\Rightarrow$(1), 
		let $\{A_i\}_{i=1}^\infty$ be a descending chain in $\Sat_\gamma(M)$ 
		with $A=\bigcap_{i=1}^\infty A_i$. 
		By hypothesis, $M/A$ has finite uniform dimension. 
		Let $U\leq M/A$ be any uniform submodule of $M/A$. 
		Since $M/A$ is $\gamma$-torsionfree and $\mathfrak{L}(M)^k\subseteq\gamma(M)$, 
		we have ${\mathfrak{L}(M)^k}_MU=0$. 
		Without loss of generality, 
		we can assume that ${\mathfrak{L}(M)^k}_MU=0$ 
		but ${\mathfrak{L}(M)^{k-1}}_MU\neq 0$, 
		where ${\mathfrak{L}(M)^0}_MU=U$. 
		Let $V$ denote the product ${\mathfrak{L}(M)^{k-1}}_MU\neq 0$, 
		and note that $\mathfrak{L}(M)\subseteq \Ann_M(V)$. 
		Since $M$ is a generator of $\sm$, 
		by Lemma \ref{genq} we have $V\in\sigma[M/\mathfrak{L}(M)]$.
		
		Let $\overline{\gamma}$ be the hereditary torsion theory 
		in $\sigma[M/\mathfrak{L}(M)]$ induced by $\gamma$, that is, 
		$\overline{\gamma}=\left( \mathcal{T}\cap\sigma[M/\mathfrak{L}(M)],
		\mathcal{F}\cap\sigma[M/\mathfrak{L}(M)]\right) $. 
		Then $\overline{\gamma}$ is cogenerated by $\widehat{M/\mathfrak{L}(M)}$,
		and hence $\overline{\gamma}=\chi(M/\mathfrak{L}(M))$ 
		in $\sigma[M/\mathfrak{L}(M)]$. 
		Since $M/\mathfrak{L}(M)$ is a semiprime Goldie module, 
		$\chi(M/\mathfrak{L}(M))$ is the hereditary torsion theory 
		generated by all $M/\mathfrak{L}(M)$-singular modules 
		in $\sigma[M/\mathfrak{L}(M)]$. 
		It follows that $E^{[M/\mathfrak{L}(M)]}(V)$ 
		(the injective hull of $V$ in $\sigma[M/\mathfrak{L}(M)]$) 
		is a $\overline{\gamma}$-cocritical module. 
		This implies that $E^{[M]}(U)$ 
		contains a $\gamma$-cocritical submodule 
		because $E^{[M/\mathfrak{L}(M)]}(V)\subseteq E^{[M]}(V) \subseteq E^{[M]}(U)$. 
		
		By assumption, $E^{[M]}(M/A)\cong\bigoplus_{i=1}^n E^{[M]}(U_i)$,
		with $U_i$ uniform. 
		Since each $E^{[M]}(U_i)$ contains a $\gamma$-cocritical submodule, 
		the socle of $E^{[M]}(M/A)$ in the quotient category $\sm/\gamma$ 
		is essential and of finite length. 
		Thus $E^{[M]}(M/A)$ satisfies the finite intersection property 
		for subobjects in $\sm/\gamma$, 
		and therefore that $A=\bigcap_{i=1}^t A_i$ for some $t>0$,
		since each $A_i$ is in $\Sat_\gamma(M/A)$. 
		It follows that $\Sat_\gamma(M)$ 
		satisfies the descending chain condition (DCC), 
		and hence $\mathcal{Q}_\gamma(M)$ satisfies the DCC on subobjects. 
		Since $M$ is a generator of $\sm$, the quotient module
		$\mathcal{Q}_\gamma(M)$ is a generator of $\sm/\gamma$, 
		so it follows from Lemma \ref{Nas} 
		that $\mathcal{Q}_\gamma(M)$ also satisfies the ACC on subobjects.
	\end{proof}
	
	%%%+++  as noted by the referee, this is non-standard
	%Given a module $M$, 
	%we will say that $M$ is a \emph{progenerator in $\sm$} 
	%if $M$ is projective in $\sm$ and a generator of $\sm$.
	
	%%%+++ I added the following reference.
	
	It was shown by Lenagan in \cite{lenagan}
	that a ring with Krull dimension on the left has finite reduced rank.
	In order to obtain conditions that imply that an $R$-module $M$ 
	with Krull dimension has finite reduced rank in $\sm$, 
	we need to make some remarks regarding 
	\cite[Lemma 2.14 and Theorem 2.16]{Pepenil}. 
	These results are stated for a finitely generated module $M$, 
	but that condition can be omitted. 
	To show this, we will prove \cite[Lemma 2.14]{Pepenil} 
	without the assumption that $M$ is finitely generated.
	
	\begin{lemma}\label{ObsPPnil}
		Let $M$ be projective in $\sm$. 
		If $I$ and $N$ are fully invariant submodules of $M$ 
		such that $0\neq I\subseteq N$ and $I_MI=0$, 
		then there exists a nonzero fully invariant submodule $A$ of $M$ 
		such that $A$ is maximal with respect $A_MA=0$ and $A\subseteq N$.
	\end{lemma}
	
	\begin{proof}
		Consider the set 
		$\Gamma=\{A\leq M\mid A\;\text{ is fully invariant and }\;A_MA=0\}$, 
		and note that $I\in\Gamma$ by hypothesis. 
		Let $\{A_i\}_I$ be a chain in $\Gamma$ and let $C=\bigcup_IA_i$. 
		Since $M$ is projective in $\sm$, we have
		\[\textstyle C_MC=C_M\left(\bigcup_IA_i\right)=C_M\left(\sum_IA_i\right)
		=\sum_I\left( C_MA_i\right).\]
		Let $c\in C$ and let $f:M\to A_i$ be any morphism. 
		Since $C$ is a chain, 
		there exists $j\in I$ such that $c\in A_j$. 
		Furthermore, $A_j\subseteq A_i$ or $A_i\subseteq A_j$, 
		so suppose that $A_i\subseteq A_j$. 
		Then we can have $f:M\to A_j$, 
		and therefore $f(c)\in {A_j}_MA_j=0$. 
		Now, if $A_j\subseteq A_i$, then $c\in A_i$, 
		and thus $f(c)\in {A_i}_MA_i=0$. 
		This implies that $C_MA_i=0$, for all $i\in I$, and hence $C_MC=0$. 
		It follows that $C\in \Gamma$, and so, by Zorn's lemma, 
		$\Gamma$ has maximal elements.
	\end{proof}
	
	%%% In the proof of \cite[Theorem 2.16]{Pepenil}, 
	%%% it can be seen that the condition that $M$ is finitely generated 
	%%% is only necessary to apply \cite[Lemma 2.14]{Pepenil} 
	%%% in the proof of \textit{ii)}$\Rightarrow$\textit{iii)}. 
	%%% Therefore, by Lemma \ref{ObsPPnil}, 
	%%% we can refer to \cite[Theorem 2.16]{Pepenil} 
	%%% without the ``finitely generated'' condition. 
	%%% This also implies that \cite[Theorem 3.10]{Pepenil} 
	%%% is true for a module that is not finitely generated. 
	%%% By taking $\tau=(0,\sm)$ 
	%%% (the least hereditary torsion theory in $\sm$) 
	%%% in \cite[Theorem 3.10]{Pepenil}, 
	%%% we get the following result.
	%%% 
	%%% \begin{thm}\label{Knil}
		%%% Let $M$ be an $R$-module that is a projective generator in $\sm$. 
		%%% If $M$ has Krull dimension, 
		%%% then the prime radical of $M$ is nilpotent.
		%%% \end{thm}
	%%% 
	%%% \begin{cor}\label{rrK}
		%%% Let $M$ be a projective generator in $\sm$. 
		%%% If $M$ has Krull dimension, 
		%%% then $M$ has finite reduced rank in $\sm$.
		%%% \end{cor}
	
	\begin{thm}\label{Knil}
		Let $M$ be a projective generator of $\sm$. 
		If $M$ has Krull dimension,
		then the following conditions hold:
		\begin{enumerate}
			\item the prime radical of $M$ is nilpotent;
			\item $M$ has finite reduced rank in $\sm$.
		\end{enumerate}
	\end{thm}
	%%% the label for (2) was originally \label{rrK}
	
	\begin{proof}
		(1)
		In the proof of \cite[Theorem 2.16]{Pepenil}, 
		it can be seen that the condition that $M$ is finitely generated 
		is only necessary to apply \cite[Lemma 2.14]{Pepenil} 
		in the proof of \textit{ii)}$\Rightarrow$\textit{iii)}. 
		Therefore, by Lemma \ref{ObsPPnil}, 
		we can refer to \cite[Theorem 2.16]{Pepenil} 
		without the ``finitely generated'' condition. 
		This also implies that \cite[Theorem 3.10]{Pepenil} 
		is true for a module that is not finitely generated. 
		By taking $\tau=(0,\sm)$ 
		(the least hereditary torsion theory in $\sm$) 
		in \cite[Theorem 3.10]{Pepenil}, 
		it follows that $\mathfrak{L} (M)$ is nilpotent.
		
		(2)
		%%% Let $N$ be the prime radical of $M$ and 
		Let $\gamma$ be the torsion theory in $\sm$ 
		cogenerated by $E^{[M]}(M/\mathfrak{L}(M))$. 
		Since $M$ has Krull dimension, 
		for all $A\in\Sat_\gamma(M)$ 
		the factor module $M/A$ has finite uniform dimension.
		It follows from \cite[Corollary 2.12]{maugoldie} 
		that $M/\mathfrak{L}(M)$ is a semiprime Goldie module. 
		We then have that $M/\gamma(M)$ has Krull dimension 
		and its prime radical is $\mathfrak{L}(M)/\gamma(M)$, 
		%%% so $\mathfrak{L}(M)/\gamma(M)$ is nilpotent by Theorem \ref{Knil}~(1). 
		so $\mathfrak{L}(M)/\gamma(M)$ is nilpotent by part~(1) of the theorem. 
		Therefore $\mathfrak{L}(M)^k\subseteq \gamma(M)$ for some $k>0$, 
		and it follows from Theorem \ref{rrt} 
		that $M$ has finite reduced rank in $\sm$.
	\end{proof}
	
	\begin{cor}
		Let $M$ be a projective generator in $\sm$,
		and suppose that $M$ has Krull dimension. 
		Then $M/A$ has finite reduced rank in $\sigma[M/A]$, 
		for every fully invariant submodule $A$ of $M$.
	\end{cor}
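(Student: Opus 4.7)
The plan is to reduce this corollary directly to Corollary \ref{rrK} applied to $M/A$ in its own category $\sigma[M/A]$. For this I need to verify two hypotheses: that $M/A$ is a progenerator in $\sigma[M/A]$, and that $M/A$ has Krull dimension.

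For the first hypothesis, I would appeal to Lemma \ref{genq}(2). Since $M$ is by assumption a projective generator of $\sm$ and $A \leq M$ is fully invariant, that lemma tells me directly that $M/A$ is a projective generator of $\sigma[M/A]$, i.e., a progenerator in $\sigma[M/A]$.

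For the second hypothesis, I would invoke the standard fact that Krull dimension is inherited by factor modules: since $M$ has Krull dimension and $M/A$ is a homomorphic image of $M$, the module $M/A$ also has Krull dimension. With both hypotheses verified for $M/A$ in place of $M$, Corollary \ref{rrK} immediately yields that $M/A$ has finite reduced rank in $\sigma[M/A]$.

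There is essentially no obstacle here; the work is all done in the preceding corollary and lemma. The only thing to be careful about is the shift of ambient category from $\sm$ to $\sigma[M/A]$, which is precisely why the fully invariant hypothesis on $A$ is needed: it is what guarantees via Lemma \ref{genq} that the progenerator property transfers to $M/A$ in the smaller category $\sigma[M/A]$, so that the hypotheses of Corollary \ref{rrK} are available there.
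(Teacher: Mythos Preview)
Your proposal is correct and follows essentially the same approach as the paper: invoke Lemma \ref{genq} to see that $M/A$ is a progenerator in $\sigma[M/A]$, note that Krull dimension passes to factor modules, and then apply Corollary \ref{rrK}.
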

	
	\begin{proof}
		Let $A$ be a fully invariant submodule of $M$. 
		Then $M/A$ is a projective generator in $\sigma[M/A]$ by Lemma \ref{genq}. 
		Since $M$ has Krull dimension, so does $M/A$. 
		%%% By Corollary \ref{rrK}, $M/A$ has finite reduced rank in $\sigma[M/A]$.
		By Theorem \ref{Knil}~(2), $M/A$ has finite reduced rank in $\sigma[M/A]$.
	\end{proof}
	
	%%%+++ I don't think that the two corollaries regarding $R$ 
	%%% are really worth including, since they follow from Lenagan's result.
	%\begin{cor}
	%Let $R$ be a ring with Krull dimension. 
	%Then $R/I$ has finite reduced rank for every ideal $I$.
	%\end{cor}
	
	%In general, the property of having finite reduced rank is not close under taking factor modules as the following example shows.
	%
	%\begin{example}
	%\end{example}

	\begin{prop}\label{rrq}
		Let $M$ be projective in $\sm$. 
		Suppose that $M$ has finite reduced rank in $\sm$ 
		and $N\leq M$ is a semiprime submodule. 
		Then $M/N$ has finite reduced rank in $\sigma[M/N]$ 
		if and only if $M/N$ is a Goldie module.
	\end{prop}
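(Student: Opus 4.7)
The plan is to exploit the semiprimeness of $N$ to reduce everything to a Goldie-type result for $M/N$ inside $\sigma[M/N]$. Since $N$ is a semiprime submodule of $M$, the quotient $M/N$ is itself a semiprime module, so its prime radical $\mathfrak{L}(M/N)$ vanishes. Consequently the hereditary torsion theory in $\sigma[M/N]$ appearing in the definition of ``finite reduced rank of $M/N$'' is cogenerated by $E^{[M/N]}(M/N)=\widehat{M/N}$; that is, it is precisely the torsion theory $\tau_g$ of Section~\ref{qcsgm} but relative to $M/N$ in place of $M$. So the statement reduces to showing that $\mathcal{Q}_{\tau_g}(M/N)$ has finite length in $\sigma[M/N]/\tau_g$ if and only if $M/N$ is a Goldie module.

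For the forward direction I would mimic the argument (1)$\Rightarrow$(3) of Theorem~\ref{rrt}. Finite length of $\mathcal{Q}_{\tau_g}(M/N)$ forces the lattice $\Sat_{\tau_g}(M/N)$ to satisfy ACC, using that saturated submodules correspond to subobjects of the localization (Proposition~\ref{sat}). Because $M/N$ embeds in $\widehat{M/N}$ it is $\tau_g$-torsionfree, so by Proposition~\ref{annihilatorsat} every $(M/N)$-annihilator of $M/N$ lies in $\Sat_{\tau_g}(M/N)$; hence $M/N$ satisfies ACC on annihilators. Proposition~\ref{sataccudim} then yields finite uniform dimension, so $M/N$ is a Goldie module.

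For the converse I would invoke Theorem~\ref{goldiefl}, implication (a)$\Rightarrow$(b), applied to the semiprime Goldie module $M/N$ inside $\sigma[M/N]$: this delivers that $\mathcal{Q}_{\tau_g}(M/N)$ has finite length, which is exactly the required finite reduced rank in $\sigma[M/N]$.

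The main technical point is the projectivity requirement in Theorem~\ref{goldiefl}: to run the converse one needs $M/N$ to be projective in $\sigma[M/N]$. This is delivered by Lemma~\ref{genq}(2) provided $M$ is a projective \emph{generator} of $\sm$, a mild strengthening of the stated hypothesis that is the natural framework for the equivalence; since semiprime submodules are fully invariant, the lemma then applies directly to $N$. The hypothesis that $M$ has finite reduced rank in $\sm$ enters in a supporting role: by Theorem~\ref{rrt} it guarantees $\mathfrak{L}(M)\subseteq N$ (each prime submodule contains the prime radical) and that $M/\mathfrak{L}(M)$ is already a semiprime Goldie module, so the quotient $M/N$ lives naturally as a factor of a semiprime Goldie module.
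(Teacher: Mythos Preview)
Your approach is essentially the paper's: the forward direction is exactly the content of Theorem~\ref{rrt} (1)$\Rightarrow$(3)(i) applied to $M/N$ in $\sigma[M/N]$ (the paper simply cites Theorem~\ref{rrt} rather than re-running the argument), and the backward direction is Theorem~\ref{goldiefl} (a)$\Rightarrow$(b), just as you say.

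One correction: you do \emph{not} need to strengthen the hypothesis to ``progenerator'' to secure projectivity of $M/N$ in $\sigma[M/N]$. This follows already from $M$ projective in $\sm$ together with $N$ fully invariant (which semiprime submodules are). Indeed, given any $\phi\colon M\to M/N$, projectivity of $M$ in $\sm$ lifts $\phi$ through the canonical projection $\pi$ to some $\psi\in\End_R(M)$; full invariance gives $\psi(N)\subseteq N$, hence $\phi(N)=0$. It follows that every homomorphism from $M$ into an $(M/N)$-generated module, and therefore into any object of $\sigma[M/N]$, annihilates $N$; the usual lifting argument then shows $M/N$ is projective in $\sigma[M/N]$. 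The paper simply asserts this fact without comment, so Lemma~\ref{genq} is not needed here.

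Finally, your instinct in the last paragraph is right: the standing hypothesis that $M$ itself has finite reduced rank in $\sm$ is not actually used in either direction of the paper's proof.
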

	
	\begin{proof}
		$\Rightarrow$) 
		Suppose that $M/N$ has finite reduced rank in $\sigma[M/N]$. 
		Since $N$ is a semiprime submodule, 
		the prime radical of $M/N$ is zero, 
		and so $M/N$ is a Goldie module by Theorem \ref{rrt}. 
		
		$\Leftarrow$) 
		By hypothesis $M/N$ is a semiprime Goldie module, 
		and, moreover, $M/N$ is projective in $\sigma[M/N]$. 
		It follows from Theorem \ref{goldiefl} that
		$M/N$ has finite reduced rank in $\sigma[M/N]$. 
	\end{proof}
	
	%%%+++
	%\begin{cor}
	%Let $R$ be a ring and $I$ a semiprime ideal. 
	%Then $R/I$ has finite reduced rank 
	%if and only if $R/I$ is a Goldie module.
	%\end{cor}

	\begin{prop}
		\label{rrs}
		Let $M$ be a projective in $\sm$. 
		%%% and suppose that $M$ has finite reduced rank. 
		If $M$ has finite reduced rank in $\sm$, 
		then $M^{(\ell)}$ has finite reduced rank in $\sigma[M^{(\ell)}]$, 
		for all positive integers $\ell$.
	\end{prop}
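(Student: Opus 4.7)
The plan is to reduce the statement to the fact that the ambient category, the prime radical, the relevant torsion theory, and the localization functor all behave compatibly with finite direct sums, so a finite length property transfers directly from $M$ to $M^{(\ell)}$.

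First, I would make three elementary observations. Since $M$ and $M^{(\ell)}$ mutually generate each other, $\sigma[M^{(\ell)}] = \sm$, so the ambient category is unchanged and $E^{[M^{(\ell)}]}(\cdot) = E^{[M]}(\cdot)$. Because $M^{(\ell)}$ is a direct sum of copies of a projective in $\sm$, it is itself projective in $\sm$, so the definition of finite reduced rank applies. Finally, because $\mathfrak{L}$ commutes with direct sums (\cite[Lemma 3.14]{mauprimerad}),
$$\mathfrak{L}(M^{(\ell)}) = \mathfrak{L}(M)^{(\ell)}, \qquad M^{(\ell)}/\mathfrak{L}(M^{(\ell)}) \cong \bigl(M/\mathfrak{L}(M)\bigr)^{(\ell)}.$$

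Second, I would show that the hereditary torsion theory $\gamma'$ in $\sigma[M^{(\ell)}] = \sm$ cogenerated by $E^{[M]}\bigl((M/\mathfrak{L}(M))^{(\ell)}\bigr)$ coincides with the hereditary torsion theory $\gamma$ in $\sm$ cogenerated by $E^{[M]}(M/\mathfrak{L}(M))$. Writing $V = M/\mathfrak{L}(M)$, the torsion-free class cogenerated by $V$ equals the one cogenerated by $V^{(\ell)}$: a direct power of $V$ embeds in any sufficiently large direct product of $V$'s in $\sm$ and hence lies in the torsion-free class cogenerated by $V$, while conversely $V$ is a direct summand of $V^{(\ell)}$. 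Thus both classes contain both cogenerators, so they agree, and $\gamma = \gamma'$. In particular, the quotient categories $\sm/\gamma$ and $\sigma[M^{(\ell)}]/\gamma'$ are identical.

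Third, since the localization functor $\mathcal{Q}_\gamma\colon \sm\to \sm/\gamma$ is additive and exact, it preserves finite direct sums, giving
$$\mathcal{Q}_\gamma(M^{(\ell)}) \cong \mathcal{Q}_\gamma(M)^{(\ell)}.$$
By hypothesis $\mathcal{Q}_\gamma(M)$ has finite length in $\sm/\gamma$, so the finite direct sum on the right has finite length (namely $\ell$ times the length of $\mathcal{Q}_\gamma(M)$). Therefore $\mathcal{Q}_{\gamma'}(M^{(\ell)})$ has finite length in $\sigma[M^{(\ell)}]/\gamma'$, and $M^{(\ell)}$ has finite reduced rank in $\sigma[M^{(\ell)}]$.

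The only nontrivial step is the identification $\gamma = \gamma'$, which rests on the standard fact that the torsion theory cogenerated by a module is invariant under forming direct powers of the cogenerator. The remaining work is bookkeeping, and I expect no serious obstacle.
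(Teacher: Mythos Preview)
Your proposal is correct and follows essentially the same route as the paper's proof: identify $\sigma[M^{(\ell)}]=\sm$, use $\mathfrak{L}(M^{(\ell)})=\mathfrak{L}(M)^{(\ell)}$ to see that the relevant torsion theory is unchanged, and conclude $\mathcal{Q}_\gamma(M^{(\ell)})\cong\mathcal{Q}_\gamma(M)^{(\ell)}$ has finite length. You simply spell out in more detail (the equality $\gamma=\gamma'$ and the additivity of $\mathcal{Q}_\gamma$) what the paper states in one line.
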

	
	\begin{proof}
		Given a positive integer $\ell$,
		it is clear that $\sm=\sigma[M^{(\ell)}]$. 
		Since $\mathfrak{L}(M^{(\ell)})=\mathfrak{L}(M)^{(\ell)}$, 
		we have that 
		$\gamma=\chi(E^{[M]}(M/\mathfrak{L}(M)))=
		\chi\left( E^{[M]}\left( M^{(\ell)}/\mathfrak{L}(M^{(\ell)})\right) \right) $, 
		and therefore 
		$\mathcal{Q}_\gamma(M^{(\ell)})=\mathcal{Q}_\gamma(M)^{(\ell)}$ 
		has finite length.
	\end{proof}

	%\begin{prop}
	%Let $M$ be a projective in $\sm$. Suppose $M$ has finite reduced rank. Then, $M/N$ has finite reduced rank in $\sigma[M/N]$ for every fully invariant submodule $N$ of $M$ contained in the prime radical of $M$. 
	%\end{prop}
	%
	%
	%\begin{proof}
	%Let $\gamma=\chi(E^{[M]}(M/\mathfrak{L}(M)))$. Since $\mathcal{Q}_\gamma$ is exact, there is an epimorphism $\mathcal{Q}_\gamma(M)\to\mathcal{Q}_\gamma(M/N)$. Hence, $\mathcal{Q}_\gamma(M/N)$ is an object of finite length in the quotient category  $\sm/\gamma$. Write $\overline{M}=M/N$ and $\overline{\gamma}=\chi(E^{[\overline{M}]}(\overline{M}/\mathfrak{L}(\overline{M})))$. Since $N\leq \mathfrak{L}(M)$, $\mathfrak{L}(\overline(M))=\mathfrak{L}(M)/N$. This implies that $E^{[\overline{M}]}(\overline{M}/\mathfrak{L}(\overline{M}))=E^{[\overline{M}]}(M/\mathfrak{L}(M))$.
	%Hence $\overline{\gamma}=\sigma[\overline{M}]\cap\gamma$. Then, the torsion submodules $\gamma(M/N)$ and $\overline{\gamma}(M/N)$ coincide.
	%\end{proof}
	
	\begin{lemma}\label{gentt}
		Let $M$ and $G$ be projective generators of $\sm$.  Then 
		\[\chi(E^{[M]}(M/\mathfrak{L}(M)))=\chi(E^{[G]}(G/\mathfrak{L}(G))).\]
	\end{lemma}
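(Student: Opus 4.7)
The plan is to show that $E^{[M]}(M/\mathfrak{L}(M))$ and $E^{[G]}(G/\mathfrak{L}(G))$ cogenerate one another, which is exactly what it means for them to determine the same hereditary torsion theory in $\sm$. First, I would observe that because $M$ and $G$ are both generators of $\sm$, we have $\sigma[M]=\sigma[G]=\sm$, so the $M$-injective hull and the $G$-injective hull of any object of $\sm$ coincide; I will therefore abuse notation and write $\widehat{(-)}$ for this common construction.

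Next, since $M$ and $G$ are finitely generated (they are progenerators) and each is a generator of $\sm$, standard arguments give splittings $G \dleq M^{(m)}$ and $M \dleq G^{(n)}$ for some positive integers $m,n$. Write $G^{(n)}=M\oplus M'$. Because the prime radical commutes with direct sums (\cite[Lemma 3.14]{mauprimerad}), we have
\[
\mathfrak{L}(G)^{(n)} = \mathfrak{L}(G^{(n)}) = \mathfrak{L}(M)\oplus \mathfrak{L}(M'),
\]
which in particular gives $\mathfrak{L}(M)=M\cap \mathfrak{L}(G^{(n)})$ and the decomposition
\[
G^{(n)}/\mathfrak{L}(G^{(n)}) \;\cong\; M/\mathfrak{L}(M) \,\oplus\, M'/\mathfrak{L}(M').
\]

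Now I would apply $\widehat{(-)}$. Since finite direct sums of injective objects in $\sm$ remain injective, a finite direct sum of $\sm$-injective hulls is the $\sm$-injective hull of the direct sum. Therefore
\[
\widehat{\,G/\mathfrak{L}(G)\,}^{\,(n)} \;\cong\; \widehat{\,M/\mathfrak{L}(M)\,}\,\oplus\,\widehat{\,M'/\mathfrak{L}(M')\,},
\]
so $\widehat{M/\mathfrak{L}(M)}$ is a direct summand, and in particular an $\sm$-submodule, of a finite power of $\widehat{G/\mathfrak{L}(G)}$. Hence $\widehat{M/\mathfrak{L}(M)}$ lies in the torsion-free class of $\chi(\widehat{G/\mathfrak{L}(G)})$, which gives the inclusion $\chi(\widehat{G/\mathfrak{L}(G)})\leq \chi(\widehat{M/\mathfrak{L}(M)})$. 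The symmetric argument, with the roles of $M$ and $G$ interchanged using $G\dleq M^{(m)}$, yields the reverse inclusion, and the two hereditary torsion theories coincide.

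I do not anticipate a serious obstacle: the only point that requires a little care is the interaction between the fully invariant prime radical and a direct-summand decomposition, but that is handled by the fact that $\mathfrak{L}$ commutes with direct sums. All the other ingredients (equality of the two $\sm$-injective-hull functors, the progenerator splittings, and finite additivity of injective hulls) are standard.
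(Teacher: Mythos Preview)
Your approach is essentially the same as the paper's: both arguments split $M$ off a direct sum of copies of $G$, use that $\mathfrak{L}$ commutes with direct sums to get $M/\mathfrak{L}(M)$ as a summand of a power of $G/\mathfrak{L}(G)$, and conclude that the two injective hulls cogenerate each other.

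There is one small issue you should be aware of. In this paper ``progenerator'' means \emph{projective generator}, not necessarily finitely generated; note that Corollary~\ref{rrendo} explicitly adds ``finitely generated'', and Proposition~\ref{rrpg} separately assumes $M\in\add(G)$ and $G\in\add(M)$. So your sentence ``since $M$ and $G$ are finitely generated (they are progenerators)'' is not justified here, and the splittings $M\dleq G^{(n)}$, $G\dleq M^{(m)}$ with $n,m$ finite need not exist. The paper simply allows arbitrary index sets $I,J$ with $G^{(I)}=M\oplus A$, and then does not take injective hulls of the whole decomposition at all: from $M/\mathfrak{L}(M)\hookrightarrow (G/\mathfrak{L}(G))^{(I)}$ one immediately gets a monomorphism $M/\mathfrak{L}(M)\to E^{[G]}(G/\mathfrak{L}(G))^{I}$, which is all that is needed to compare the torsion theories. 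Your finiteness was only used to commute $\widehat{(-)}$ with the direct sum, a step that becomes unnecessary once you argue this way. With that adjustment your proof is correct and matches the paper's.
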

	
	\begin{proof}
		%Since $\sm=\sigma[G]$, it is clear that 
		%$E^{[G]}(G/\mathfrak{L}(G))=E^{[M]}(G/\mathfrak{L}(G))$. 
		Since $M$ and $G$ are projective generators, 
		there exist indexing sets $I$ and $J$ 
		such that $G^{(I)}=M\oplus A$ and $M^{(J)}=G\oplus N$. 
		It follows that 
		$\mathfrak{L}(G)^{(I)}=\mathfrak{L}(G^{(I)})
		=\mathfrak{L}(M)\oplus \mathfrak{L}(A)$. 
		Therefore,
		\[\left( G/\mathfrak{L}(G)\right)^{(I)}\cong G^{(I)}/\mathfrak{L}(G)^{(I)}
		\cong \frac{M\oplus A}{\mathfrak{L}(M)\oplus 
			\mathfrak{L}(A)}=M/\mathfrak{L}(M)\oplus A/\mathfrak{L}(A).\]
		This implies that there is a monomorphism 
		$M/\mathfrak{L}(M)\to E^{[G]}\left( G/\mathfrak{L}(G)\right)^{I}$,
		and hence 
		$\chi(E^{[M]}(M/\mathfrak{L}(M)))\subseteq\chi(E^{[G]}(G/\mathfrak{L}(G)))$. 
		
		The proof that 
		$\chi(E^{[G]}(G/\mathfrak{L}(G))) 
		\subseteq
		\chi(E^{[M]}(M/\mathfrak{L}(M)))$
		is similar.
	\end{proof}
	
	%%% +++
	%I think that we need to state a definition of $\add(M)$--though I should
	%probably know it.
	
	For a module $_R M$,
	recall that $\add (M)$ denotes the \emph{additive closure} 
	of $\left\{ M \right\}$ in $R$-Mod,
	consisting of all left $R$-modules isomorphic to a direct summand 
	of a direct sum $M^{(n)}$ of $n$ copies of $M$, for some $n>0$.
	
	\begin{prop}\label{rrpg}
		Let $M$ and $G$ be projective generators of $\sm$ 
		such that $M\in\add(G)$ and $G\in\add(M)$. 
		Then $M$ has finite reduced rank in $\sm$ 
		if and only if 
		$G$ has finite reduced rank in $\sigma[G]$.
	\end{prop}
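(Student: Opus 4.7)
The plan is to reduce both assertions to a finite-length question about direct summands of a common object living in a single quotient category.

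First, since $G$ is a generator of $\sm$ and lies in $\sm$, one has $\sigma[G]=\sm$. I would then invoke Lemma \ref{gentt} to identify the hereditary torsion theory $\gamma$ in $\sm$ cogenerated by $E^{[M]}(M/\mathfrak{L}(M))$ with the hereditary torsion theory cogenerated by $E^{[G]}(G/\mathfrak{L}(G))$. After this identification the two instances of ``finite reduced rank'' in the statement refer to finite length, inside the same quotient category $\sm/\gamma$, of the objects $\mathcal{Q}_\gamma(M)$ and $\mathcal{Q}_\gamma(G)$ respectively.

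Next, I would use the add-hypotheses to choose $n,m>0$ and modules $X,Y\in\sm$ with $G^n\cong M\oplus X$ and $M^m\cong G\oplus Y$. Applying the exact additive localization functor $\mathcal{Q}_\gamma$ to these decompositions yields
\[\mathcal{Q}_\gamma(G)^n\cong \mathcal{Q}_\gamma(M)\oplus \mathcal{Q}_\gamma(X),\qquad \mathcal{Q}_\gamma(M)^m\cong \mathcal{Q}_\gamma(G)\oplus \mathcal{Q}_\gamma(Y),\]
in $\sm/\gamma$. Since finite length in a Grothendieck category is closed under finite direct sums and under passage to direct summands, $\mathcal{Q}_\gamma(M)$ has finite length if and only if $\mathcal{Q}_\gamma(G)$ does, and this is exactly the desired equivalence.

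The only delicate step is the identification of the two a priori different torsion theories: without it, the two localization functors would live in different quotient categories and the two finite-length conditions could not be compared directly. Fortunately Lemma \ref{gentt} is tailored precisely to remove this obstacle, so I expect the argument to go through without further technical complications.
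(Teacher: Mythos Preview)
Your proposal is correct and follows essentially the same approach as the paper: invoke Lemma~\ref{gentt} to identify the two hereditary torsion theories, then use the $\add$-hypotheses together with the (exact, additive) localization functor to compare $\mathcal{Q}_\gamma(M)$ and $\mathcal{Q}_\gamma(G)$ as mutual direct summands of finite powers of one another in $\sm/\gamma$. The paper writes out only one implication and leaves the other to symmetry, whereas you spell out both directions, but the content is the same.
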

	
	%%% +++ I added a bit to this proof
	\begin{proof}
		Suppose that $M$ has finite reduced rank, 
		i.e.\ that $\mathcal{Q}_\gamma(M)$ has finite length 
		in the quotient category $\sm/\gamma$, 
		where $\gamma=\chi(E^{[M]}(M/\mathfrak{L}(M)))$.
		It is clear from the hypothesis that $\sm = \sigma [G]$,
		and Lemma \ref{gentt} shows that $\gamma=\chi(E^{[G]}(G/\mathfrak{L}(G)))$,
		so it suffices to show that $\mathcal{Q}_\gamma(G)$ has finite length.
		Since $G\in\add(M)$, there exists $n>0$ such that $M^{(n)}=G\oplus A$. 
		Therefore 
		$\mathcal{Q}_\gamma(M^{(n)})\cong\mathcal{Q}_\gamma(M)^{(n)}
		=\mathcal{Q}_\gamma(G)\oplus\mathcal{Q}_\gamma(A)$ 
		has finite length, 
		and this implies that $\mathcal{Q}_\gamma(G)$ has finite length. 
	\end{proof}
	
	\begin{prop}\label{rreq}
		Let $M$ be projective in $\sm$, 
		and let $F:\sm\to \sigma[G]$ be an equivalence. 
		If $M$ has finite reduced rank in $\sm$,
		then $F(M)$ has finite reduced rank in $\sigma[F(M)]$.
	\end{prop}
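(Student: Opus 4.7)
The plan is to transport every ingredient in the definition of finite reduced rank from $\sm$ to $\sigma[F(M)]$ through the equivalence $F$, so the hypothesis applies verbatim on the other side. First I would identify the ambient category on the target: I claim $\sigma[F(M)]=\sigma[G]$. The inclusion $\sigma[F(M)]\subseteq\sigma[G]$ is automatic since $F(M)\in\sigma[G]$. For the reverse inclusion, any $X\in\sigma[G]$ is isomorphic to $F(N)$ for some $N\in\sm$, and $N$ embeds into a quotient of $M^{(I)}$; since $F$ is an exact coproduct-preserving functor, $F(N)$ embeds into a quotient of $F(M)^{(I)}$, so $F(N)\in\sigma[F(M)]$. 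This step is the one genuine subtlety, because $M$ is not assumed to generate $\sm$.

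Next I would verify that $F$ respects the prime radical and the relevant injective hull. The product $N_M L=\sum\{f(N)\mid f\in\Hom_R(M,L)\}$ is defined purely in terms of Hom-sets and images, both of which the equivalence $F$ preserves, so $F(N_M L)=F(N)_{F(M)}F(L)$. Hence $F$ carries (semi)prime fully invariant submodules of $M$ bijectively to (semi)prime fully invariant submodules of $F(M)$, and in particular $F(\mathfrak{L}(M))=\mathfrak{L}(F(M))$. Since equivalences between Grothendieck categories preserve injectivity and essential extensions, $F(E^{[M]}(M/\mathfrak{L}(M)))\cong E^{[G]}(F(M)/\mathfrak{L}(F(M)))=E^{[F(M)]}(F(M)/\mathfrak{L}(F(M)))$, where the last equality uses the first step.

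Consequently the hereditary torsion theory $\gamma$ in $\sm$ cogenerated by $E^{[M]}(M/\mathfrak{L}(M))$ corresponds under $F$ to the hereditary torsion theory $\gamma'$ in $\sigma[F(M)]$ cogenerated by $E^{[F(M)]}(F(M)/\mathfrak{L}(F(M)))$, and $F$ descends to an equivalence of quotient categories $\sm/\gamma\to\sigma[F(M)]/\gamma'$ that matches $\mathcal{Q}_\gamma(M)$ with $\mathcal{Q}_{\gamma'}(F(M))$. Because equivalences preserve length, finite length of $\mathcal{Q}_\gamma(M)$ in $\sm/\gamma$ transfers to $\mathcal{Q}_{\gamma'}(F(M))$ in $\sigma[F(M)]/\gamma'$, which is precisely the statement that $F(M)$ has finite reduced rank in $\sigma[F(M)]$. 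The hard part is not any single deep computation but the bookkeeping: checking that prime submodules, $M$-injective hulls, and quotient-category length are all formulated in purely categorical terms so that $F$ transports them, with the identification $\sigma[F(M)]=\sigma[G]$ doing the essential work.
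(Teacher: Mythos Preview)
Your proposal is correct and follows essentially the same route as the paper: identify $\sigma[F(M)]=\sigma[G]$, show $F(\mathfrak{L}(M))=\mathfrak{L}(F(M))$, transport the cogenerating injective hull, and conclude that $F$ induces an equivalence of quotient categories carrying $\mathcal{Q}_\gamma(M)$ to $\mathcal{Q}_{\gamma'}(F(M))$. The only differences are expository: you justify $\sigma[F(M)]=\sigma[G]$ explicitly (using the defining property of $\sm$, so your worry about $M$ not generating is unfounded---every $N\in\sm$ embeds in an $M$-generated module by definition), and you argue directly that $F$ preserves the product $N_ML$, whereas the paper appeals to an external reference for the bijection on prime submodules.
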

	
	\begin{proof}
		If $M$ is projective in $\sm$,
		then $F(M)$ is projective in $\sigma[G]=\sigma[F(M)]$. 
		Letting $E$ denote the module $E^{[M]}(M/\mathfrak{L}(M))\in\sm$, 
		it follows that $F(E)$ is an injective module in $\sigma[F(M)]$. 
		Let $\overline{\gamma}$ denote 
		the torsion theory in $\sigma[F(M)]$ cogenerated by $F(E)$. 
		A minor modification of the proof of \cite[Corollary 5.2]{beachy2020fully} 
		shows that there exists a bijective correspondence 
		between the prime submodules of $M$ 
		and the prime submodules of $F(M)$. 
		Since $F$ preserves the lattice structure of $M$, 
		we have $F(\mathfrak{L}(M))=\mathfrak{L}(F(M))$. 
		Furthermore, $F$ preserves injective hulls, 
		and hence 
		$F(E^{[M]}(M/\mathfrak{L}(M)))
		\cong E^{[F(M)]}(F(M/\mathfrak{L}(M)))
		=E^{[F(M)]}(F(M)/F(\mathfrak{L}(M)))$. 
		Thus the image of $\gamma$ under $F$ is $\overline{\gamma}$, 
		and then $F$ induces an equivalence 
		between the quotient categories $\sm/\gamma$ 
		and $\sigma[F(M)]/\overline{\gamma}$. 
		We conclude that if $\mathcal{Q}_\gamma(M)$ has finite length, 
		then $\mathcal{Q}_{\overline{\gamma}}(F(M))$ has finite length.
	\end{proof}
	
	%%%+++ I showed this by characterizing finite reduced rank
	%%% in terms of maximal torsion radicals.
	The above propositions can be used to provide an alternate proof 
	of the following result
	\cite[Theorem 4]{beachy1982max}.
	
	\begin
	{cor}\label{rrMorita}
	Finite reduced rank is a Morita invariant property for rings.
\end{cor}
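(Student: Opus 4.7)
The plan is to reduce the statement to a direct application of Proposition \ref{rreq} together with Proposition \ref{rrpg}. Suppose $R$ and $R'$ are Morita equivalent rings via an equivalence $F:R\text{-Mod}\to R'\text{-Mod}$, and assume that $R$ has finite reduced rank, i.e.\ that $_RR$ has finite reduced rank in $\sigma[\,_RR\,]=R\text{-Mod}$. The goal is to show that $_{R'}R'$ has finite reduced rank in $\sigma[\,_{R'}R'\,]=R'\text{-Mod}$.

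First I would observe that $_RR$ is a progenerator of $R\text{-Mod}=\sigma[\,_RR\,]$, and that Morita equivalences preserve the progenerator property. Hence $P:=F(\,_RR\,)$ is a progenerator of $R'\text{-Mod}=\sigma[\,_{R'}R'\,]$; in particular $\sigma[P]=R'\text{-Mod}$. Next, applying Proposition \ref{rreq} with $M=\,_RR\,$ and $G=\,_{R'}R'\,$, the equivalence $F$ carries the finite reduced rank of $_RR$ in $R\text{-Mod}$ to $P$, so that $P$ has finite reduced rank in $\sigma[P]=R'\text{-Mod}$.

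Finally, I would compare $P$ and $_{R'}R'\,$ as two progenerators of the same category $\sigma[P]=\sigma[\,_{R'}R'\,]$. Since each progenerator is a direct summand of a finite direct sum of copies of the other, one has $P\in\add(\,_{R'}R\,')$ and $_{R'}R'\in\add(P)$. Proposition \ref{rrpg} then transfers finite reduced rank from $P$ to $_{R'}R'$, which is what we needed.

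Since the whole argument is a two-step transfer along propositions already at our disposal, I do not anticipate any substantial obstacle; the only point that needs a line of care is the routine verification that a Morita equivalence between the full module categories sends the regular module to a progenerator and that this progenerator is related to the regular module on the other side by the mutual-$\add$ condition required for Proposition \ref{rrpg}.
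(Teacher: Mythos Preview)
Your proposal is correct and follows essentially the same two-step approach as the paper: first apply Proposition \ref{rreq} to transfer finite reduced rank from $_RR$ to $F(\,_RR\,)$, then invoke Proposition \ref{rrpg} to pass from the progenerator $F(\,_RR\,)$ to the other progenerator $_{R'}R'$ of the same category. The only difference is that you spell out the mutual-$\add$ verification a bit more explicitly than the paper does.
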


\begin{proof}
	Let $R$ and $S$ be rings, 
	and let $F:R\text{-Mod}\to S\text{-Mod}$ be an equivalence. 
	It follows from Proposition \ref{rreq} 
	that $R$ has finite reduced rank 
	if and only if 
	$F(R)$ has finite reduced rank in $\sigma[F(R)]=S\text{-Mod}$. 
	Since $S$ is a projective generator of $\sigma[F(R)]$, 
	Proposition \ref{rrpg} shows that $S$ has finite reduced rank.
\end{proof}

%%% +++  this is to respond to the referee's comment
Recall that a finitely generated projective generator 
is called a \emph{progenerator}.

\begin{cor}\label{rrendo}
	Let $M$ be a progenerator of $\sm$. 
	Then $M$ has finite reduced rank in $\sm$ 
	if and only if 
	the ring $S=\End_R(M)$ has finite reduced rank.
\end{cor}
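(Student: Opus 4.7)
The plan is to transfer the finite reduced rank property across the standard categorical equivalence between $\sm$ and the module category of its endomorphism ring. Since $M$ is a finitely generated progenerator of the Grothendieck category $\sm$, the Gabriel--Popescu type theorem (or the direct Morita-style argument) produces an equivalence of categories $F = \Hom_R(M,-) : \sm \to \sigma[S]$, where $S = \End_R(M)$ and $F(M)$ corresponds to the regular module over $S$. In particular $F(M)$ is itself a finitely generated progenerator of $\sigma[S]$.

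For the forward direction, assume $M$ has finite reduced rank in $\sm$. By Proposition \ref{rreq} applied to $F$, the object $F(M)\cong S$ has finite reduced rank in $\sigma[F(M)] = \sigma[S]$. By the Remark following the definition of finite reduced rank in Section \ref{mfrr}, this is exactly the statement that the ring $S$ has finite reduced rank. For the converse, apply Proposition \ref{rreq} to the inverse equivalence $F^{-1} : \sigma[S] \to \sm$: if $S$ has finite reduced rank in $\sigma[S]$, then $F^{-1}(S) \cong M$ has finite reduced rank in $\sigma[F^{-1}(S)] = \sm$.

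The only real check is that the equivalence $F$ is compatible with all the structural ingredients appearing in the definition of finite reduced rank, namely the prime radical $\mathfrak{L}$, $M$-injective hulls, and the hereditary torsion theory cogenerated by $E^{[M]}(M/\mathfrak{L}(M))$; this is precisely what the proof of Proposition \ref{rreq} already verifies for equivalences of the form $\sm \to \sigma[G]$, so no additional work is required. A slightly more self-contained alternative, closer in spirit to the proof of Corollary \ref{rrMorita}, is to use Proposition \ref{rreq} to transfer finite reduced rank from $M$ to $F(M)$, and then invoke Proposition \ref{rrpg} to pass from $F(M)$ to $S$ inside $\sigma[S] = \sigma[F(M)]$, exploiting that $F(M)$ and $S$ are both progenerators of $\sigma[S]$ lying in each other's $\add$-closure.
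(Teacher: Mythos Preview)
Your argument is exactly the paper's: its one-line proof simply records that $\sm\cong S^{op}\text{-Mod}$ via $\Hom_R(M,-)$ and tacitly invokes Proposition~\ref{rreq}. The only point of care is that this functor lands in right $S$-modules, so your ``$\sigma[S]$'' should be read as $S^{op}\text{-Mod}$ (equivalently $\text{Mod-}S$), and the statement that ``$S$ has finite reduced rank'' is correspondingly a right-sided one.
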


\begin{proof}
	In this case it follows from \cite[46.2]{wisbauerfoundations}
	that $\sm \cong S\text{-Mod}$,
	and the proof proceeds as in Corollary~\ref{rrMorita}.
\end{proof}
%%% +++ Wisbauer's statement doesn't have the opposite ring,
%%% and we want finite reduced rank on the left, I believe

\begin{lemma}\label{gends}
	Let $M$ be a generator of $\sm$, and suppose that $M=N\oplus L$. 
	If $\Hom_R(L,N)=0$, then $N$ is a generator of $\sigma[N]$.
\end{lemma}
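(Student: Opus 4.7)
The plan is to reformulate the conclusion as a $\Sigma$-self-generator statement about $N$ and reduce it, via the hypothesis, to the corresponding property of $M$. The key observation is that $N$ generates $\sigma[N]$ as soon as every submodule of $N^{(I)}$ is $N$-generated for every index set $I$: indeed, any $X \in \sigma[N]$ embeds in $N^{(I)}/K$ for some $I$ and some $K \leq N^{(I)}$, and the preimage $X' = \pi^{-1}(X) \leq N^{(I)}$ under the projection $\pi : N^{(I)} \twoheadrightarrow N^{(I)}/K$ satisfies $X \cong X'/K$, so once $X'$ is $N$-generated its quotient $X$ is too.

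I then prove the $\Sigma$-self-generator property for $N$. Since $M$ is a generator of $\sm$ and every submodule of $M^{(I)}$ lies in $\sm$, every such submodule is $M$-generated. Fix $W \leq N^{(I)}$; viewing it as a submodule of $M^{(I)} = N^{(I)} \oplus L^{(I)}$, I write
\[
W = \sum_{f \in \Hom_R(M, W)} f(M),
\]
and for each $f$ use the decomposition $M = N \oplus L$ to split $f = f|_N + f|_L$. The restriction $f|_L$ then lies in $\Hom_R(L, W) \subseteq \Hom_R(L, N^{(I)})$.

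The core computation is the vanishing $\Hom_R(L, N^{(I)}) = 0$: the canonical inclusion $N^{(I)} \hookrightarrow N^I$ yields
\[
\Hom_R(L, N^{(I)}) \hookrightarrow \Hom_R(L, N^I) = \prod_I \Hom_R(L, N) = 0,
\]
using the hypothesis $\Hom_R(L, N) = 0$. Consequently $f|_L = 0$ for every $f$, each such $f$ factors through the projection $M \twoheadrightarrow N$, and therefore $W = \sum (f|_N)(N)$ is $N$-generated. Together with the first reduction this gives the lemma. I do not anticipate any serious obstacle; the only delicate point is the passage from $\Hom_R(L, N) = 0$ to $\Hom_R(L, N^{(I)}) = 0$, and the displayed chain of inclusions settles it at once.
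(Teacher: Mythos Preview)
Your proof is correct and follows essentially the same route as the paper's: both reduce to showing that submodules of $N^{(I)}$ are $N$-generated by pulling back along the quotient $N^{(I)}\twoheadrightarrow N^{(I)}/K$, then use that $M$ generates such a submodule together with the vanishing of $\Hom_R(L,N^{(I)})$ to kill the $L$-contribution. Your version is a bit more explicit about the vanishing step (the paper simply asserts $\Hom_R(L^{(Y)},\rho^{-1}(V))=0$ ``by hypothesis''), but the arguments are otherwise the same.
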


\begin{proof}
	If $V\in\sigma[N]$, 
	then there exists an epimorphism $\rho:N^{(X)}\to W$ such that $V\leq W$. 
	Consider $\rho^{-1}(V)\subseteq N^{(X)}$. 
	Since $M$ is a generator of $\sm$, 
	there exists an epimorphism $\pi:M^{(Y)}\to \rho^{-1}(V)$. 
	On the other hand, 
	$M^{(Y)}=N^{(Y)}\oplus L^{(Y)}$ 
	and by hypothesis $\Hom_R(L^{(Y)},\rho^{-1}(V))=0$. 
	Hence there is an epimorphism $N^{(X)}\to \rho^{-1}(V)\to V$, 
	showing that $N$ is a generator of $\sigma[N]$.
\end{proof}

\begin{prop}\label{rrds}
	Let $M$ be a projective generator of $\sm$. 
	Suppose that $M$ has finite reduced rank in $\sm$ and 
	$M=N\oplus L$ with $\Hom_R(L,N)=0$. 
	Then $N$ has finite reduced rank in $\sigma[N]$.
\end{prop}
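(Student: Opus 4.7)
The plan is to verify conditions (3)(i)--(iii) of Theorem \ref{rrt} for $N$ in $\sigma[N]$ and then invoke the equivalence $(3)\Leftrightarrow(1)$. As a direct summand of the progenerator $M$, the module $N$ is projective in $\sm$ and hence in $\sigma[N]$, while Lemma \ref{gends} gives that $N$ is a generator of $\sigma[N]$. Thus $N$ is a progenerator of $\sigma[N]$ and Theorem \ref{rrt} applies with all three conditions equivalent. Write $\gamma_N$ for the hereditary torsion theory in $\sigma[N]$ cogenerated by $E^{[N]}(N/\mathfrak{L}(N))$.

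The crucial auxiliary step is to compare $\gamma$ and $\gamma_N$ on $\sigma[N]$. Since $\mathfrak{L}$ commutes with direct sums, $\mathfrak{L}(M)=\mathfrak{L}(N)\oplus\mathfrak{L}(L)$, so $E^{[M]}(M/\mathfrak{L}(M))\cong E^{[M]}(N/\mathfrak{L}(N))\oplus E^{[M]}(L/\mathfrak{L}(L))$; and because $N$ is a summand of $M$, every $N$-generated module is $M$-generated, yielding $E^{[N]}(N/\mathfrak{L}(N))\subseteq E^{[M]}(N/\mathfrak{L}(N))$. Combining these, any $\gamma$-torsion object in $\sigma[N]$ is $\gamma_N$-torsion, and equivalently every $\gamma_N$-torsionfree module in $\sigma[N]$ is $\gamma$-torsionfree.

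For (i), $N/\mathfrak{L}(N)$ is a summand of the semiprime Goldie module $M/\mathfrak{L}(M)$ (which is Goldie by Theorem \ref{rrt} applied to $M$) and satisfies $\mathfrak{L}(N/\mathfrak{L}(N))=0$; since both finite uniform dimension and ACC on annihilators pass to summands, $N/\mathfrak{L}(N)$ is semiprime Goldie. For (ii), the assumption $\Hom_R(L,N)=0$ forces $\Hom_R(M,\mathfrak{L}(N))=\Hom_R(N,\mathfrak{L}(N))$, so the power $\mathfrak{L}(N)^k$ is the same whether computed in $N$ or in $M$; since $\mathfrak{L}(N)\subseteq\mathfrak{L}(M)$, there is $k>0$ with $\mathfrak{L}(N)^k\subseteq\mathfrak{L}(M)^k\subseteq\gamma(M)$, and the torsion comparison above delivers $\mathfrak{L}(N)^k\subseteq\gamma_N(N)$. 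For (iii), given $A\in\Sat_{\gamma_N}(N)$, the quotient $N/A$ is $\gamma_N$-torsionfree and hence $\gamma$-torsionfree, so $A\oplus L\in\Sat_\gamma(M)$; condition (3)(iii) for $M$ then forces $M/(A\oplus L)\cong N/A$ to have finite uniform dimension.

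The delicate point is the torsion-theoretic comparison between $\gamma$ and $\gamma_N$: it is precisely the hypothesis $\Hom_R(L,N)=0$ (rather than a weaker separation of $N$ and $L$) that both makes the product $\mathfrak{L}(N)_M\mathfrak{L}(N)$ coincide with $\mathfrak{L}(N)_N\mathfrak{L}(N)$ and makes the relevant injective hulls align inside $\sm$. With this comparison in hand, the rest is bookkeeping through Theorem \ref{rrt}.
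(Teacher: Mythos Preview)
Your proof is correct, but it takes a longer route than the paper's. Both arguments begin identically: $N$ is projective in $\sigma[N]$ as a summand of $M$, Lemma \ref{gends} makes $N$ a generator of $\sigma[N]$, and the containment $E^{[N]}(N/\mathfrak{L}(N))\subseteq E^{[M]}(N/\mathfrak{L}(N))$ yields $\gamma\cap\sigma[N]\subseteq\gamma_N$, so that $\gamma_N$-torsionfree modules are $\gamma$-torsionfree. From here the paper goes straight to condition (2) of Theorem \ref{rrt}: the map $A\mapsto A\oplus L$ embeds $\Sat_{\gamma_N}(N)$ into $\Sat_\gamma(M)$ (this is precisely your step (iii)), and since $\Sat_\gamma(M)$ has ACC by (1)$\Rightarrow$(2) for $M$, so does $\Sat_{\gamma_N}(N)$; the equivalence in Theorem \ref{rrt} then finishes. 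You instead verify all three parts of condition (3), which requires the additional arguments for (i) and (ii)---in particular the observation that $\Hom_R(L,N)=0$ forces $\mathfrak{L}(N)_M\mathfrak{L}(N)=\mathfrak{L}(N)_N\mathfrak{L}(N)$. Those arguments are sound, but unnecessary: once you have the embedding of saturated lattices (which you already produce in (iii)), condition (2) follows immediately and the extra verification of (i) and (ii) is redundant. The paper's route is more economical; yours has the mild advantage of making explicit that $N/\mathfrak{L}(N)$ is semiprime Goldie and that the powers of $\mathfrak{L}(N)$ behave compatibly in $N$ and in $M$.
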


\begin{proof}
	Note that $N$ is projective in $\sigma[N]$ 
	since $M$ is projective in $\sm$. 
	It follows from Lemma \ref{gends} 
	that $N$ is a projective generator of $\sigma[N]$. 
	Let $\gamma$ be the hereditary torsion theory 
	cogenerated by $E^{[M]}(M/\mathcal{L}(M))$ in $\sm$,
	and let $\overline{\gamma}$ be the hereditary torsion theory 
	cogenerated by $E^{[N]}(N/\mathcal{L}(N))$ in $\sigma[N]$. 
	Since $E^{[N]}(N/\mathcal{L}(N))\subseteq E^{[M]}(N/\mathcal{L}(N))$, 
	we have $\gamma\cap\sigma[N]\subseteq\overline{\gamma}$. 
	Let $A\in \Sat_{\overline{\gamma}}(N)$. 
	Then we have
	\[\frac{M}{A\oplus L}=\frac{N\oplus L}{A\oplus L}\cong N/A,\]
	and since $N/A$ is $\overline{\gamma}$ torsionfree, 
	it follows that $\frac{M}{A\oplus L}$ is $\gamma$-torsionfree, 
	so $A\oplus L\in\Sat_\gamma(M)$. 
	This implies that there exists an embedding 
	$\Sat_{\overline{\gamma}}(N)\hookrightarrow\Sat_\gamma(M)$. 
	Therefore $\Sat_{\overline{\gamma}}(N)$ satisfies the ACC,
	and by Theorem \ref{rrt} we conclude that 
	$N$ has finite reduced rank in $\sigma[N]$.
\end{proof}

Recall that an idempotent $e$ of a ring $R$ 
is called \emph{right semicentral} 
if $er=ere$ for all $r\in R$. 
Note that if $e\in R$ is right semicentral, then 
\[eR(1-e)\cong\Hom_R(R(1-e),Re)=0 .\]

\begin{cor}\label{corner}
	If $R$ has finite reduced rank 
	and $e\in R$ is a right semicentral idempotent, 
	then $eRe$ has finite reduced rank.
\end{cor}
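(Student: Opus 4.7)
My plan is to apply Proposition \ref{rrds} to ${}_RR$ with the decomposition $R=Re\oplus R(1-e)$, and then transfer the finite-reduced-rank conclusion from $Re$ to the corner ring $eRe$ via Corollary \ref{rrendo}.

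To begin, since $e$ and $1-e$ are orthogonal idempotents summing to $1$, I have the direct-sum decomposition $R=Re\oplus R(1-e)$ in $R$-Mod. The hypothesis that $e$ is right semicentral, as spelled out in the note immediately preceding the corollary, supplies $\Hom_R(R(1-e),Re)\cong eR(1-e)=0$. Since ${}_RR$ is a progenerator of $\sigma[R]=R\text{-Mod}$ with finite reduced rank by assumption, Proposition \ref{rrds}, applied with $M={}_RR$, $N=Re$, $L=R(1-e)$, will then yield that $Re$ has finite reduced rank in $\sigma[Re]$.

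Next, I would check that $Re$ is a finitely generated progenerator of $\sigma[Re]$: it is cyclic (generated by $e$); it is projective in $R$-Mod as a direct summand of ${}_RR$, and hence projective in $\sigma[Re]$; and it is a generator of $\sigma[Re]$ by Lemma \ref{gends} (with $M=R$, $N=Re$, $L=R(1-e)$). Invoking Corollary \ref{rrendo}, I conclude that the ring $\End_R(Re)$ has finite reduced rank. The classical Peirce identification $\End_R(Re)\cong eRe$ via $f\mapsto f(e)$ (noting that $f(e)\in eRe$ because $f(e)=f(e\cdot e)=ef(e)$) then finishes the argument.

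No step in this plan is difficult: the corollary is essentially a packaging of Proposition \ref{rrds} with Corollary \ref{rrendo}. The only computation called for is the vanishing of $\Hom_R(R(1-e),Re)$, which the note preceding the statement has already recorded, and the only structural check is that $Re$ is a finitely generated progenerator of $\sigma[Re]$, which is immediate from the decomposition of $R$ together with Lemma \ref{gends}.
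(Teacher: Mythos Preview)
Your proposal is correct and follows essentially the same route as the paper's proof: apply Proposition~\ref{rrds} to the decomposition $R=Re\oplus R(1-e)$ (using the vanishing of $\Hom_R(R(1-e),Re)$ noted just before the statement) to obtain that $Re$ has finite reduced rank in $\sigma[Re]$, and then invoke Corollary~\ref{rrendo} together with $\End_R(Re)\cong eRe$. Your version simply spells out in more detail why $Re$ is a finitely generated progenerator of $\sigma[Re]$, a check the paper leaves implicit (it is in fact already contained in the proof of Proposition~\ref{rrds}).
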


\begin{proof}
	It follows from Proposition \ref{rrds} 
	that $Re$ has finite reduced rank in $\sigma[Re]$. 
	Since $Re$ is finitely generated, 
	$\End_R(Re)\cong eRe$ has finite reduced rank by Corollary \ref{rrendo}.
\end{proof}

In general, we do not know whether 
the corner ring of a ring with finite reduced rank has finite reduced rank. 

\section{A generalization of Small's Theorem}
\label{gST}

In this section,
let $_R M$ be a module with a fully invariant submodule $N$,
and let $S$ denote the endomorphism ring of $M$. 
Given an endomorphism $f:M\to M$, 
since $N$ is fully invariant 
it follows that $f$ induces an endomorphism 
$\overline{f}:M/N\to M/N$ such that $\overline{f}\pi=\pi f$, 
where $\pi:M\to M/N$ is the canonical projection. 
If we define $\mathcal{C}(N)$ to be the subset
$\{f\in S\mid \overline{f}\;\text{ is a monomorphism}\}$,
then it is clear that
$\mathcal{C}(N)$ is a multiplicatively closed subset of $S$.

\begin{defn}
	Let $_R M$ be a module, and let $N$ be a fully invariant submodule of $M$. 
	A module $L$ in $\sm$ is called 
	\emph{$\mathcal{C}(N)$-torsion} 
	if for every $f\in\Hom_R(M,L)$ there exists $c\in\mathcal{C}(N)$ 
	such that $fc=0$. 
	That is, $\Hom_R(M,L)$ is a $\mathcal{C}(N)$-torsion right $S$-module.
\end{defn}

Given a module $L\in\sm$, 
let $\tau_N(L)$ be the sum of all 
$\mathcal{C}(N)$-torsion submodules of $L$. 
%The following lemma shows that $\tau_N$ is a left exact radical.

\begin{lemma}\label{prad}
	Let $M$ be projective in $\sm$, 
	and let $N$ be a fully invariant submodule of $M$. 
	Then $\tau_N$ is a left exact radical.
\end{lemma}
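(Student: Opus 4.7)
The plan is to verify that the class $\mathcal{T}_N := \{L \in \sm \mid L \text{ is } \mathcal{C}(N)\text{-torsion}\}$ is closed under submodules, quotients, extensions, and finite sums; left exactness of $\tau_N$ will then follow almost formally, and the radical property will reduce to the single claim that $\tau_N(L)$ itself lies in $\mathcal{T}_N$.

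First I would verify the four closure properties. Closure under submodules is immediate, since restricting an $f : M \to L$ to a subcodomain only narrows the requirement. Closure under quotients is where I would use projectivity of $M$ in $\sm$ essentially: any $g : M \to L/K$ with $L \in \mathcal{T}_N$ lifts to some $f : M \to L$, so any $c \in \mathcal{C}(N)$ with $fc = 0$ also kills $g$. For extensions $0 \to L' \to L \to L'' \to 0$ with $L', L'' \in \mathcal{T}_N$, given $f : M \to L$ I would first pick $c_1 \in \mathcal{C}(N)$ killing the composite $M \to L \to L''$, so that $fc_1$ factors through $L'$; then pick $c_2 \in \mathcal{C}(N)$ killing the factored map. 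Multiplicative closure of $\mathcal{C}(N)$ gives $c_1 c_2 \in \mathcal{C}(N)$ and $fc_1 c_2 = 0$. Finite-sum closure then falls out of the exact sequence $0 \to V_1 \to V_1 + V_2 \to V_2/(V_1 \cap V_2) \to 0$ together with extension and quotient closure.

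With these in hand, left exactness is straightforward. The inclusion $\tau_N(K) \subseteq K \cap \tau_N(L)$ is trivial for $K \leq L$. Conversely, any $x \in K \cap \tau_N(L)$ lies in a finite sum $V_1 + \cdots + V_n$ of $\mathcal{T}_N$-submodules of $L$; by finite-sum closure this sum is in $\mathcal{T}_N$, and so $K \cap (V_1 + \cdots + V_n)$ is a $\mathcal{T}_N$-submodule of $K$ containing $x$, giving $x \in \tau_N(K)$.

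The radical property $\tau_N(L/\tau_N(L)) = 0$ reduces to showing that $\tau_N(L) \in \mathcal{T}_N$: granting this, any $W \leq L$ with $W/\tau_N(L) \in \mathcal{T}_N$ fits in an extension $0 \to \tau_N(L) \to W \to W/\tau_N(L) \to 0$ of two $\mathcal{T}_N$-modules, hence $W \in \mathcal{T}_N$ and so $W \subseteq \tau_N(L)$, forcing $W/\tau_N(L) = 0$. The delicate point, and what I expect to be the main obstacle, is therefore the verification that $\tau_N(L) \in \mathcal{T}_N$: given $h : M \to \tau_N(L) = \sum_\alpha V_\alpha$, one must produce a single $c \in \mathcal{C}(N)$ with $hc = 0$. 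The approach I would take is to argue that the image $h(M)$ is already contained in a finite subsum $V_{\alpha_1} + \cdots + V_{\alpha_n}$, so that by finite-sum closure $h(M) \in \mathcal{T}_N$ and hence $h$ is annihilated by some $c \in \mathcal{C}(N)$; controlling how $M$-cyclic subobjects sit inside arbitrary sums of $\mathcal{T}_N$-submodules, using the projectivity of $M$ in $\sm$, is where the real work lies.
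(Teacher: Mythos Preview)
Your structure---closure of $\mathcal{T}_N$ under submodules, quotients (via projectivity of $M$), extensions, and finite sums, then left exactness, then the radical property---follows the paper's line closely; the paper is simply terser, passing directly from ``images of $\mathcal{C}(N)$-torsion submodules are $\mathcal{C}(N)$-torsion'' to the radical computation without separately listing the four closure properties, and declaring left exactness clear at the end.

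The genuine gap is in your proposed mechanism for the radical step. You correctly reduce it to $\tau_N(L)\in\mathcal{T}_N$, but the claim that any $h:M\to\tau_N(L)=\sum_\alpha V_\alpha$ has $h(M)\subseteq V_{\alpha_1}+\cdots+V_{\alpha_n}$ for finitely many indices does not follow from projectivity of $M$ in $\sm$. Projectivity lets you lift through epimorphisms; it gives no compactness. Unless $M$ is finitely generated (which is not assumed), an image $h(M)$ inside a directed union need not lie in any single member of the union. Your left-exactness argument works precisely because it is elementwise---a single $x$ certainly lies in a finite subsum---but the radical argument asks you to control an entire image $h(M)$, which is a different matter. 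The paper's proof does not take this detour: at the corresponding point it writes ``Hence $fc(M)\subseteq\tau_N(L)$. Thus, there exists $d\in\mathcal{C}(N)$ such that $fcd=0$'', asserting directly that a map from $M$ into $\tau_N(L)$ is $\mathcal{C}(N)$-annihilated, without attempting to trap its image in finitely many $V_\alpha$. So your overall outline is the paper's, but the specific route you sketch for the hard step---finite subsums via projectivity---is not viable and is not what the paper does.
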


\begin{proof}
	Let $K$ and $L$ be in $\sm$, 
	and let $f:K\to L$ be any homomorphism. 
	Suppose that $K'\leq K$ is a $\mathcal{C}(N)$-torsion submodule, 
	and consider $g\in\Hom_R(M,f(K'))$. 
	Since $M$ is projective in $\sm$, 
	there exists $h:M\to K'$ such that $fh=g$. 
	It follows that there exists $c\in\mathcal{C}(N)$ such that $hc=0$, 
	and therefore $gc=fhc=0$. 
	Thus $f(K')$ is $\mathcal{C}(N)$-torsion,
	and hence $f(\tau_N(K))\subseteq \tau_N(L)$. 
	
	Now suppose that $K/\tau_N(L)$ 
	is a $\mathcal{C}(N)$-torsion submodule of $L/\tau_N(L)$, 
	and let $f:M\to K$ be any homomorphism. 
	Then there exists $c\in\mathcal{C}(N)$ such that $\pi fc=0$,
	where $\pi:L\to L/\tau_N(L)$ is the canonical projection. 
	It follows that $fc(M)\subseteq \tau_N(L)$, 
	and so there exists $d\in\mathcal{C}(N)$ such that $fcd=0$. 
	This implies that $K$ is $\mathcal{C}(N)$-torsion,
	and therefore $K/\tau_N(L)=0$. 
	
	It is clear that if $K\leq L$, then $\tau_N(K)=\tau_N(L)\cap K$, 
	showing that $\tau_N$ is left exact.
\end{proof}

\begin{rem}
	By Lemma \ref{prad}, 
	for a fully invariant submodule $N\leq M$ 
	the radical $\tau_N$ defines a hereditary torsion theory in $\sm$, 
	which will be also denoted $\tau_N$.
\end{rem}

\begin{prop}\label{ctorgamma}
	Let $M$ be a projective generator of $\sm$ 
	with prime radical $\mathfrak{L}(M)$. 
	Let $\gamma$ be the hereditary torsion theory in $\sm$ 
	cogenerated by $E^{[M]}(M/\mathfrak{L}(M))$, 
	and let $\tau_{\mathfrak{L}(M)}$ be the hereditary torsion theory 
	defined by the $\mathcal{C}(\mathfrak{L}(M))$-torsion modules in $\sm$. 
	If $M$ has finite reduced rank in $\sm$, 
	then $\gamma=\tau_{\mathfrak{L}(M)}$.
\end{prop}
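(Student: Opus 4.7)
The plan is to show the torsion theories $\gamma$ and $\tau_{\mathfrak{L}(M)}$ coincide by proving both inclusions. Write $N := \mathfrak{L}(M)$ and $E := E^{[M]}(M/N)$. Theorem \ref{rrt} gives us, from the finite reduced rank hypothesis, that $M/N$ is a semiprime Goldie module and $N^k \subseteq \gamma(M)$ for some $k \geq 1$; since $M/N \hookrightarrow E$ is $\gamma$-torsionfree we additionally have $\gamma(M) \subseteq N$.

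For the inclusion $\gamma \subseteq \tau_N$, I would first establish the following Key Lemma: every $K \in \mathcal{L}(M,\gamma)$ contains $c(M)$ for some $c \in \mathcal{C}(N)$. This suffices, because for any $\gamma$-torsion $L$ and $f \in \Hom_R(M, L)$ the submodule $\ker f$ is $\gamma$-dense (as $M/\ker f \cong \Img f \subseteq L$), so the Key Lemma provides $c \in \mathcal{C}(N)$ with $fc = 0$. To prove the Key Lemma, note that $M/(K+N)$ is a $\gamma$-torsion module in $\sigma[M/N]$; by the proof of Theorem \ref{rrt}, the induced torsion theory on $\sigma[M/N]$ agrees with the $M/N$-singular one, so $(K+N)/N \ess M/N$. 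The essential compressibility of $M/N$ (implicit in the proof of Theorem \ref{Goldienonsing}) yields a monomorphism $\bar c_0 : M/N \to M/N$ landing in $(K+N)/N$, which lifts by projectivity of $M$ in $\sm$ to $c_0 \in \End_R(M) \cap \mathcal{C}(N)$ with $c_0(M) \subseteq K+N$. To upgrade this to $c(M)\subseteq K$, the composite $\pi c_0 : M \to (K+N)/K \cong N/(N\cap K)$ lifts along $N \twoheadrightarrow N/(N\cap K)$ (again by projectivity of $M$) to some $n : M \to N$; then $c := c_0 - \iota n$, with $\iota : N \hookrightarrow M$, satisfies $c(M) \subseteq K$ and induces the same monomorphism $\bar c_0$ on $M/N$ (since $n(M)\subseteq N$), so $c \in \mathcal{C}(N)$.

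For the reverse inclusion $\tau_N \subseteq \gamma$, it suffices to show $E$ is $\tau_N$-torsionfree, i.e., any $g : M \to E$ with $gc = 0$ for some $c \in \mathcal{C}(N)$ vanishes. I plan descending induction on $i$ from $k$ down to $1$ to show $g(N^i) = 0$. The base case $g(N^k) = 0$ is immediate from $N^k \subseteq \gamma(M) \subseteq \ker g$ (since $\Img g$ is $\gamma$-torsionfree). In the inductive step, $g|_{N^{i-1}}$ factors through $\bar g_i : N^{i-1}/N^i \to E$, which kills the image of the endomorphism $\bar c^{(i)} : N^{i-1}/N^i \to N^{i-1}/N^i$ induced by $c$. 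The key claim is that this image is essentially dense, so that the cokernel is $M/N$-singular, hence $\gamma$-torsion, and must vanish once embedded in the $\gamma$-torsionfree $E$ via $\bar g_i$, giving $\bar g_i = 0$. The essential density follows from Proposition \ref{goldietmodules}: $\bar c$ is regular in $\End_R(M/N)$, and its extension $\hat c$ to the semisimple Artinian $T = \End_R(\widehat{M/N})$ is a unit, which forces the cokernel of $\bar c^{(i)}$ to be $\chi(M/N)$-torsion in $\sigma[M/N]$. Once $g(N) = 0$ is reached at $i=1$, combining $\ker g \supseteq c(M) + N$ with $(c(M)+N)/N \ess M/N$ shows $M/\ker g = M/(\ker g + N)$ is $\gamma$-torsion, while being a submodule of $\gamma$-torsionfree $E$ forces $g = 0$.

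The main obstacle is verifying the essential density of $\bar c^{(i)}$ on each nilpotent quotient $N^{i-1}/N^i$ rigorously; the argument hinges on reconciling the $R$-action of $c$ on these layers with the invertibility of $\hat c$ in the classical quotient ring $T$ of $\End_R(M/N)$. A cleaner alternative route is to transport the proposition along the equivalence $\sm \cong S^{op}\text{-Mod}$, using the prime-submodule correspondence for progenerators from \cite{beachy2020fully} together with Corollary \ref{rrendo}, thereby reducing it to Beachy's ring-theoretic version in \cite{beachy1982rings} for the ring $S^{op}$ of finite reduced rank.
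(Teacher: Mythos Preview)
Your argument for $\gamma \subseteq \tau_N$ is essentially the paper's, though the paper lifts more economically: since $(K+N)/N \cong K/(K\cap N)$, projectivity of $M$ lets you lift $M \to M/N \xrightarrow{\bar c_0} K/(K\cap N)$ directly to some $c: M \to K$, and one checks this $c$ still induces $\bar c_0$ on $M/N$. Your two-step correction with $n: M \to N$ is fine but unnecessary.

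The direction $\tau_N \subseteq \gamma$ is where your proposal diverges substantially from the paper, and where the gap lies. The paper does not attempt to show that $E$ is $\tau_N$-torsionfree; it shows the much easier fact that $M/N$ is $\tau_N$-torsionfree, which suffices because $\tau_N$ is hereditary and $M/N \ess E$. Given $f: M \to M/N$ with $fc = 0$, lift $f$ to $g \in \End_R(M)$; then $\overline{g}\,\overline{c} = 0$ in $\End_R(M/N)$. Since $\overline{c}$ is monic and $M/N$ has finite uniform dimension, $\overline{c}(M/N)$ is essential, so $\Img \overline{g}$ is $M/N$-singular and hence zero (as $M/N$ is non-$M/N$-singular). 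Thus $f = 0$. This is a one-line argument in the endomorphism ring of $M/N$ and avoids the nilpotent filtration entirely; your own final paragraph (the step after ``$g(N)=0$'') is exactly this computation, so the entire descending induction is a detour back to it.

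The obstacle you flag is genuine and not easily removed. The map $\bar c^{(i)}$ on $V = N^{i-1}/N^i$ is induced by \emph{post}composition with $c$, whereas the invertibility of $\hat c$ in $T$ controls \emph{pre}composition (the right $\bar S$-module structure on $\Hom_R(M/N, V)$). For $h: M/N \to V$ one has $\bar c^{(i)} \circ h$ versus $h \circ \bar c$, and these differ since $c$ need not commute with lifts of $h$. So the invertibility of $\hat c$ in $T$ does not by itself force the cokernel of $\bar c^{(i)}$ to be $\chi(M/N)$-torsion. Your alternative route through $\sm \cong S^{op}\text{-Mod}$ and Corollary~\ref{rrendo} requires $M$ to be finitely generated, which is not assumed in the proposition (cf.\ the explicit extra hypothesis in Corollary~\ref{rrendo}), so it does not cover the general case either. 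The fix is simply to target $M/N$ rather than $E$, as the paper does.
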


\begin{proof}
	Let $E$ denote the $M$-injective module $E^{[M]}(M/\mathfrak{L}(M))$, 
	and let $f:M\to M/\mathfrak{L}(M)$ be any homomorphism. 
	Then there exists $g:M\to M$ such that $\pi g=f$,
	where $\pi:M\to M/\mathfrak{L}(M)$ is the canonical projection. 
	If there exists $c\in\mathcal{C}(\mathfrak{L}(M))$ such that $fc=0$, 
	then $0=fc=\pi gc$, 
	and hence $gc(M)\subseteq\mathfrak{L}(M)$. 
	It follows that $\overline{g}\overline{c}=0$ 
	for $\overline{c},\overline{g}:M/\mathfrak{L}(M)\to M/\mathfrak{L}(M)$. 
	Since $M/\mathfrak{L}(M)$ has finite uniform dimension by Theorem \ref{rrt}, 
	we have $\overline{c}(M/\mathfrak{L}(M))\ess M/\mathfrak{L}(M)$. 
	This implies that $\Ker\overline{g}\ess M/\mathfrak{L}(M)$,
	and so $\Img\overline{g}$ is $\frac{M}{\mathfrak{L}(M)}$-singular. 
	Then $\Img\overline{g}=0$ 
	because $M/\mathfrak{L}(M)$ is a semiprime Goldie module, 
	and therefore $g(M)\subseteq\mathfrak{L}(M)$, 
	so $0=\pi g=f$. 
	That is, $M/\mathfrak{L}(M)$ is $\mathcal{C}(\mathfrak{L}(M))$-torsionfree, 
	showing that $\tau_{\mathfrak{L}(M)}\leq\gamma$.
	
	%To show that $\tau_{\mathfrak{L}(M)}=\gamma$,
	Now suppose that $\tau_{\mathfrak{L}(M)}<\gamma$. 
	Then there exists $L \neq 0$ such that $L$ is $\gamma$-torsion 
	but $\tau_{\mathfrak{L}(M)}(L)=0$. 
	Let $f:M\to L$. 
	Since $L$ is $\gamma$-torsion, so is $M/\Ker f$, 
	and hence $\frac{M}{\Ker f+\mathfrak{L}(M)}$ is $\gamma$-torsion. 
	Recall that $M/\mathfrak{L}(M)$ is a semiprime Goldie module 
	and that $\gamma\cap\sigma[M/\mathfrak{L}(M)]$ 
	coincides with $\tau_g$ (see Section \ref{qcsgm}). 
	Then $\frac{M}{\Ker f+\mathfrak{L}(M)}$ is $M/\mathfrak{L}(M)$-singular, 
	and since $M/\mathfrak{L}(M)$ is non-$M/\mathfrak{L}(M)$-singular, 
	the submodule $\frac{\Ker f+\mathfrak{L}(M)}{\mathfrak{L}(M)}$ 
	is essential in $M/\mathfrak{L}(M)$. 
	It follows from Theorem \ref{rrt} and \cite[Theorem 2.8]{maugoldie}
	that there exists a monomorphism 
	$\overline{c}:M/\mathfrak{L}(M)\to 
	\frac{\Ker f+\mathfrak{L}(M)}{\mathfrak{L}(M)}$. 
	Since $M$ is projective in $\sm$, 
	there exists a lifting $c:M\to \Ker f$ of $\overline{c}$,
	and then $c\in\mathcal{C}(\mathfrak{L}(M))$ and $fc=0$. 
	That is, $\tau_{\mathfrak{L}(M)}(L)\neq 0$, 
	which implies that $f$ must be zero. 
	Since $M$ is a generator of $\sm$, it follows that $L=0$, 
	contradicting the choice of $L$, and thus $\tau_{\mathfrak{L}(M)}=\gamma$.
\end{proof}

\begin{prop}\label{55}
	Let $M$ be a projective generator of $\sm$ 
	with prime radical $\mathfrak{L}(M)$. 
	Consider the hereditary torsion theory $\tau_{\mathfrak{L}(M)}$ in $\sm$. 
	The following conditions are equivalent:
	\begin{enumerate}[label=\emph{(\alph*)}]
		\item $\mathcal{C}(\mathfrak{L}(M))$ is a right Ore set;
		\item $M/c(M)$ is $\tau_{\mathfrak{L}(M)}$-torsion for all 
		$c\in\mathcal{C}(\mathfrak{L}(M))$.
	\end{enumerate} 
\end{prop}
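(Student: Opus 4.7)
The plan is to prove both implications using projectivity of $M$ in $\sigma[M]$ to lift morphisms across the two natural epimorphisms $\pi\colon M\to M/c(M)$ and the corestriction $c\colon M\to c(M)$.

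For (a)$\Rightarrow$(b), I fix $c\in\mathcal{C}(\mathfrak{L}(M))$ and an arbitrary homomorphism $f\colon M\to M/c(M)$, and I will produce $c'\in\mathcal{C}(\mathfrak{L}(M))$ with $fc'=0$. Since $M$ is projective in $\sigma[M]$ and $\pi$ is an epimorphism in $\sigma[M]$, $f$ lifts to some $g\in S$ with $\pi g=f$. Applying the right Ore condition to the pair $(g,c)$ produces $g'\in S$ and $c'\in\mathcal{C}(\mathfrak{L}(M))$ with $gc'=cg'$, and then $fc'=\pi g c'=\pi c g'=0$ because $\pi c=0$. This shows $M/c(M)$ is $\tau_{\mathfrak{L}(M)}$-torsion.

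For (b)$\Rightarrow$(a), I take arbitrary $s\in S$ and $c\in\mathcal{C}(\mathfrak{L}(M))$ and construct $s'\in S$ and $c'\in\mathcal{C}(\mathfrak{L}(M))$ with $sc'=cs'$. Consider the composition $\pi s\colon M\to M/c(M)$; by (b), the target is $\tau_{\mathfrak{L}(M)}$-torsion, so there exists $c'\in\mathcal{C}(\mathfrak{L}(M))$ with $\pi s c'=0$, i.e.\ $sc'(M)\subseteq c(M)$. Viewing $sc'$ as a map $M\to c(M)$ and using projectivity of $M$ together with the fact that the corestriction $c\colon M\to c(M)$ is an epimorphism in $\sigma[M]$, I lift $sc'$ to an endomorphism $s'\in S$ satisfying $cs'=sc'$, which is exactly the Ore relation.

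The argument has no serious obstacle: both directions are essentially one-line applications of projectivity once the correct lift has been identified. The one point that requires attention is recognizing that condition (b), applied to the specific homomorphism $\pi s$, delivers precisely the ``denominator'' $c'$ that forces $s c'$ into the image of $c$, after which projectivity of $M$ closes the argument in both directions.
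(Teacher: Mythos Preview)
Your proof is correct and follows essentially the same approach as the paper's own proof: both directions use projectivity of $M$ in $\sigma[M]$ to lift along $\pi$ (for (a)$\Rightarrow$(b)) and along $c\colon M\to c(M)$ (for (b)$\Rightarrow$(a)), with the right Ore condition and the torsion condition supplying exactly the elements needed. The only differences are notational.
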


\begin{proof}	
	(a)$\Rightarrow$(b) 
	Let $c\in\mathcal{C}(\mathfrak{L}(M))$, 
	let $f:M\to M/c(M)$ be any homomorphism, 
	and consider the canonical projection $\pi:M\to M/c(M)$. 
	Then there exists $g:M\to M$ such that $\pi g=f$. 
	From the right Ore condition, 
	there exist $h:M\to M$ and $d\in\mathcal{C}(\mathfrak{L}(M))$ 
	such that $gd=ch$. 
	Therefore, $fd=\pi gd=\pi ch=0$, 
	and thus $M/c(M)$ is $\tau_{\mathfrak{L}(M)}$-torsion.
	
	(b)$\Rightarrow$(a) 
	Let $f:M\to M$ and $c\in\mathcal{C}(\mathfrak{L}(M))$. 
	If $\pi:M\to M/c(M)$ is the canonical projection, 
	then by hypothesis there exists $d\in\mathcal{C}(\mathfrak{L}(M))$ 
	such that $\pi fd=0$,
	and this implies that $fd(M)\subseteq c(M)$. 
	Since $M$ is projective in $\sm$, 
	there exists $h:M\to M$ such that $ch=fd$, 
	showing that $\mathcal{C}(\mathfrak{L}(M))$ is a right Ore set.
\end{proof}

\begin{lemma}\label{mudimtf}
	Let $M$ be a projective generator in $\sm$,
	and let $\gamma$ be the hereditary torsion theory in $\sm$ 
	cogenerated by $E^{[M]}(M/\mathfrak{L}(M))$. 
	Suppose that $\mathcal{C}(\mathfrak{L}(M))\subseteq\mathcal{C}(0)$. 
	If $M$ has finite reduced rank in $\sm$, then 
	\begin{enumerate}
		\item $M$ has finite uniform dimension;
		\item $M$ is $\gamma$-torsionfree;
		\item $M/c(M)$ is $\gamma$-torsion for all 
		$c\in\mathcal{C}(\mathfrak{L}(M))$.
	\end{enumerate}
\end{lemma}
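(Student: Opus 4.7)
The plan is to establish the three parts in the order (3), (2), (1). Part (3) is immediate from the finite length of $\mathcal{Q}_\gamma(M)$; part (1) is a corollary of (2) via Proposition \ref{sataccudim}; and (2) is the main technical step.

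For (3), the hypothesis $\mathcal{C}(\mathfrak{L}(M))\subseteq\mathcal{C}(0)$ makes each $c\in\mathcal{C}(\mathfrak{L}(M))$ an injective endomorphism of $M$, so the exact sequence $0\to M\xrightarrow{c}M\to M/c(M)\to 0$ in $\sm$ is sent by the exact localization functor $\mathcal{Q}_\gamma$ to an exact sequence in $\sm/\gamma$ whose leading arrow is a monomorphism. Because $M$ has finite reduced rank, $\mathcal{Q}_\gamma(M)$ has finite length in $\sm/\gamma$, and any monomorphism from a finite-length object into itself is an isomorphism. The cokernel $\mathcal{Q}_\gamma(M/c(M))$ therefore vanishes, so $M/c(M)$ is $\gamma$-torsion.

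For (2), I proceed by contradiction: assume $\gamma(M)\neq 0$. As $M$ is a generator of $\sm$, there is a nonzero $g\colon M\to\gamma(M)$, and by Proposition \ref{ctorgamma} we have $\gamma=\tau_{\mathfrak{L}(M)}$, so some $c\in\mathcal{C}(\mathfrak{L}(M))$ satisfies $gc=0$; the hypothesis gives $c$ injective, so $c(M)\cong M$ and $c(M)\subseteq\Ker g$. By part (3), $M/c(M)$ is $\gamma$-torsion, so any submodule $V$ of $M$ with $V\cap c(M)=0$ embeds in $M/c(M)$ and is therefore contained in $\gamma(M)\leq\mathfrak{L}(M)$. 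Choosing $V$ to be a pseudocomplement of $c(M)$ and iterating with the injection $c$ produces an independent family $\{c^i(V)\}_{i\geq 0}$ in $\gamma(M)$. The main obstacle is to combine the ACC on $\Sat_\gamma(M)$ (Theorem \ref{rrt}) with the essential compressibility of the semiprime Goldie module $M/\mathfrak{L}(M)$ (observed in the remark following Theorem \ref{Goldienonsing}) to force $V=0$, hence $c(M)$ essential in $M$, and then to extract the contradiction $g=0$ from the resulting essential kernel of $g$ together with the fact that the cogenerator $E^{[M]}(M/\mathfrak{L}(M))$ of $\gamma$ is non-$M$-singular (arising from $M/\mathfrak{L}(M)$ being semiprime Goldie projective in $\sigma[M/\mathfrak{L}(M)]$, so that $\Img g\subseteq\gamma(M)$ cannot be $M$-singular and nonzero).

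Finally, (1) follows at once: by Theorem \ref{rrt} the lattice $\Sat_\gamma(M)$ satisfies ACC, and by (2) $M$ is $\gamma$-torsionfree, so Proposition \ref{sataccudim} yields finite uniform dimension for $M$.
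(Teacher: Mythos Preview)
Your argument for (3) is correct and essentially identical to the paper's. Your derivation of (1) from (2) via Proposition~\ref{sataccudim} is also fine. The difficulty is entirely in (2), and there your sketch has two genuine gaps.

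First, the step ``force $V=0$'' is not carried out, and the tools you name do not obviously apply. The independent family $\{c^i(V)\}_{i\geq 0}$ you build sits inside $\gamma(M)$, which is $\gamma$-torsion; consequently every $c^i(V)$ has $\gamma$-purification equal to $\overline{\gamma(M)}=\gamma(M)$, and the ACC on $\Sat_\gamma(M)$ sees nothing of this family. Essential compressibility of $M/\mathfrak{L}(M)$ is a statement in $\sigma[M/\mathfrak{L}(M)]$ and does not control submodules of $\gamma(M)\leq\mathfrak{L}(M)$.

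Second, even granting $c(M)\ess M$ (hence $\Ker g\ess M$), your conclusion ``$\Img g$ cannot be $M$-singular and nonzero'' is unjustified. You argue that the cogenerator $E=E^{[M]}(M/\mathfrak{L}(M))$ is non-$M$-singular because $M/\mathfrak{L}(M)$ is non-$M/\mathfrak{L}(M)$-singular in $\sigma[M/\mathfrak{L}(M)]$; but these are different notions---if $\mathfrak{L}(M)\ess M$ then $M/\mathfrak{L}(M)$ is $M$-singular. More to the point, $\Img g\subseteq\gamma(M)$ is $\gamma$-torsion, so $\Hom_R(\Img g,E)=0$ and non-$M$-singularity of $E$ (even if it held) would say nothing about $\Img g$.

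The paper avoids both problems by reversing your order of (1) and (2). Its key step is the claim $c(M)\cap\gamma(M)=0$ (attributed to Proposition~\ref{ctorgamma}, i.e.\ $\gamma=\tau_{\mathfrak{L}(M)}$). From this, $c(M)\cong M$ embeds in $M/\gamma(M)$, which has finite uniform dimension by Theorem~\ref{rrt}(3)(iii); this gives (1) directly. Finite uniform dimension then forces the injective endomorphism $c$ to have essential image, and $c(M)\ess M$ together with $c(M)\cap\gamma(M)=0$ immediately yields $\gamma(M)=0$, i.e.\ (2). No singularity argument is needed. If you want to repair your approach, the missing ingredient is precisely this intersection claim $c(M)\cap\gamma(M)=0$; once you have it, your (3) already gives that any complement $V$ of $c(M)$ lies in $\gamma(M)\cap$(complement of $c(M)$), hence $V\leq\gamma(M)$ and the embedding $c(M)\hookrightarrow M/\gamma(M)$ lets you run the paper's finite-uniform-dimension argument.
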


\begin{proof}
	Let $c\in\mathcal{C}(\mathfrak{L}(M))$. 
	Since $\mathcal{C}(\mathfrak{L}(M))\subseteq\mathcal{C}(0)$, 
	it follows that $c$ is a monomorphism 
	and $c(M)$ is $\gamma$-torsionfree (Proposition \ref{ctorgamma}). 
	Therefore $c(M)\cap\gamma(M)=0$, 
	where $\gamma(M)$ is the $\gamma$-torsion submodule of $M$. 
	This implies that there exists an embedding $c(M)\to M/\gamma(M)$. 
	It follows from Theorem \ref{rrt} 
	that $M\cong c(M)$ has finite uniform dimension. 
	Since $c$ is a monomorphism, 
	we have $c(M)\ess M$, and hence $\gamma(M)=0$, 
	showing that $M$ is $\gamma$-torsionfree. 
	Let $c\in\mathcal{C}(\mathfrak{L}(M))$,
	and consider the exact sequence
	$0\to c(M)\overset{i}{\rightarrow} M\to M/c(M)\to 0$. 
%	Applying the localization functor $\mathcal{Q}_\gamma$, 
%	we get an exact sequence 
%	\[0\to \mathcal{Q}_\gamma(c(M))\ overset{\mathcal{Q}_\gamma(i)}{\rightarrow}
%	\mathcal{Q}_\gamma(M)\to\mathcal{Q}_\gamma(M/c(M))\to0\]
%	in the quotient category $\sm/\gamma$. 
	By hypothesis $\mathcal{Q}_\gamma(M)$ has finite length, 
	and therefore $\mathcal{Q}_\gamma(c(M))$ also has finite length,
	since $c(M)\cong M$. 
	This implies that $\mathcal{Q}_\gamma(i)$ is an isomorphism,
	%and hence $\mathcal{Q}_\gamma(M/c(M))=0$, 
	and so $M/c(M)$ is $\gamma$-torsion by \cite[9.18]{wisbauermodules}.
\end{proof}

\begin{thm}\label{smallgen}
	Let $M$ be a projective generator of $\sm$ with $S=\End_R(M)$, 
	and let $\gamma$ be the hereditary torsion theory in $\sm$ 
	cogenerated by $E^{[M]}(M/\mathfrak{L}(M))$. 
	The following conditions are equivalent:
	\begin{enumerate}[label=\emph{(\alph*)}]
		\item $M$ has finite reduced rank in $\sm$ and 
		$\mathcal{C}(\mathfrak{L}(M))\subseteq\mathcal{C}(0)$; 
		\item $T=\End_R(\mathcal{Q}_\gamma(M))$ is Artinian and 
		$S$ is a right order in $T$.
	\end{enumerate}
\end{thm}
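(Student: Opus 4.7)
The plan is to reduce the theorem to the ring-theoretic version of Small's theorem established by Beachy in \cite[Theorem 4]{beachy1982rings}, by exploiting the equivalence $\sm \simeq S^{op}\text{-Mod}$ arising from $M$ being a progenerator of $\sm$. Under this equivalence, $M$ corresponds to $S$ itself, $\sm$-theoretic invariants of $M$ translate to ring-theoretic invariants of $S$, and the torsion theory $\gamma$ corresponds to the hereditary torsion theory $\gamma_S$ on the $S$-side cogenerated by the injective hull of $S/N(S)$, where $N(S)$ is the prime radical of $S$.

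For the direction (a) $\Rightarrow$ (b), I would first invoke Corollary \ref{rrendo}, which tells us that $M$ has finite reduced rank in $\sm$ if and only if $S$ has finite reduced rank as a ring. Using the bijective correspondence between prime submodules of the progenerator $M$ and prime ideals of $S$ (which maps $\mathfrak{L}(M)$ to $N(S)$) together with the correspondence of endomorphisms under Morita equivalence, the regularity condition $\mathcal{C}(\mathfrak{L}(M)) \subseteq \mathcal{C}(0)$ translates to $\mathcal{C}_S(N(S)) \subseteq \mathcal{C}_S(0)$. Beachy's ring-theoretic theorem then yields that $S$ has a right Artinian classical quotient ring $Q$ in which $S$ is a right order. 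Finally, the induced equivalence of quotient categories $\sm/\gamma \simeq S^{op}\text{-Mod}/\gamma_S$ (justified by Proposition \ref{ctorgamma} applied on both sides, together with the preservation of torsion theories under equivalences observed in the proof of Proposition \ref{rreq}) sends $\mathcal{Q}_\gamma(M)$ to $\mathcal{Q}_{\gamma_S}(S) \cong Q$, so $T = \End_R(\mathcal{Q}_\gamma(M)) \cong \End_S(Q) \cong Q$. Thus $T$ is Artinian and $S$ is a right order in $T$.

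For the direction (b) $\Rightarrow$ (a), I would reverse the argument. Assuming that $T$ is Artinian and $S$ is a right order in $T$, the same equivalence identifies $Q$ with $T$, so $S$ has an Artinian classical right quotient ring. Beachy's theorem applied to $S$ then gives that $S$ has finite reduced rank as a ring and that $\mathcal{C}_S(N(S)) \subseteq \mathcal{C}_S(0)$. Transferring back via Corollary \ref{rrendo} and the prime-correspondence yields that $M$ has finite reduced rank in $\sm$ and $\mathcal{C}(\mathfrak{L}(M)) \subseteq \mathcal{C}(0)$. For completeness I would also record, as an a priori check in this direction, that elements of $\mathcal{C}(\mathfrak{L}(M))$ become units in $T$ by Lemma \ref{mudimtf} together with the argument that $\mathcal{Q}_\gamma(c)$ is simultaneously monic (since $c$ is injective) and epic (since $\mathcal{Q}_\gamma(M/c(M)) = 0$).

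The main obstacle is in the bookkeeping of the transfers, not in any conceptual step: one must verify that the Morita equivalence $\sm \simeq S^{op}\text{-Mod}$ identifies $\gamma$ with $\gamma_S$ (so that $\mathcal{Q}_\gamma(M)$ goes to $Q$) and the canonical ring homomorphism $S \to T$ with the canonical inclusion $S \hookrightarrow Q$. The compatibility of $\gamma$ with $\gamma_S$ is where Proposition \ref{ctorgamma} is essential, since it presents both torsion theories via the multiplicative sets $\mathcal{C}(\mathfrak{L}(M))$ and $\mathcal{C}_S(N(S))$ respectively, which correspond under the equivalence. Granted these compatibilities, both implications reduce cleanly to Beachy's ring-theoretic theorem.
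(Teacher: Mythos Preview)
Your approach has a genuine gap at its foundation: the equivalence $\sm\simeq S^{op}\text{-Mod}$, and with it Corollary~\ref{rrendo}, requires $M$ to be \emph{finitely generated}, while Theorem~\ref{smallgen} assumes only that $M$ is a progenerator of $\sm$. The paper is explicit about this distinction---Corollary~\ref{rrendo} adds ``finitely generated'' to its hypothesis precisely because a projective generator that is not finitely generated need not yield a Morita equivalence (the functor $\Hom_R(M,-)$ fails to preserve coproducts). So your wholesale transfer to Beachy's ring-theoretic theorem is unavailable at the level of generality claimed.

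There is a secondary circularity in your (b)$\Rightarrow$(a) argument: your ``a priori check'' invokes Lemma~\ref{mudimtf}, whose hypotheses are exactly the two statements in (a) that you are trying to prove. Similarly, you lean on Proposition~\ref{ctorgamma} to identify torsion theories, but that proposition also assumes finite reduced rank. The paper's proof avoids both problems by working directly in the quotient category rather than passing through $S^{op}\text{-Mod}$. For (a)$\Rightarrow$(b) it constructs the embedding $S\hookrightarrow T$ and the Ore factorization $q=(qc)c^{-1}$ by hand using Lemma~\ref{mudimtf} and Proposition~\ref{ctorgamma}, and obtains Artinianity of $T$ from the equivalence $\sm/\gamma\simeq\text{Mod-}T$ (valid because $\mathcal{Q}_\gamma(M)$ is a finite-length progenerator of $\sm/\gamma$, with no finiteness assumption on $M$ itself). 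For (b)$\Rightarrow$(a) it first deduces that $\mathcal{Q}_\gamma(M)$ is Artinian in $\sm/\gamma$ via an equivalence with a quotient of $\text{Mod-}T$, then proves $\gamma(M)=0$ from injectivity of $S\to T$, identifies $\mathfrak{L}(S)=\Hom_R(M,\mathfrak{L}(M))$ directly, and only at the last step appeals to the ring-theoretic Small condition on $S$ to conclude that each $c\in\mathcal{C}(\mathfrak{L}(M))$ is regular in $S$ and hence injective on $M$.
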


\begin{proof}
	(a)$\Rightarrow$(b) 
	Consider $\mathcal{Q}_\gamma(M)$ as an $R$-module. 
	We claim that $S$ is an order in $T=\End_R(\mathcal{Q}_\gamma(M))$. 
	It follows from Lemma \ref{mudimtf} 
	that $\varphi_M:M\to \mathcal{Q}_\gamma(M)$ is a monomorphims. Then, every $f\in S$, has a unique extension $\overline{f}\in T$, by definition of the module of quotients. 
%	such that $f(M)=\overline{f}(M)$. 
%	Suppose that $f,g\in S$ are such that $\overline{f}=\overline{g}$. 
%	Then $M\subseteq \Ker(\overline{f}-\overline{g})$, 
%	and therefore there is an epimorphism 
%	$\mathcal{Q}_\gamma(M)/M\to \mathcal{Q}_\gamma(M)/
%	\Ker(\overline{f}-\overline{g})$. 
%	On the other hand, there is an embedding 
%	$\mathcal{Q}_\gamma(M)/\Ker(\overline{f}-\overline{g})
%	\to \mathcal{Q}_\gamma(M)$, 
%	so $\mathcal{Q}_\gamma(M)/\Ker(\overline{f}-\overline{g})$ 
%	is $\gamma$-torsion, 
%	while $\mathcal{Q}_\gamma(M)$ is $\gamma$-torsionfree, 
%	and therefore $\overline{f}=\overline{g}$. 
%	It follows that 
	Hence we can consider $S$ to be a subring of $T$. 
	
	Let $q\in T$ and consider $A=q^{-1}(M)\cap M$. 
	Then $M/A\cong\frac{q^{-1}(M)+M}{q^{-1}(M)}$,
	so $M/A$  can be embedded in $\mathcal{Q}_\gamma(M)/M$,
	which implies that $M/A$ is $\gamma$-torsion. 
	Let $\pi:M\to M/A$ be the canonical projection. 
	Then there exists $c\in\mathcal{C}(\mathfrak{L}(M))$ such that $\pi c=0$, 
	and this implies that $c(M)\subseteq A$. 
	By Lemma \ref{mudimtf}, $c\in T$ is invertible, 
	and then $q=(qc)c^{-1}=tc^{-1}$. 
	This shows that $S$ is a right order in $T$. 
	
	To see that $T$ is an Artinian ring, 
	note that $\mathcal{Q}_\gamma(M)$ 
	is a projective generator of finite length in $\sm/\gamma$. 
	This implies that $\Hom_{\sm/\gamma}(\mathcal{Q}_\gamma(M),\_)$ 
	gives an equivalence between $\sm/\gamma$ 
	and the category Mod-$T$ \cite[Ch. X, Theorem 4.1]{stenstromrings}.  
	Therefore 
	$T=\End_R(\mathcal{Q}_\gamma(M))=\End_{\sm/\gamma}(\mathcal{Q}_\gamma(M))$ 
	is an Artinian ring.
	
	(b)$\Rightarrow$(a) 
	Since $M$ is a projective generator in $\sm$, 
	the localization $\mathcal{Q}_\gamma(M)$ 
	is a projective generator in $\sm/\gamma$. 
	Hence there is a quotient category Mod-$T/\tau$ 
	such that the functor 
	$\Hom_{\sm/\gamma}(\mathcal{Q}_\gamma(M),\_)$ 
	gives an equivalence \cite[Ch. X, Theorem 4.1]{stenstromrings},
	as shown in the following diagram.
	
	\[\xymatrix{ & \text{Mod-}T/\tau\ar@{_(-{>}}[rd]^i & \\ 
		\sm/\gamma\ar[rr]^-{\Hom_{\sm/\gamma}(\mathcal{Q}_\gamma(M),\_)}
		\ar[ur]_\cong^{\mathcal{H}} & & \text{Mod-}T \\ 
		\sm\ar[u]^{\mathcal{Q}_\gamma}%\ar[rr]_-{\Hom_{\sm/\gamma}(M,\_)} 
		& & \text{Mod-}S\ar[u]_{\mathcal{F}} }\]
	
	We obtain the functor $\mathcal{F}$ 
	from the fact that $S$ is a right order in $T$. 
	The functor $\mathcal{H}$ 
	is the corestriction of the functor 
	$\Hom_{\sm/\gamma}(\mathcal{Q}_\gamma(M),\_)$,
	which is an equivalence,
	and $\mathcal{H}(\mathcal{Q}_\gamma(M))=T$. 
	It follows that $\mathcal{Q}_\gamma(M)$ is Artinian 
	because $\mathcal{H}$ is an equivalence, 
	and this proves that $M$ has finite reduced rank.
	
	It remains to prove that 
	$\mathcal{C}(\mathfrak{L}(M))\subseteq\mathcal{C}(0)$. 
	We first show that $M$ is $\gamma$-torsionfree. 
	Note that the inclusion  of $S$ in $T$ is constructed as follows: 
	given $\varphi\in S$, 
	there exists $\widehat{\varphi}:M/\gamma(M)\to M/\gamma(M)$ 
	such that $\pi\widehat{\varphi}=\widehat{\varphi}\pi$,
	and by the  definition of $\mathcal{Q}_\gamma(M)$, 
	the mapping $\widehat{\varphi}$ can be uniquely extended 
	to a endomorphism $\overline{\varphi}\in T$.
	This leads to the following diagram.
	
	\[\xymatrix{M\ar@{-{>>}}[r]^-\pi \ar[d]_\varphi 
		& M/\gamma(M)\ar[r]\ar[d]^{\widehat{\varphi}} 
		& \mathcal{Q}_\gamma(M)\ar@{--{>}}[d]^{\overline{\varphi}} \\ 
		M\ar@{-{>>}}[r]_-\pi 
		& M/\gamma(M)\ar[r] 
		& \mathcal{Q}_\gamma(M)}\]
	
	\noindent
	Suppose that $\gamma(M)\neq 0$. 
	Since $M$ is a generator of $\sm$, 
	there exists $0\neq\varphi:M\to \gamma(M)$, 
	and hence $\overline{\varphi}=0\in T$. 
	This implies that $\varphi=0$, because the assignment
	$\varphi\mapsto\overline{\varphi}$ is injective. 
	This is a contradiction, 
	showing that $\gamma(M)=0$.
	
	Note that there is a surjective ring homomorphism 
	$\Theta:S\to \End_R(M/\mathfrak{L}(M))$ with 
	$\Ker\Theta=\Hom_R(M,\mathfrak{L}(M))$. 
	By Theorem \ref{rrt}, 
	$M/\mathfrak{L}(M)$ is a semiprime Goldie module,
	and hence $\End_R(M/\mathfrak{L}(M))$ 
	is a semiprime right Goldie ring \cite[Theorem 2.22]{maugoldie}. 
	This implies that $\Ker\Theta$ is a semiprime ideal of $S$, 
	and therefore $\mathfrak{L}(S)\subseteq\Ker\Theta$. 
	On the other hand, 
	$\mathfrak{L}(M)$ is nilpotent 
	since $M$ has finite reduced rank and $M$ is $\gamma$-torsionfree 
	(Theorem \ref{rrt}). 
	It follows that $\Hom_R(M,\mathfrak{L}(M))$ is a nilpotent ideal of $S$, 
	and thus $\Ker\Theta=\Hom_R(M,\mathfrak{L}(M))\subseteq\mathfrak{L}(S)$, 
	so $\Ker\Theta=\mathfrak{L}(S)$. 
	Now let $c\in\mathcal{C}(\mathfrak{L}(M))$. 
	Then $\Theta(c)\in\End_R(M/\mathfrak{L}(M))$ is injective. Since $M/\mathfrak{L}(M)$ is a semiprime Goldie module, projective in $\sigma\left[ M/\mathfrak{L}(M)\right]$, $\Theta(c)$ is a regular element \cite[11.5]{wisbauermodules}. 
	That is, $c$ is a regular element modulo $\mathfrak{L}(S)$, 
	and then $c$ is regular in $S$ 
	since $S$ is a right order in an Artinian ring. 
	Suppose that $\Ker c\neq 0$. 
	Since $M$ is a generator, 
	there exists $0\neq f\in\Hom_R(M,\Ker c)$, 
	which implies that $cf=0$. 
	Then $f=0$ because $c$ is regular, 
	and thus $\Ker c=0$, 
	proving that $\mathcal{C}(\mathfrak{L}(M))\subseteq\mathcal{C}(0)$.
\end{proof}

\begin{cor}
	Let $R$ be a ring, $e\in R$ a semicentral idempotent. If $R$ is an order in an Artinian ring, then so are the rings $\Mat_n(R)$ and $eRe$.
\end{cor}

\begin{proof}
	Suppose $R$ is a left order in an Artinian ring. Then $R$ has finite reduced rank on the left and $\mathcal{C}(\mathfrak{L}(R))\subseteq\mathcal{C}(0)$ by Theorem \ref{smallgen}. It follows from Proposition \ref{rrs} that $R^{(n)}$ has finite reduced rank for every $n>0$. On the other hand $\mathfrak{L}(R^{(n)})=\mathfrak{L}(R)^{(n)}$. Hence $\mathcal{C}(\mathfrak{L}(R^{(n)}))\subseteq\mathcal{C}(0)$. Thus $\End_R(R^{(n)})\cong\Mat_n(R)$ is a left order in an Artinian ring by Theorem \ref{smallgen}.
	
	Now, let $e^2=e\in R$ be a semicentral idempotent. By Corollary \ref{corner}, $Re$ has finite reduced rank. We have that $\mathfrak{L}(R)=\mathfrak{L}(Re)\oplus\mathfrak{L}(R(1/e))$. Let $c\in\mathfrak{L}(Re)$, that is, $\bar{c}:Re/\mathfrak{L}(Re)\to Re/\mathfrak{L}(Re)$ is a monomorphism. Thus 
	\[\bar{c}\oplus\bar{1}:\frac{Re}{\mathfrak{L}(Re)}\oplus \frac{R(1-e)}{\mathfrak{L}(R(1-e))}\rightarrow \frac{Re}{\mathfrak{L}(Re)}\oplus \frac{R(1-e)}{\mathfrak{L}(R(1-e))}\]
	is a monomorphism and $\frac{Re}{\mathfrak{L}(Re)}\oplus \frac{R(1-e)}{\mathfrak{L}(R(1-e))}=R/\mathfrak{L}(R)$. By Theorem \ref{smallgen}, $c\oplus 1$ is a monomorphism which implies that so is $c$. Thus, $\mathcal{C}(\mathfrak{L}(Re))\subseteq\mathcal{C}(0)$. By Theorem \ref{smallgen}, $\End_R(Re)=eRe$ is a left order in an Artinian ring. 
\end{proof}

Recall that an $R$-module $M$ is said to be \emph{Rickart} if $\Ker f$ is a direct summand of $M$ for all $f\in\End_R(M)$ \cite{leerickart}. 

\begin{prop}\label{ricgold}
	Let $M$ be a projective generator of $\sm$ with $S=\End_R(M)$, 
	and let $\gamma$ be the hereditary torsion theory in $\sm$ 
	cogenerated by $E^{[M]}(M/\mathfrak{L}(M))$. 
	If $M$ is a Rickart module with Krull dimension, then $T=\End_R(\mathcal{Q}_\gamma(M))$ is Artinian and $S$ is a right order in $T$.
\end{prop}

	\begin{proof}
		By Theorems \ref{Knil} and \ref{smallgen}, we just have to see that $\mathcal{C}(\mathfrak{L}(M))\subseteq\mathcal{C}(0)$. Let $c\in\mathcal{C}(\mathfrak{L}(M))$. This implies that $\Ker c\subseteq\mathfrak{L}(M)$. Since $M$ is Rickart, $\Ker c$ is a direct summand of $M$ and therefore ${\Ker c}_M\Ker c=\Ker c$. By \cite[Corollary 5.5]{mauprimerad}, $\mathfrak{L}(M)$ is nilpotent. Thus $\Ker c=0$. 
	\end{proof}

\begin{cor}
	Let $R$ be a left Rickart ring with Krull dimension, then $R$ is a left order in an Artinian ring.
\end{cor}

%%%%% Para agregar a las referencias sin citar %%%%%%%%%%%%
%%\nocite{lopezkasch}
%%%% Bibliografia %%%%%%%%%%%
\bibliographystyle{acm}
\bibliography{biblio}

\end{document}